\documentclass[12pt]{article}

\usepackage{fullpage}
\usepackage[title,titletoc]{appendix}
\usepackage[authoryear,round]{natbib}
\usepackage{caption}
\captionsetup{width=\linewidth}
\usepackage[hang,flushmargin]{footmisc}
\usepackage[all]{xy}
\usepackage{rotating}
\usepackage{amsmath}
\usepackage{amssymb}
\usepackage{amsthm}
\usepackage{amscd}
\usepackage{amsfonts}

\usepackage{color}
\usepackage{xcolor}
\usepackage{amsmath,bm}
\usepackage{enumerate}
\usepackage{booktabs}
\usepackage{amssymb}

\usepackage{bbm}

\DeclareMathOperator{\sign}{sign}

\newcommand{\rr}{\mathbbm{R}}

\newcommand{\NN}{\mathcal{N}}
\newcommand{\zbar}{\overline{z}}
\def\ignore#1{}
\newcommand{\aipp}{a preserver}
\newtheorem{proposition}{Proposition}
\newtheorem{theorem}{Theorem}
\newtheorem{lemma}{Lemma}
\newtheorem{remark}{Remark}
\newtheorem{definition}{Definition}

\date{}
\begin{document}

\title {Continuous approximations for the fixation probability of the Moran processes on star graphs}

\author{Poly H. da Silva\thanks{Department of Statistics, Columbia University, USA, email: polyhdasilva@gmail.com}, Max O. Souza\thanks{\raggedright Instituto de Matemática e Estatística, Universidade Federal Fluminense, Brazil, \mbox{email: maxsouza@id.uff.br}}}

\maketitle

\abstract{We consider a generalized version of the birth-death (BD) and death-birth (DB) processes introduced by Kaveh, Komarova, and Kohandel (2015), in which two constant fitnesses, one for birth and the other for death, describe the selection mechanism of the population. Rather than constant fitnesses, in this paper we consider more general 
frequency-dependent fitness functions (allowing any smooth functions) under the weak-selection regime. A particular case arises in evolutionary games on graphs, where the fitness functions are linear combinations of the frequencies of types. For a large population structured as a star graph,  we provide approximations for the fixation probability which are solutions of certain ODEs (or systems of ODEs).  For the DB case, we prove that our approximation has an error of order $1/N$, where $N$ is the size of the population.

The general BD and DB processes contain, as special cases, the BD-* and DB-* (where * can be either B or D) processes described in Hadjichrysanthou, Broom, and Rycht{\'a}{\v{r}} (2011) --- this class includes many examples of update rules used in the literature. Our analysis shows how the star graph may act as an amplifier, suppressor, or remains isothermal depending on the scaling of the initial mutant placement. We identify an analytical threshold for this transition and illustrate it through applications to evolutionary games, which further highlight asymmetric structural effects across different game types. Numerical examples show that our fixation probability approximations remain accurate even for moderate population sizes and across a wide range of frequency-dependent fitness functions, extending well beyond previously studied linear cases derived from evolutionary games, or constant fitness scenarios.}

\vspace{0.5em}
\noindent\textbf{Keywords:}{ Evolutionary dynamics, Birth-death and Death-birth processes, Fixation probability, Continuous approximations.}

\maketitle

\section{Introduction}\label{sec1}
\subsection{Background}
\label{ssec:back}

The use of stochastic processes to understand the evolutionary dynamics goes back at least to Galton \citep{watson1875probability}, who devised a process to model the extinction of aristocratic family names, and  that today bears his name. Early in the twentieth century, Wright and Fisher introduced a stochastic process that was a watershed in the study of mathematical population genetics --- now known as the Wright-Fisher (WF) process \citep{Wright1,Wright2}. 

 Later on, in the early sixties, Moran devised a simplified process as alternative to the WF process: a birth-death process now known as the Moran process \citep{moranp}. The Moran process considers a finite well-mixed population where each individual can interact with every other individual, with two types (or traits), and such that mutations are not considered. 
 As a result, two homogeneous states turn out to be  absorbing, i.e., the dynamics eventually reaches one of them --- when this happens we say that the corresponding type has fixed. This can also be considered as a special case of the Kimura class of processes studied in \cite{chalubsouza}, and for processes in this class an important issue  is the computation of the fixation probability,  
 i.e. the probability that a given type will fix conditional on the current state of the population. 
 
Although the original Moran process only considers constant fitnesses, the more recent versions involve frequency dependent fitnesses --- cf. \cite{nowak1,ChalubSouza09b,AssafMobilia2010,MobiliaAssaf2010,NowakSasakiTaylorFudenberg,TaylorFudenbergSasakiNowak,Traulsenetal2006b}. 
The classical Moran process and other models that assume well-mixed populations can be seen as an interacting population dynamics on complete graphs, in which any pair of individuals can be in interaction with each other. However when the interactions are restricted to certain pairs of individuals, this can be generalized to the spatial model, where the interaction can only occur between two neighbours in a given graph.
 
 The use of well-mixed populations has been studied as early as the results in \cite{erdos1960evolution,nagylaki1980numerical,barabasi1999emergence}. In 2005,  \cite{nowak1} brought the study of population dynamics on graphs to the mainstream of evolutionary game theory.  
 
 The computation of the fixation probability in this framework is more involved though --- \cite{nowak1} identified the class of isothermal graphs, which are a subset of regular graphs for which  the fixation probability can be calculated in a similar way to the complete graph. Due to this difficulty, \cite{nowak1} studied the fixation probability when there is only one mutant in a star graph (invasion probability), and found an approximation when the fitness is constant and the number of leaves is large.  \cite{br2008} found the exact formula for the invasion probability in a star graph and its asymptotics revisiting the asymptotic results in \cite{nowak1}. The latter computation was amended in \cite{chalub2014asymptotic}. The exact formula for the fixation probability for any initial state was later given in \cite{monk2014martingales}.

There are two versions of the Moran process on graphs, differing by the order in which birth and death events occur.  In the Birth-Death (BD) process, an individual is first chosen with a probability proportional to its birth fitness to reproduce. Subsequently, an individual among its neighbors is chosen with a probability proportional to its death fitness (death propensity) to die, and then be replaced by the new offspring. On the other hand, in the Death-Birth (DB) process, an individual is first chosen with a probability proportional to its death fitness to die, then an individual among its neighbors is selected with a probability proportional to its birth fitness to reproduce, and its offspring replace the individual that died. For different updating mechanism versions,  see \cite{hcbr}.

Population structure can introduce diverse phenomena in evolution. Comparing an evolutionary process on a given graph (and its invasion probabilities) with the corresponding process on the complete graph under a constant fitness function, \cite{nowak1} introduced the notions of accelerator and suppressor graphs. In this context, \citeauthor{nowak1} proved that the star graph with a BD update mechanism is an accelerator of evolution, meaning that it amplifies the effects of natural selection. On the other hand, \cite{kkk} and \cite{hindersin2015most} showed that DB processes on star graphs act as suppressors of evolution.
Similarly, \cite{pattni2015evolutionary} studied several classes of graphs and updating rules that have the same probability of fixation as the standard Moran process in a complete graph, when fitness is constant. In particular, these processes have the same invasion probability for each $N$.

Existing literature predominantly focuses on models for which birth and death fitnesses are constant or linear functions, as in classical evolutionary game theory. 
However, in this paper,  instead of constant fitness values as seen in \cite{kkk} and \cite{pattni2015evolutionary}, or  linear functions, we consider much more general frequency-dependent fitness functions for both birth and death. These functions can be any smooth functions of the frequency of types. For a large population,  our study provides continuous approximations for the fixation probability in star graphs with general birth and death fitnesses under the weak-selection regime. For the DB process, we prove that our continuous approximation for the fixation probability is of order $1/N$, where $N$ is the size of the population. It is worth noting that although continuous approximations are provided, we do not consider an infinite population limit here. In this vein,
our approach is similar to the one in   \cite{chalubsouza16}, however, our techniques diverge significantly.

We also investigate how population structure affects the invasion dynamics of rare mutants, focusing on the behavior of the ratio between fixation probabilities in the star graph and in the complete graph. Our analysis shows that the star can act as an amplifier, suppressor, or behave isothermally, depending on how the initial mutant is distributed—whether placed deterministically at the center, uniformly at random, or according to a vanishing probability scaling. For the DB process, we derive an explicit analytical threshold that separates suppressor and amplifier regimes, and we demonstrate how this threshold depends on the selection intensity and the choice of fitness functions. These findings highlight a subtle interplay between selection, structure, and initial conditions that had not been fully captured in prior work.

In addition, we present a series of numerical examples that confirm the accuracy and flexibility of our continuous approximations for fixation probabilities, even for moderate population sizes. These examples showcase the broader applicability of our approach beyond previously studied linear or constant-fitness settings. Together, our results contribute to a more complete understanding of evolutionary dynamics in structured populations under general fitness functions.

\subsection{Outline}
The paper is organized as follows. Section~\ref{sec:EDG}  presents the preliminaries and formulates the problem. In Section~\ref{sec:odecandidate}, we derive continuous approximations for the fixation probability for both the BD and DB processes, considering general functions for birth and death fitnesses under weak-selection regime. For the DB process, we establish in Theorem~\ref{rDB} that the approximation error is of order $1/N$, with the proof provided in Appendix~\ref{secA1}. For the BD process, we obtain an approximation closely resembling the BD process on a complete graph, as shown in~\cite{chalubsouza16}. In Section~\ref{sec:2player}, we examine the asymptotic qualitative behavior of a structured population as a star graph when fitness is a linear function defined by a payoff matrix. In Section~\ref{sec:IP}, we analyze the invasion probability—the likelihood that a single mutant eventually fixes in the population. Finally, Section~\ref{sec:NE} presents numerical examples illustrating that the approximations derived in Section~\ref{sec:odecandidate} for both DB and BD processes closely match the fixation probability.

\subsection{On the notations used}
In the following, we write $\bm{M}=[m_{ij}]$ to denote  a matrix in $\rr^{n\times n}$, and $\bm{M}^t$ for the transpose of $\bm{M}$. We also write $\vert\cdot\vert$ for the vector infinity norm in $\rr^n$ and $\Vert \cdot \Vert$ to denote the corresponding  matrix norm,  
which is  given by $\Vert \bm{M} \Vert=\max_{1\leq i\leq n}\sum_{j=1}^{n} \vert m_{ij}\vert$, i.e., the maximum absolute row sum of the matrix. It will be convenient to regard a vector $\bm{v}$ as a matrix in $\rr^{n\times 1}$, whose entries will be written as  $v_{i1}:=v_i$. Also, we denote by $\bm{I}_n$ the $n\times n$ identity matrix. We drop the subscript when it is clear.

\section{Evolutionary dynamics on graphs}\label{sec:EDG}

\subsection{Preliminaries}

Consider a finite population of $N$ individuals divided into two types (traits), $A$, the wild-type or resident, and $B$, the mutant. Assume that each sub-population is homogeneous, i.e. there is no advantage of any particular individual with respect to others with the same type. The number of individuals is always assumed to be constant. The population is structured, which means that each individual may interact, in different ways, with other individuals in the population according to the geometric structure of a graph that represents the interaction pattern of a given population.

More precisely, let $G=(V, E)$ be a finite, simple (without loops and parallel edges), undirected and connected graph, where $V$ is the set of vertices and $E$ is the set of edges. One unique individual lives at each vertex of the graph $G$, and the vertices $i$ and $j$ are connected if an interaction is possible between the individuals living at them. In other words, each individual only interacts with its neighbours. A special case of this, namely the classical Moran process, is the model for which interactions are allowed for every pair of individuals, i.e., the underlying graph is complete. For simplicity, we call an individual sited at the vertex $i$ the  individual $i$. 

Let $\mathbb R_+$ be the set of all strictly positive real numbers, and $C^\infty([0,1], \mathbb R_+)$ be the space of all smooth (infinitely times differentiable) functions from the closed interval $[0,1]$ to $\mathbb R_+$. Let $\varphi_1^A,\varphi_2^A:[0,1]\longrightarrow \rr_+$ be  frequency dependent fitness functions ($\varphi_1^A, \varphi_2^A\in C^\infty([0,1],\mathbb R_+)$) such that $\varphi_1^A(x)$ and $\varphi_2^A(x)$  represent the birth fitness and the death fitness (or death propensity) of type $A$, respectively,  when there exist $(1-x)N$ individuals of type $A$ in the population for $x\in\{0,1/N,2/N,...,1\}$. In order to balance selection and stochastic drift when the population is large, we assume that we are in the weak-selection regime. In other words, let  $\varphi_1^B,\varphi_2^B:[0,1]\longrightarrow \rr_+$ be functions in $C^\infty([0,1],\mathbb R_+)$, and for $i=1,2$, define $\rho_i:[0,1]\longrightarrow \rr$ with 
\[\varphi_i^B(x)=\varphi_i^A(x)+\frac{\rho_i(x)}{N}.\] This implies in $ \rho_i\in C^\infty([0,1],\mathbb R)$, for $i=1,2$. Note that $\varphi_1^B(x)$ and $\varphi_2^B(x)$ represent the birth fitness and the death fitness of type $B$, respectively, when there exist $xN$ individuals of type $B$ in the population for $x\in\{0,1/N,2/N,...,1\}$. In what follows, it will be convenient as a simplification device to define $\varphi_i:[0,1]\longrightarrow \rr_+$ by 
\[\varphi_i(x):=\frac{\varphi_i^B(x)}{\varphi_i^A(x)}=1+\frac{\psi_i(x)}{N},\] 
where $\psi_i(x)=\rho_i(x)/\varphi_i^A(x)$, for $i=1,2$. We assume that $\varphi_i^A,\varphi_i^B$ ($i=1,2$) are chosen such that $\psi_1, \psi_2\in C^\infty([0,1],\mathbb R)$. Then, in practice, we consider that the fitness functions for $B$ are $\varphi_1$ and $\varphi_2$, and the fitness functions for $A$ are both equal to $1$. 

\subsection{Birth-death and Death-birth processes on graphs}

A simple stochastic  process is intuitively taken as one where each population update consists of two events: the birth of a single individual and the death of a single individual. The order of these events does matter and the difference may be quite significant in non-complete graphs. We follow \cite{kkk}, and consider a BD and DB processes allowing for selection both on birth and death. This formulation contains as special cases the BD-* and DB-* (where * can be either B or D) processes described in \cite{hcbr}, which in turn include many examples of update rules used in the literature. Unlike the constant rates for birth and death as in  \cite{kkk},  this paper considers general frequency-dependent birth and death fitness functions in $C^\infty([0,1], \mathbb R_+)$.

A general BD process in a structured population is defined as the following Markov chain. At each step, an individual $i$ is selected from the population to reproduce with probability proportional to its birth fitness, and an individual among the neighbours of $i$, say $j$, is selected to die with probability proportional to its death fitness among all neighbours of $i$. Upon a selection event for $i$ and $j$, the individual $j$ dies and is replaced by an offspring of individual $i$. Given a population of $N$ individuals  structured as a graph $G=(V,E)$,  with $k$ mutants living on a subset $K\subset V$, the probability that a new mutant appears -- that is a resident individual living on $V\setminus K$ dies and be replaced by an offspring of a mutant living on $K$ in the BD mechanism --  is given by

\[\mathbbm{P}_K^{BD}(k\rightarrow k+1)=\frac{\varphi_1(\frac{k}{N})}{k\varphi_1(\frac{k}{N})+N-k}\sum\limits_{i\in K}\frac{d_i-n_i}{n_i\varphi_2(\frac{k}{N})+d_i-n_i},\]

\noindent where 
$d_i$ is the degree of vertex $i$, and $n_i=n_i(K)$ is the number of mutant neighbours of vertex $i$. Likewise, the probability that a new wild-type appears in the BD process is

\[\mathbbm{P}_K^{BD}(k\rightarrow k-1)=\frac{1}{k\varphi_1(\frac{k}{N})+N-k}\sum\limits_{i\in V\setminus K}\frac{n_i\varphi_2(\frac{k}{N})}{n_i\varphi_2(\frac{k}{N})+d_i-n_i}.\]

The mechanism of a general DB process in a structured population is the opposite of the BD process. At each step, first an individual $i$ is selected from the population to die with probability proportional to its death fitness, and then an individual among the neighbours of $i$, say $j$, is selected, to reproduce,  with probability proportional to its birth fitness among all neighbours of $i$. Upon a selective event, individual $i$ dies and is replaced by an offspring of individual $j$.  Given a population of $N$ individuals structured as a graph $G$ with $k$ mutants, the probability that a new mutant appears  in the DB process is

\[\mathbbm{P}_K^{DB}(k\rightarrow k+1)=\frac{1}{k\varphi_2(\frac{k}{N})+N-k}\sum\limits_{i\in V\setminus K}\frac{n_i\varphi_1(\frac{k}{N})}{n_i\varphi_1(\frac{k}{N})+d_i-n_i},\]

\noindent and the probability that  in the DB process a new wild-type appears is

\[\mathbbm{P}_K^{DB}(k\rightarrow k-1)=\frac{\varphi_2(\frac{k}{N})}{k\varphi_2(\frac{k}{N})+N-k}\sum\limits_{i\in K}\frac{d_i-n_i}{n_i\varphi_1(\frac{k}{N})+d_i-n_i}.\]

As we mentioned before, in the general BD and DB processes, the two homogeneous states are absorbing,  so the dynamics eventually reaches one of them and when this happens we say that the corresponding type has fixed. The fixation probability is the probability that a given type will fix given the current state of the population.  
In this paper, we 
develop a method to estimate the fixation probability of the BD and DB processes for populations on finite star graphs with general
birth and death fitnesses under the weak-selection regime.

A star graph with $N$ vertices is a connected undirected simple graph that has only one vertex of degree $N-1$, called the center, while all the other vertices, leaves, have degree one. 
Since the permutations on leaves give isomorphic graphs, the dynamics of the process can be described by the number of mutants at the leaves and the type of the individual living at the center. In the BD process, we denote by $p^{BD,N}_{1,x}$ (resp. $p^{DB,N}_{1,x}$ in the DB process)  the fixation probability of type $B$ when the initial state of the process is the star graph with a mutant, $B$, living at its center and $xN$ mutants living at the leaves. Similarly, denote by $p^{BD,N}_{2,x}$ (resp. $p^{DB,N}_{2,x}$ in the DB process) the fixation probability of $B$ when the initial state is the star graph with a wild-type, $A$, at its center and $xN$ mutants living at its leaves. In the sequel, by removing the superscripts ``BD'' and ``DB'' in a statement, an equation, etc., we mean it is true for both BD and DB processes. From the one-step analysis of the fixation probabilities, we can obtain the following $2N$ recursive equations

\begin{equation}\label{p}
\begin{array}{c}
p^{N}_{1,x}= a_x(\overline{z}) p^{N}_{2,x}+b_x(\overline{z}) p^{N}_{1,x+\zbar}\\\\
p^{N}_{2,x}=c_x(\overline{z}) p^{N}_{2,x-\zbar}+d_x(\overline{z}) p^{N}_{1,x}
\end{array}
\end{equation}

\noindent for $x\in\{0,\zbar,2\zbar,...,1-\zbar\}$ and $\overline{z}=1/N$, with boundary conditions $p^{N}_{2,0}=0$ and $p^{N}_{1,1- \zbar}=1$, where $a_x, b_x, c_x$ and $d_x$ are continuous functions on $z\in[0,\delta]$, $\delta >\zbar$ defined for the BD process as
\begin{equation}\label{coefBD}
\begin{array}{ll}
{\displaystyle a^{BD}_x(z):=}&{\displaystyle \frac{1-z+xz\psi_2(x+z)}{1+xz\psi_2(x+z)+z^2\psi_1(x+z)}},\\\\
{\displaystyle b^{BD}_x(z):=}&{\displaystyle \frac{z+z^2\psi_1(x+z)}{1+xz\psi_2(x+z)+z^2\psi_1(x+z)}},\\\\
{\displaystyle c^{BD}_x(z):=}&{\displaystyle\frac{z(1+z\psi_2(x))}{(1+z\psi_1(x))(1-z+xz\psi_2(x))+z(1+z\psi_2(x))}},\\\\
{\displaystyle d^{BD}_x(z):=}&{\displaystyle\frac{(1+z\psi_1(x))(1-z+xz\psi_2(x))}{(1+z\psi_1(x))(1-z+xz\psi_2(x))+z(1+z\psi_2(x))}},
\end{array}
\end{equation} 
\noindent and for DB process as
\begin{equation} \label{coefDB}
\begin{array}{ll}
{\displaystyle a^{DB}_x(z):=}&{\displaystyle\frac{z(1+z\psi_2(x+z))}{1+xz\psi_1(x+z)+z^2\psi_2(x+z))}},\\\\
{\displaystyle b^{DB}_x(z):=}&{\displaystyle\frac{1-z+xz\psi_1(x+z)}{1+xz\psi_1(x+z)+z^2\psi_2(x+z))}},\\\\
{\displaystyle c^{DB}_x(z):=}&{\displaystyle\frac{(1+z\psi_2(x))(1-z+xz\psi_1(x))}{(1+z\psi_2(x))(1-z+xz\psi_1(x))+z(1+z\psi_1(x))}},  \\\\
{\displaystyle d^{DB}_x(z):=}&{\displaystyle\frac{z(1+z\psi_1(x))}{(1+z\psi_2(x))(1-z+xz\psi_1(x))+z(1+z\psi_1(x))}.}
\end{array}
\end{equation}

 Note that in the original BD and DB processes, there is a positive probability of not jumping at each step, i.e.  the individuals selected for birth and death may be of the same type, resulting in no change. However if the process is not in the absorbing states, then with probability one, it eventually jumps to another state in finite time. So by considering transition probabilities conditioned on jumping to another state, we can construct a new Markov chain that leaves the current state with probability one at each step. More precisely, the transition probability of jumping to a neighbour for the new reduced Markov chain is the conditional probability of jumping to that neighbour provided that a jump occurs. It is clear that the fixation probabilities of mutants for the original BD and DB processes are the same as those in the reduced Markov chains. We denote by $\bm{L}$ the conditional transition probability matrix on the star graph with dimension ${2N\times2N}$. For each fixed $N$, $\bm{L}$ is defined as follows
\[L_{ij}=\left\lbrace \begin{array}{ll}
1, & \text{ if }   i=j=1 \text{ or } i=j=2N; \\
c_{(i-1)\zbar}(\zbar), & \text{ if } j= i-1 \text{ and } 2\leq i\leq N; \\
d_{(i-1)\zbar}(\zbar), & \text{ if } j= N+i \text{ and } 2\leq i\leq N; \\
a_{(i-N-1)\zbar}(\zbar), & \text{ if } j=i-N \text{ and } N+1\leq i\leq 2N-1; \\
b_{(i-N-1)\zbar}(\zbar), & \text{ if } j=i+1 \text{ and } N+1\leq i\leq 2N-1; \\
0, &\text{otherwise.}
\end{array}\right.\]
 \noindent where $a_x$, $b_x$, $c_x$ and $d_x$ are defined in (\ref{coefBD}) for the BD process and in (\ref{coefDB}) for the DB process. Also, we define $\bm{M}:= \bm{L} -\bm{ I}$ and denote by \[\bm{F}=\left[ \begin{array}{c}
 p^{N}_{2,0}\\
 p^{N}_{2,\zbar}\\
\vdots\\
 p^{N}_{2,1-\zbar}\\
 p^{N}_{1,0}\\
 p^{N}_{1,\zbar}\\
\vdots\\
 p^{N}_{1,1-\zbar}
\end{array}\right]   \] 
the fixation probability vector. Note that, $\bm{L F} =\bm{F} $ and so $\bm{M  F} =\bm{0}$.

We are now ready to find continuous approximation candidates for the fixation probabilities for both the BD and DB processes on a star graph under the weak-selection regime. In the next section, we see that although the method to derive our continuous approximation of the fixation probability is quite similar for both the BD and DB processes, the resulting approximations have distinct forms. 
For the DB process, we establish in Theorem~\ref{rDB} that the error of the approximation is of order $1/N$. As for the BD process, we find an approximation very similar to the BD process on a complete graph, as given in~\cite{chalubsouza16}.

\section{Finding continuous approximation candidates}\label{sec:odecandidate}

In order to find continuous approximation candidates for the fixation probability for the BD and DB processes on a star graph,  we first suppose that there exist smooth functions $q_1,q_2:[0,1]\times [0,\delta]\longrightarrow [0,1]$ such that
\begin{equation}\label{recureq}
\left\lbrace \begin{array}{c}
a_x(z) q_2(x,z)+b_x(z) q_1(x+z,z)-q_1(x,z)=0\\\\
c_x(z) q_2(x-z,z)+d_x(z) q_1(x,z)-q_2(x,z)=0
\end{array}\right. 
\end{equation} 

Since $a_x, b_x,c_x$ and $d_x$ are smooth functions, for a sufficiently small $z$,  we can use the Taylor series at point $(x,0)$ and rewrite each equation in (\ref{recureq}) as a Taylor polynomial.
As the equations in (\ref{recureq}) are equal to zero, the coefficient of each order of the Taylor polynomial is equal to zero. We analyze the coefficients of each equation and their relations in order to find smooth functions such that, when they are evaluated on the grid $\{0,\zbar,2\zbar,...,1-\zbar\}$,  they provide an approximation of the fixation probability vector. For each process, BD and DB, the analysis has its peculiarities which will be explained in the following subsections in detail.

\subsection{The continuous approximation candidate of the fixation probability for a DB process}\label{odeDB}

In the DB process, the constant term of the Taylor series of both equations in (\ref{recureq}) vanishes.  
Letting $f_1(x):=q_1(x,0)$ and $f_2(x):=q_2(x,0)$, then the system of coefficients of first-order is equivalent to 
\begin{equation}\label{coefone2}
\left\lbrace \begin{array}{l}
{\displaystyle f_2(x)-f_1(x)+f_1'(x)=0}\\\\
{\displaystyle f_1(x)-f_2(x)-f_2'(x)=0}
\end{array}\right. 
\end{equation}

Thus, the solution for this system, considering the initial conditions $f_1(1-\zbar)=1$ and $f_2(0)=0$, is
\begin{equation}\label{solDBfirst}
\left\lbrace \begin{array}{l}
{\displaystyle f_1(x)=\frac{x+1}{2-\zbar}}\\\\
{\displaystyle f_2(x)=\frac{x}{2-\zbar}}
\end{array}\right. 
\end{equation}

Now, using (\ref{solDBfirst}) in the coefficients of second-order and letting $g_1(x):=\frac{\partial q_1}{\partial z}(x,0)$ and $g_2(x):=\frac{\partial q_2}{\partial z}(x,0)$, we obtain
\begin{equation}\label{coeftwo2}
\left\lbrace\begin{array}{l}
{\displaystyle g_2(x)+g_1'(x)-g_1(x)=\frac{1-x \psi_1(x)+\psi_2(x)}{2-\zbar}}\\\\
{\displaystyle g_1(x)-g_2'(x)-g_2(x)=\frac{(x-1)\psi_1(x)+\psi_2(x)-1}{2-\zbar}}
\end{array}\right. 
\end{equation}
The solution for (\ref{coeftwo2}), with initial conditions $g_1(1-\zbar)=0$ and $g_2(0)=0$, is
\begin{small}
\begin{align*}
g_1(x)&:=\frac{(1+x)(1-\zbar)}{(2-\zbar)^2}\int_0^{1-\zbar} 1+\psi_1(k)-(1+2k)\psi_2(k)dk\\\\
&-\frac{x}{(2-\zbar)}\int_{0}^x 1+\psi_1(k)-(1+2k)\psi_2(k)dk-\frac{1+x}{2-\zbar}\int_x^{1-\zbar} 1+(1-2k)\psi_2(k)dk 
\end{align*} 
\end{small}and 
\begin{small}
\begin{align*}
g_2(x)&:=\frac{(1-\zbar)x}{(2-\zbar)^2}\int_0^{1-\zbar} 1+\psi_1(k)-(1+2k)\psi_2(k)dk\\\\ 
&- \frac{x}{2-\zbar}\int_x^{1-\zbar}1+(1-2k)\psi_2(k)dk+ \frac{1-x}{2-\zbar}\int_0^x 1+\psi_1(k)-(1+2k)\psi_2(k)dk.
\end{align*}
\end{small}

Let $\bm{\overline{F}}^{DB}$  be the vector such that $\overline{F}^{DB}_i=f_2((i-1)\zbar)+\zbar g_2((i-1)\zbar)$ for $1\leq i \leq N$ and $\overline{F}^{DB}_i=f_1((i-N-1)\zbar)+\zbar g_1((i-N-1)\zbar)$ for $N+1\leq i \leq 2N$. 

We are now ready to state the main theorem of this paper whose proof is given in Appendix~\ref{secA1}.
\begin{theorem}\label{rDB}
Let $\bm{\overline{F}}^{DB}$ be a vector defined as above, then
\[\left\Vert \bm{F}^{DB}-\bm{\overline{F}}^{DB}\right\Vert\leq C\zbar,\] for a constant $C$.
\end{theorem}

\begin{remark}
Note that, for general frequency–dependent fitness, the
discrete fixation vector is unlikely to be available in closed form. On the other hand, its numerical computation requires the solution of
 a system with $2N$ equations, which is impractical for large $N$ even in the case of  constant fitness. Theorem~\ref{rDB} shows that the solution of the continuous ODE system uniformly approximates the entire discrete fixation vector with an $O(1/N)$ error. Consequently, fixation probabilities can be computed from the explicit ODE solution instead of solving the $2N$-equation system. Thus, Theorem~\ref{rDB} provides a rigorously controlled $O(1/N)$-accurate bridge from the discrete model to a solvable continuous ODE formulation in the general frequency-dependent setting.
\end{remark}

\subsection{The continuous approximation candidate of the fixation probability for a BD process}\label{odeBD}
In the BD process, the constant term of the Taylor series for (\ref{recureq}) are
\begin{equation}
\left\lbrace \begin{array}{l}
{\displaystyle -q_1(x,0)+q_2(x,0)=0}\\\\
{\displaystyle q_1(x,0)-q_2(x,0)=0}
\end{array}\right. 
\end{equation}
Therefore, $q_1(x,0)=q_2(x,0)$. Let $f(x):=q_1(x,0)$. Replacing $q_1(x,0)$ and $q_2(x,0)$ by $f(x)$ in coefficients of first-order, we obtain
\begin{equation}
\left\lbrace \begin{array}{l}
{\displaystyle -\frac{\partial q_1}{\partial z}(x,0)+\frac{\partial q_2}{\partial z}(x,0)=0}\\\\
{\displaystyle \frac{\partial q_1}{\partial z}(x,0)-\frac{\partial q_2}{\partial z}(x,0)=0}
\end{array}\right. 
\end{equation}

Thus, $\frac{\partial q_1}{\partial z}(x,0)=\frac{\partial q_2}{\partial z}(x,0)=:g(x)$,  and so the coefficients of second-order are given by
\begin{equation}\label{BDsec}
\left\lbrace \begin{array}{l}
{\displaystyle \frac{1}{2}\left(2f'(x)-\frac{\partial^2 q_1}{\partial z^2}(x,0)+\frac{\partial^2 q_2}{\partial z^2}(x,0)\right)=0}\\\\
{\displaystyle \frac{1}{2}\left(-2f'(x)+\frac{\partial^2 q_1}{\partial z^2}(x,0)-\frac{\partial^2 q_2}{\partial z^2}(x,0)\right)=0}
\end{array}\right. 
\end{equation}

Recall that $h(x):=\frac{\partial^2 q_1}{\partial z^2}(x,0)$. From any of two equations in (\ref{BDsec}) we obtain that $\frac{\partial^2 q_2}{\partial z^2}(x,0)=h(x)-2f'(x)$. So,  the coefficients of third-order can be written as
\begin{small}
\begin{equation}\label{BDthird}
\left\lbrace \begin{array}{l}
{\displaystyle (1+\psi_1(x)-x\psi_2(x))f'(x)+g'(x)+\frac{1}{2}f''(x)-\frac{1}{6}\frac{\partial^3 q_1}{\partial z^3}(x,0)+\frac{1}{6}\frac{\partial^3 q_2}{\partial z^3}(x,0)=0}\\\\
{\displaystyle (\psi_1(x)\!-1+(x-1)\psi_2(x))f'(x)-g'(x)+\frac{1}{2}f''(x)+\frac{1}{6}\frac{\partial^3 q_1}{\partial z^3}(x,0)\!-\frac{1}{6}\frac{\partial^3 q_2}{\partial z^3}(x,0)=0}
\end{array}\right. 
\end{equation}
\end{small}

Summing the equations in (\ref{BDthird}), gives rise to the ODE
\begin{equation}\label{odeBD1}
(2\psi_1(x)-\psi_2(x))f'(x)+f''(x)=0,
\end{equation}
whose solution, with boundary conditions $f(0)=q_2(0,0)=0$ and ~$f(1-\zbar)=q_1(1-\zbar,0)=1$, is
\begin{equation}\label{sol1BD}
f(x)=\frac{\int_0^x e^{-\int_0^s (2\psi_1(r)-\psi_2(r))dr}ds}{\int_0^{1-\zbar} e^{-\int_0^s (2\psi_1(r)-\psi_2(r))dr}ds}.
\end{equation}

Also, letting  $k(x)=\frac{\partial^3 q_1}{\partial z^3}(x,0)$, from (\ref{BDthird}) and (\ref{odeBD1}) we have $\frac{\partial^3 q_2}{\partial z^3}(x,0)= k(x)+6f'(x)((x-1/2)\psi_2(x)-1)-6g'(x)$. We continue the analysis one step more for coefficients of fourth-order and we obtain the ODE

\begin{equation}\label{odeBD2}
\begin{array}{ll}
{\displaystyle (2\psi_1(x)-\psi_2(x))g'(x)+g''(x)}&{\displaystyle =\frac{1}{2} (1-2 x) \psi_2(x) f''(x)+}\\\\
&{\displaystyle f'(x) \left(\psi_1'(x)+\psi_1(x) (-2 x \psi_2(x)+\psi_2(x)-2)\right.}\\\\
&{\displaystyle \left. -\psi_1(x)^2-x \psi_2'(x)+x \psi_2(x)^2+\psi_2(x)\right)}.
\end{array}
\end{equation}
where the r.h.s.  is equal to
\begin{multline*}
\bar{g}(x)=\frac{e^{-\int_{0}^x 2\psi_1(r)+\psi_2(r)dr}}{\int_{0}^{1-\bar{z}}e^{-\int_{0}^s 2\psi_1(r)+\psi_2(r)dr}ds}\times\\
\left(\psi_2(x)+\frac{1}{2}\psi_2^2(x)-2\psi_1(x)-\psi_1^2(x)+\psi_1'(x)-x\psi_2'(x)\right).
\end{multline*}
So,  the solution for (\ref{odeBD2}) with initial condition $g(1-\zbar)=g(0)=0$ is
 \begin{equation*}
g(x)=\int_0^x \left( e^{-\int_0^{s}(2\psi_1(r)-\psi_2(r))dr}\right) \left(C+\int_0^{s}\bar{g}(k)e^{\int_0^{k_2}(2\psi_1(r)-\psi_2(r))dr} dk\right)ds,
\end{equation*}
 where
 
\[
C=\frac{-\int_0^{1-\zbar}\int_0^{s}\bar{g}(k)e^{\int_0^{k}(2\psi_1(r)-\psi_2(r))dr} dk ds}{\int_0^{1-\zbar} e^{-\int_{s}^1(2\psi_1(r)-\psi_2(r))dr}ds}.
\]

In the next sections, we consider the approximate fixation probability vector $\bm{\overline{F}}^{BD}$ such that $\overline{F}^{BD}_i=f((i-1)\zbar)+\zbar g((i-1)\zbar)$ for $1\leq i \leq N$ and $\overline{F}^{BD}_i=f((i-N-1)\zbar)+\zbar g((i-N-1)\zbar)$ for $N+1\leq i \leq 2N$.

\subsubsection{Comparing with the complete graph case}\label{sec:equivalence}
In the BD process,  \cite{chalubsouza16} showed that an approximation of the fixation probability for a large well-mixed population (structured as a complete graph) is equal to
\begin{equation}\label{BDcomplete}
\frac{\int_0^x e^{-\int_0^y (\psi_1(r)-\psi_2(r))dr}dy}{\int_0^1 e^{-\int_0^y (\psi_1(r)-\psi_2(r))dr}dy},
\end{equation}

\noindent and this is a solution of the ODE
\begin{equation}\label{odeC}
(\psi_1(x)-\psi_2(x))f'(x)+f''(x)=0.
\end{equation}

Note that our approximation of the fixation probability for the star graph is quite similar to (\ref{BDcomplete}); the difference is that we have a constant $2$ multiplying the function $\psi_1$. Let $w=1-x$ and $\overline{f}(w):=1-f(1-w)$ be the approximation of the fixation probability for type $A$. Then from (\ref{odeC}), in the complete graph we have
\[
(\psi_2(1-w)-\psi_1(1-w))\overline{f}'(w)+\overline{f}''(w)=0.
\]
Therefore, in the case of a complete graph,  approximating the fixation probability for type $A$ is equivalent to considering the birth fitness function of $B$ as the death fitness function of $A$, and vice versa. However, this equivalence does not hold in the star graph due to the presence of a constant $2$ multiplying $\psi_1$. In fact, in the star graph, we not only need to switch the $\psi_1(x)$ and $\psi_2(x)$ functions but also adjust them accordingly. More precisely, the function $\bar\psi_1(x)$ associated with the birth fitness of type $A$ should be considered as twice of $\psi_2(x)$ function associated with the death fitness of $B$. Similarly, the  $\bar\psi_2(x)$ function related to the death fitness of type $A$ should be half of  $\psi_1(x)$ function related to the  birth fitness of $B$. Figure~\ref{StarEquivalence} provides an example illustrating this equivalence.

\begin{figure}[h!]
\begin{center}
\includegraphics[scale=0.5]{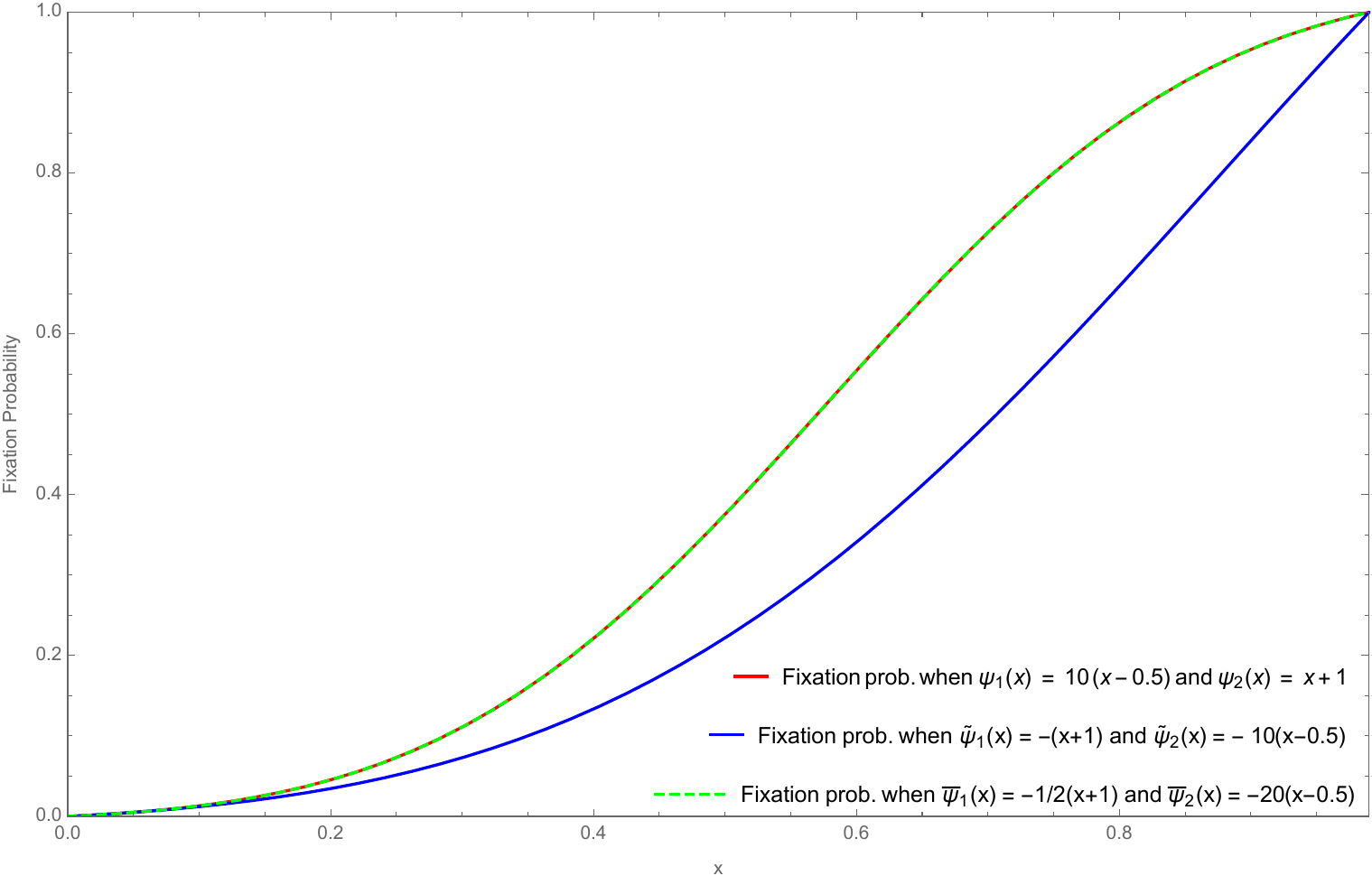}
\end{center}
\vspace*{-5mm}
\caption{ The approximate fixation probability $f(x)$, where $x$ represents the proportion of individuals of type $B$ living at the leaves, is plotted for three distinct cases in the BD process for the population size $N=100$. In the first case (shown in red), the birth fitness function of $B$ is $\varphi_1(x) = 1 + \frac{\psi_1(x)}{N}$, where $\psi_1(x) = 10(x-0.5)$, and the death fitness function of $B$ is $\varphi_2(x) = 1 + \frac{\psi_2(x)}{N}$, where $\psi_2(x) = x+1$. In the second case (shown in blue), the birth fitness function of $B$ is $\tilde\varphi_1(x) = 1 -\frac{\psi_2(x)}{N}$, and the death fitness function of $B$ is $\tilde\varphi_2(x) = 1 - \frac{\psi_1(x)}{N}$. Finally, in the third case (shown in green), which is equivalent to the first case, the birth fitness function of type $B$ is $\bar\varphi_1(x) = 1 -\frac{1}{2} \frac{\psi_2(x)}{N}$, and the death fitness function of type $B$ is $\bar\varphi_2(x) = 1 -2 \frac{\psi_1(x)}{N}$.\label{StarEquivalence}}
\end{figure}

\subsection{Alternative fitnesses parameterizations}

Note that in the DB process, if instead of $\varphi_1^A= 1$ and $\varphi_1^B= 1+\psi_1/N$, we consider $\varphi_1^A=1+\psi_{1}^A/N$ and $\varphi_1^B= 1+\psi_1^B/N$, the system of ODEs (\ref{coefone2}) does not change and (\ref{coeftwo2}) reduces to
\begin{equation}\label{coeftwodif}
\left\lbrace\begin{array}{l}
{\displaystyle g_2(x)+g_1'(x)-g_1(x)=\frac{-x (\psi_1^B(x)-\psi_1^A(x))+\psi_2^{B}(x)-\psi_2^A(x)+1}{2-\zbar}},\\\\
{\displaystyle g_1(x)-g_2'(x)-g_2(x)=\frac{(x-1)(\psi_1^B(x)- \psi_1^A(x))+\psi_2^B(x)-\psi_2^{A}-1}{2-\zbar}}.
\end{array}\right. 
\end{equation}
Therefore,  (\ref{coeftwo2}) is equivalent to (\ref{coeftwodif}) for $\psi_1=(\psi_1^B-\psi_1^A)$ and $\psi_2=(\psi_2^B-\psi_2^A)$. Now, suppose that the fitness of the individual occupying the center is different from the fitness of those occupying the leaves. More explicitly, let $\varphi_1^A(x)=1+\psi_{1}^A(x)/N$ (respectively, $\varphi_1^B(x)=1+\psi_{1}^B(x)/N$) be the birth fitness function for an individual of type $A$ (resp., $B$) when it occupies one of the leaves, and let $\varphi_1^{CA}(x)=1+\psi_{1}^{CA}(x)/N$ (resp., $\varphi_1^{CB}(x)=1+\psi_{1}^{CB}(x)/N$) when it occupies the center, in a population with $xN$ individuals of type $B$.  Similar notation can be easily defined for the death fitness functions. As a result (\ref{coeftwo2}) reduces to
\begin{equation}\label{coeftwodif2}
\left\lbrace\begin{array}{l}
{\displaystyle g_2(x)+g_1'(x)-g_1(x)=\frac{-x (\psi_1^B(x)-\psi_1^A(x))+\psi_2^{CB}(x)-\psi_2^A(x)+1}{2-\zbar}}\\\\
{\displaystyle g_1(x)-g_2'(x)-g_2(x)=\frac{(x-1)(\psi_1^B(x)- \psi_1^A(x))+\psi_2^B(x)-\psi_2^{CA}-1}{2-\zbar}.}
\end{array}\right. 
\end{equation}

Thus, (\ref{coeftwodif}) is a particular case of (\ref{coeftwodif2}), when $\psi_2^{CA}=\psi_2^{A}(x)$ and $\psi_2^{CB}=\psi_2^{B}(x)$. 

This is similar for the BD process. If instead of $\varphi_1^A= 1$ and $\varphi_1^B= 1+\psi_1/N$ we consider $\varphi_1^A=1+\psi_{1}^A/N$ and $\varphi_1^B= 1+\psi_1^B/N$,  then (\ref{odeBD1}) is equivalent to
\begin{equation}\label{odeBD3}
[2(\psi_1^B(x)-\psi_1^A(x))-(\psi_2^B(x)-\psi_2^A(x))]f'(x)+f''(x)=0.
\end{equation}

\noindent Also, in the case that the individuals at the center and the leaves have different fitness functions, (\ref{odeBD1}) is equivalent to
\begin{equation}\label{odeBD4}
[(\psi_1^B(x)-\psi_1^A(x))+ (\psi_1^{CB}(x)-\psi_1^{CA}(x))-(\psi_2^B(x)-\psi_2^A(x))]f'(x)+f''(x)=0.
\end{equation}

So, (\ref{odeBD3}) is a particular case of (\ref{odeBD4}), when $\psi_1^{CA}=\psi_1^{A}(x)$ and $\psi_1^{CB}=\psi_1^{B}(x)$.

\subsection{Comparing the BD and DB processes on Star graph}\label{BDvsDB}

While the final forms of the approximations for the BD and DB processes may look quite different, they are in fact derived using the same underlying methodology. The divergence in the final formulas arises from the opposite ways in which the two update rules interact with the geometry of the star graph.

The star can be viewed as a structured population in which the $N-1$ leaves only interact indirectly through the central vertex. In the BD process, a mutant leaf can increase the leaf mutant count only after two successive reproduction events: first, the mutant leaf must be selected to reproduce, and in the following step, the center must in turn be chosen to reproduce. These consecutive, coordinated “leaf $\rightarrow$ center $\rightarrow$ leaf” events occur with probability one but require a long waiting time. Since the center has the same individual reproduction probability (depending on fitness) as any single leaf, it is much less likely to be chosen for reproduction than the leaves collectively. As a result, it often changes type many times before the number of mutants among the leaves increases or decreases. Fixation is therefore governed by these long-waiting coordinated events.

In contrast, the DB process reverses the order of events: death occurs first, and the replacement individual is chosen proportionally to fitness. When the center carries a mutant, it can repeatedly replace neighboring leaves before it is itself removed, exerting a strong directional influence on the periphery. Mutant expansion is thus dominated by frequent “center $\rightarrow$  leaf” replacements, rather than long waiting periods for two-step sequences expected in the BD process. It often takes a long time for the center itself to change type, and between such switches, it drives the replacement of surrounding leaves one after another. The resulting continuous approximation reflects this asymmetry and thus differs significantly from its BD counterpart.

\section{Fitness functions given by $2$-player games}\label{sec:2player}
In this section,  we focus on the BD process on a star graph in the particular case where the fitnesses are linear functions of the frequencies, determined by $2$-player games with weak-selection.  We follow~\cite{chalubsouza16} to show that the asymptotic qualitative behavior of a population structured as a star graph is the same as that structured as a complete graph.  In fact, for large $N$, if we only consider the leaves, we expect the behavior of the population in the star graph be quite similar to that in the complete graph, as the center of the star has the role of connecting leaves, i.e. leaves interact with each other through the center.

Consider the case that $\psi_2(x)=0$ and $\psi_1=(\psi_1^B-\psi_1^A)$, where $\psi_1^B$ and $\psi_1^A$ are given by the $2\times 2$ positive pay-off matrix

\begin{center}
\begin{tabular}{c|cc}
& $A$ &$B$\\\hline
$A$& $a$ &$b$\\
$B$ &$c$& $d$
\end{tabular}
\\[6pt]
\end{center}
that is,  $\psi_1^A(x)= a x+b(1-x)$ and $\psi_1^B(x)= c x+d(1-x)$. This is equivalent to write $\psi_1$ as $\psi_1(x)= \gamma (x-x^*)$, where $\gamma=(-a+b+c-d)$ and $x^*=(b-d)/\gamma$.

If $\gamma<0$ and $0<x^*<1$, we have the coexistence case. If $\gamma>0$ and $0<x^*<1$, the game is called coordination (or continuation game), when the two types have the same or corresponding fitnesses. If $\psi_1>0$, it is said that type $B$ dominates type $A$, and if $\psi_1<0$, type $A$ dominates type $B$. 

For $\kappa=\kappa_N>0$,  indicating the selection intensity, let us now modify our birth fitness function by $\varphi_1=1+\kappa^{-1}\psi_1/N$.  In the case that $\kappa^{-1}\gg 1$,  let $\theta_s=s\psi_1$ for $s=1,2$, and let $\phi_{1}^\kappa$ be the approximate fixation probability for the complete graph given in (\ref{BDcomplete}) (\cite{chalubsouza16}) and $\phi_{2}^\kappa$ be our approximation for the star graph in the BD case with the above fitness functions.  Following the same lines of argument in~\cite{chalubsouza16}, for the dominance case, if $\theta_s>0$, $B$ is dominant and
\begin{equation}\label{domB}
\phi_{s}^\kappa (x)=1-\exp(-\theta_s(0)x/\kappa)+O(\kappa).
\end{equation}
In fact, type $B$ dominates in the star graph faster than in the complete graph.  Similarly, if $\theta_s<0$, $A$ is dominant and
\begin{equation}\label{domA}
\phi_{s}^\kappa (x)=\exp(\theta_s(1)(1-x)/\kappa)+O(\kappa).
\end{equation}
So, in the star graph, type $A$ dominates slower than in the complete graph. An example is given in Figure~\ref{figdom}. 

\vspace*{2mm}
\begin{figure}[htbp]
\begin{center}
\begin{tabular}{cc}
\includegraphics[scale=0.72]{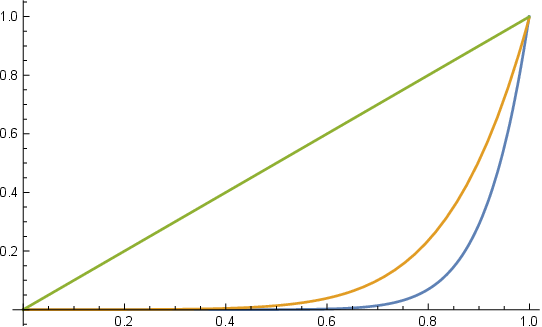}
&\includegraphics[scale=0.72]{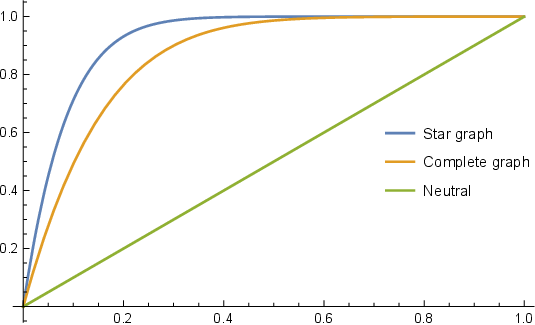}
\end{tabular}
\end{center}
\vspace*{-5mm}
\caption[Examples of dominance]{Approximations for the fixation probability in the star graph, in blue, and in the complete graph in orange.  The fixation probability for the neutral case is given in green.  On the left: $N=1000$, $\kappa^{-1}=10$, $\psi_1(x)=(x-1.5)$ and $\psi_2=0$; $A$ dominates.  On the right: $N=1000$, $\kappa^{-1}=10$, $\psi_1(x)=(x+0.5)$ and $\psi_2=0$; $B$ dominates.\label{figdom}}
\end{figure}

In the coexistence case, if $\int_0^1 \theta_s(r)dr\ll -\kappa$, the asymptotic approximation is given by (\ref{domB}). If $\int_0^1 \theta_s(r)dr\gg \kappa$, the asymptotic approximation is given by (\ref{domA}). Finally,  if $\int_0^1 \theta_s(r)dr \sim \kappa$,  we have
\begin{equation}\label{coe}
\phi_{s}^\kappa (x)=\frac{C}{C+\lambda} \exp(\theta_s(1)(1-x)/\kappa)+\frac{\lambda}{C+\lambda}(1-\exp(-\theta_s(0)x/\kappa))+O(\kappa),
\end{equation} with $\theta_s(0)> 0> \theta_s(1)$,  where $C=\exp(\kappa^{-1}\int_0^1 \theta_s(r)dr)$, and $\lambda=\vert\theta_s(1)\vert/\theta_s(0)$. 

In the coordination case, $\theta$ also has a unique root $x^*$, with $\theta'(x^*)>0$, and we have
\begin{equation}\label{coo}
\phi_{s}^\kappa (x)=\frac{\NN\left(\sqrt{\frac{\theta_s'(x^*)}{\kappa}}(x-x^*)\right)-\NN\left(-\sqrt{\frac{\theta_s'(x^*)}{\kappa}}x^*\right)}{\NN\left(\sqrt{\frac{\theta_s'(x^*)}{\kappa}}(1-x^*)\right)-\NN\left(-\sqrt{\frac{\theta_s'(x^*)}{\kappa}}x^*\right)}+O(\sqrt{\kappa})
\end{equation}
where $\NN(x)=\frac{1}{\sqrt{2\pi}}\int_{- \infty}^x e^{-y^2/2}dy$ is the normal cumulative distribution function. An example is given in Figure~\ref{figcoo}.

When $\kappa=1$, 
\[ 
\phi_{s}''(x)=\frac{-s\psi_1(x)e^{-s\int_0^r \psi(r)dr}}{\int_0^1 e^{-s\int_0^r \psi (r)dr}}
\]
implies that $A$ is dominant for a convex function $\phi_{s}$, and $B$ is dominant for a concave function $\phi_{s}$.  If $\phi_{s}$ has an inflection point 
with the concave part coming first (on the left of the inflation point) and the convex part coming next (on the right of the inflation point), the game is a coexistence game. Finally, if $\phi_{s}$ has an inflection point with the convex part coming first (on the left) and the concave part coming next (on the right), the game is a coordination game. 

In the DB process, if $\kappa$ is of order less than $N^{-1}$, then the behavior of the population does not essentially depend on the fitness functions.

\begin{figure}[htbp]
\begin{center}
\begin{tabular}{cc}
\includegraphics[scale=0.72]{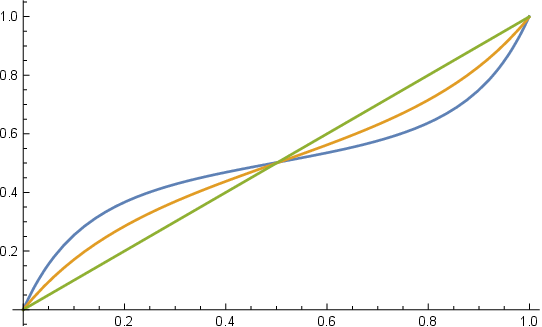}
&\includegraphics[scale=0.72]{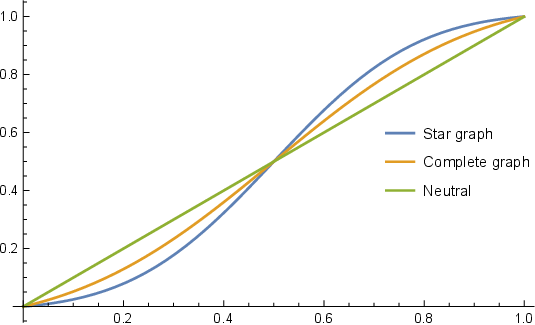}
\end{tabular}
\end{center}
\vspace*{-5mm}
\caption[Examples of coordination and coexistence games]{Approximations for the fixation probability in the star graph, in blue, and in the complete graph in orange, for the BD process. The fixation probability for the neutral case is given in green.  On the left: $N=1000$, $\kappa^{-1}=10$, $\psi_1(x)=(0.5-x)$ and $\psi_2=0$; we have the coexistence game.  On the right: $N=1000$, $\kappa^{-1}=10$, $\psi_1(x)=(x-0.5)$ and $\psi_2=0$; we have the coordination game. \label{figcoo}}
\end{figure}

\section{Invasion probability}\label{sec:IP}

Consider the approximate fixation probability vectors for the star graph introduced in Section~\ref{sec:odecandidate}. Recall that $\bar z=1/N$, and let $\phi_{1}(\bar{z})$ and $\phi_{2,\rho}(\bar{z})$ denote the approximate invasion probabilities of a single mutant on a complete graph (given in (\ref{BDcomplete})) and a star graph of the same size, respectively, where the sub-index $\rho=\rho_N$ in the latter denotes the probability that a single mutant in the population occupies the center of the star. Here, by a complete graph, we mean a graph in which every pair of vertices is connected by an edge, and, additionally, there is a loop at each vertex.

In this section, we analyze the limiting behavior of the ratio of invasion probabilities in the star and complete graphs under weak selection, for both DB and BD processes. Specifically, we study the limit
\[
\lim_{\bar{z} \to 0} \frac{\phi_{2,\rho}(\bar{z})}{\phi_{1}(\bar{z})}
\quad (\text{equivalently, as} \ N \to \infty),
\]
for both the BD and DB processes, and under different regimes for the probability $\rho$ that the initial mutant appears at the center of the star. This limit characterizes how the population structure influences the success of rare mutants, depending not only on the update rule (BD or DB) and the spatial location of the initial mutation, but also on how the probability $\rho$ is chosen. Based on the value of this limit, the star graph may act as an amplifier or suppressor of selection, or have the same asymptotic invasion probability as the complete graph.

Formally, for a complete graph with \(N\) vertices, let \(\tilde h_N(i)\) denote the fixation probability of the mutant when the initial configuration includes exactly \(i\) mutants and \(N-i\) residents. We say that the mutant type $B$ is asymptotically stronger (weaker, respectively) than the resident type $A$, denoting by $B \succ A$ ($A\succ B$, respectively, if $\liminf_N \tilde h_{2N}(N)>1/2$ ($\limsup_N \tilde h_{2N}(N)<1/2$, respectively). We write $A\sim B$ if $\lim_{N\rightarrow \infty} \tilde h_{2N}(N)=1/2$. 

To see a particular case of this, recall that under the reduced Markov chain on the complete graph, the probability of increasing the number of mutants from \( k \) to \( k+1 \) is  
\[
\frac{\varphi_1(k/N)}{\varphi_1(k/N) + \varphi_2(k/N)},
\]  
and the probability of decreasing from \( k \) to \( k-1 \) is  
\[
\frac{\varphi_2(k/N)}{\varphi_1(k/N) + \varphi_2(k/N)}.
\]  

This means that the probability of increasing mutants exceeds that of decreasing them for all states if and only if \( \psi_1 - \psi_2 > 0 \). In this case we say $B$ is stronger than $A$ for all configurations. This is stronger than the ``stronger'' concept we defined above, and clearly implies $B\succ A$. Similarly if \( \psi_1 - \psi_2 < 0 \) we say $A$ is stronger than $B$ for all configurations which implies $A\succ B$. Finally, we say the process is neutral if \( \psi_1 - \psi_2 = 0 \). Under neutrality, we have $A\sim B$.

Suppose that the limit  
$
\beta=\beta(\psi_1,\psi_2) := \lim_{\bar{z} \to 0} \phi_{2,\rho}(\bar{z})/\phi_1(\bar{z})
$
exists. We say that the star graph is an \emph{amplifier} of selection under the BD (or DB) process if $B\succ A$ and \( \beta > 1 \), or if $A\succ B$ and \( \beta < 1 \). That is, the star structure strengthens the impact of selection, making the fitter type even more likely to fix than under the complete graph.

Conversely, we say it is a \emph{suppressor} of selection if $B\succ A$ and \( \beta < 1 \), or if $A\succ B$ and \( \beta > 1 \).
We say the star graph is 
\emph{asymptotically invasion probability preserving} (or a preserver for short), if \( \beta = 1 \).
This usage is consistent with the fact that, for constant fitness, the class of processes that are invasion probability preserving  can be larger than the isothermal graphs as observed in \cite{pattni2015evolutionary}.

This can be summarized as follows. Assume that $h_{2N}(N)$ converges as $N\to\infty$ and let \[\tau=\tau(\psi_1,\psi_2)=\sign\left(\lim_{N\rightarrow \infty} \tilde h_{2N}(N)- \frac{1}{2}\right), \]
where $\sign(x)=1$ if $x>0$, $\sign(x)=-1$ if $x<0$, and $\sign(x)=0$ if $x=0$. We call $\beta^\tau$ the magnitude of amplification-suppression.
\begin{definition}
Suppose that the limit $\beta$ exists. We say that the star graph is:
\begin{enumerate}
    \item[i)] an \emph{amplifier} of selection if $\beta^\tau>1$,
    \item[ii)] a \emph{suppressor} of selection if $\beta^\tau<1$,
    \item[iii)] and \emph{asymptotically invasion probability preserving} if \( \beta = 1 \).
\end{enumerate}
\end{definition}
 This notion naturally extends beyond the star graph: the same classification can be applied to any structured population, as long as a suitable limit of the ratio of fixation (or invasion) probabilities is well-defined.

Note that $(iii)$ in the above definition is stronger than $\beta^\tau=1$, as the latter is equivalent to either $\beta=1$ (being \aipp) or $\tau=0$ (being neutral). Also note that, for both BD and DB processes, there are cases in which the process with fitness functions $\psi_1$ and $\psi_2$ is neutral on the complete graph (well-mixed population), while it is not neutral on the star graph. In this case, we may have $\beta>1$ or $\beta<1$.

We begin with a brief explanation of the invasion probability in the complete graph in Subsection~\ref{invcomplete}, which serves as the baseline for comparison. We then analyze the ratio of invasion probabilities between the star and the complete graph, starting with the DB process in Subsection~\ref{ratiodb}, followed by the BD process in Subsection~\ref{ratiobd}.

\subsection{Invasion probability in the complete graph}\label{invcomplete} 
We assume that the unique mutant resides at the center of the star with probability \(\rho = \rho_N \in [0,1]\). Since the center is a distinguished vertex in the star, it is natural to compare the invasion probability in the star with that in the complete graph under the assumption that a distinguished vertex is chosen for the mutant to reside with probability \(\rho\). More precisely, consider a bijective map $\zeta$ from the star graph with $n-1$ leaves to the complete graph with $n$ vertices. The image of the center of the star under $\zeta$ becomes a distinguished vertex in the complete graph, which we refer to as the \emph{root} of the complete graph under $\zeta$.
The distribution 
\[
(\rho, (1-\rho)/(N-1), \ldots, (1-\rho)/(N-1))
\]
for selecting the site of a single initial mutant in the star graph can be pushed forward through $\zeta$ to define a corresponding distribution on the complete graph: the root is selected with probability $\rho$, and each of the other vertices with probability $(1-\rho)/(N-1)$.
However, due to the symmetry of the complete graph, the specific choice of $\zeta$ and $\rho$ does not affect the invasion probability of a single mutant. In other words, the invasion probability on the complete graph is independent of the particular choices of $\zeta$ and $\rho$.
Furthermore, the invasion probability of a single mutant in a complete graph of size $n$ is the same under both the BD and DB processes.

For a complete graph with \(N\) vertices, recall that \(\tilde h(i)=\tilde h_N(i)\) denotes the fixation probability of the mutant when the initial configuration includes exactly \(i\) mutants and \(N-i\) residents. For \(i = 1, \dots, N-1\), we have
\[
\tilde h(i) = p_i \cdot \tilde h(i+1) + q_i \cdot \tilde h(i-1),
\]
with boundary conditions \(\tilde h(N) = 1\), \(\tilde h(0) = 0\), where
\begin{equation}\label{jump-prob-complete}
    p_i^{N} = p_i :=\frac{\varphi_1(i/N)}{\varphi_1(1/N)+\varphi_2(i/N)}=\frac{1 + \psi_1\left(\frac{i}{N}\right)/N}{2 + \left(\psi_1\left(\frac{i}{N}\right) + \psi_2\left(\frac{i}{N}\right)\right)/N}
\end{equation}
is the probability that, conditional on having \(i\) mutants in the current state, a resident dies and is replaced by an offspring of a mutant, so that the number of mutants increases by 1. Similarly, for \(i = 1, \dots, N-1\), \(q_i^{N} = q_i := 1 - p_i\) is the probability that the Markov chain jumps from state \(i\) mutants to \(i+1\) mutants at the first jump.

From (1), letting \(\Delta_j = \tilde h(j) - \tilde h(j-1)\), we get \(\Delta_{j+1} = (q_j / p_j) \Delta_j\). Hence,
\[
\Delta_{j+1} = \left( \prod_{r=1}^j \frac{q_r}{p_r} \right) \tilde h(1),
\]
and then
\[
1 = \tilde h(N) = \tilde h(1) + \sum_{j=1}^{N-1} \Delta_{j+1} = \tilde h(1) \left( 1 + \sum_{i=1}^{N-1} \prod_{r=1}^i \frac{q_r}{p_r} \right).
\]

Therefore, for \(i = 1, \dots, N\),
\begin{equation}
\tilde h_N(i) = \frac{1 + N \cdot \bar{J}_{i,N}}{1 + N \cdot \bar{J}_{N,N}}, \tag{2}
\end{equation}
where
\[
\bar{J}_{i,N} = \frac{1}{N} \sum_{j=1}^{i-1} \prod_{r = 1}^{j} \frac{q_r}{p_r}, \quad i = 1, \dots, N.
\]

Note that, by convention, we assume \( \sum_{j=1}^0 x_j= 0\) for any sequence $(x_j)$.
The next step is to find the asymptotics for \(\tilde h(i)\). 
For \(u, s \in [0,1]\), let
\begin{align*}
\chi(u) &:= \psi_2(u) - \psi_1(u), \\
\xi(y) &:= \exp\left( \int_0^y \chi(u)\, du \right), \\
\mathcal{J}(s) &:= \int_0^s \xi(y)\, dy.
\end{align*}
\begin{proposition}\label{fixation-Complete}
For any \(i = 1, \dots, N\), we have
\[
\tilde h_N(i) = \frac{\mathcal{J}(i/N)}{\mathcal{J}(1)} + O(N^{-1}).
\]
\end{proposition}
\begin{proof}
    See Appendix~\ref{AppendixB}.
\end{proof}
\begin{remark}
In particular, \[\phi_{1}(\bar z):=\frac{\mathcal{J}(1/N)}{\mathcal{J}(1)},\] and the actual invasion probability of a mutant, in the complete graph of size $N$, is $\phi_1(\bar z)+O(N^{-1})$.
\end{remark}
\begin{remark}
If \(i = i(N)\) is such that \(i/N \to s \in [0,1]\), then
\[
\tilde h_N(i) = \frac{\mathcal{J}(s)}{\mathcal{J}(1)} + O(N^{-1}).
\]
\end{remark}

In addition, it is clear that
\[
N \tilde h_N\left( 1 \right) \to \left( \mathcal{J}(1) \right)^{-1}
\quad \text{as } N \to \infty.
\]

\subsection{Invasion in the star graph: DB process}\label{ratiodb}
In the DB process, a large population has a much higher chance to resist against the invasion of a mutant occupying a leaf than  one occupying the center.  In fact, the fixation probability of a single mutant starting at a leaf is approximately $1/(2N)$. On the other hand, if a mutant invades the center, it has approximately $1/2$ chance to fix. Letting $\rho$ be the probability that a single mutant in the population occupies the center, our approximate invasion probability in the DB process is
\[
\phi_{2,\rho}(\zbar)=\rho\left(\frac{N}{2N-1}+\frac{g_1(0)}{N}\right) +(1-\rho)\left(\frac{1}{2N-1}+\frac{g_2(1/N)}{N} \right).
\]
From Theorem~\ref{rDB}, the actual invasion probability of a mutant is given by
\[
\tilde \phi_{2,\rho}(\bar z):=\rho F_{N+1}+(1-\rho)F_2=\rho P_{1,0}+(1-\rho) P_{2,\bar Z}=\phi_{2,\rho}(\bar z)+O(N^{-1}).
\]
We observe that, in the DB process, the limit of the ratio of invasion probabilities between the star and the complete graph diverges as $N \to \infty$, provided that $\rho$ is constant and strictly positive (i.e., independent of $N$). That is,
\[
\lim_{N \to \infty} \frac{\tilde\phi_{2,\rho}(\bar z)}{\tilde h_N(1)}=\lim_{N \to \infty} \frac{\phi_{2,\rho}(\bar z)}{\phi_{1}(\bar z)} = \infty \quad \text{if } \rho > 0 \text{ is constant}.
\]

This shows that if $B\succ A$, the star graph functions as an amplifier of selection under the DB process for any constant $\rho > 0$ and for general fitness functions $\psi_1$ and $\psi_2$. Similarly, it is a suppressor of selection, if $A\succ B$ and $\rho>0$ is constant.

By contrast, in prior studies where the initial mutant is assumed to be placed uniformly at random across all vertices, the probability of occupying the center is $\rho = 1/N$. Under this regime, the star graph becomes a suppressor of selection, if $B\succ A$ and an amplifier of selection if $A\succ B$.

While the limiting uniform placement case has been studied before, general regimes for $\rho = \rho_N$ appear not to have been systematically analyzed. To better understand the transition between these regimes, we assume $\rho_N\to 0$ as $N\to \infty$, and study different scaling regimes of $\rho_N$. First, consider $\rho_N = O(1/N)$, i.e., $N \rho_N \to \alpha \in [0,\infty)$. In this regime, the ratio between the star and complete graph fixation probabilities becomes
\begin{eqnarray}\label{DB-invasion-proportion}
\lim_{N \to \infty} \frac{\tilde\phi_{2,\rho}(\bar z)}{\tilde h_N(1)}=
\frac{\alpha + 1+\tilde C}{2}
\int_0^1 e^{ -\int_0^y (\psi_1(u) - \psi_2(u)) \, du } dy \nonumber \\
\times
\lim_{\zbar \to 0}
\frac{1}{N \int_0^{\zbar} e^{ -\int_0^y (\psi_1(u) - \psi_2(u)) \, du } dy } \nonumber\\
=  \frac{\alpha + 1+\tilde C}{2}
\int_0^1 e^{ -\int_0^y (\psi_1(u) - \psi_2(u)) \, du } dy,
\end{eqnarray}
for some constant $\tilde C=\tilde C(\psi_1,\psi_2)$, where $C'$ comes from the error of the approximate invasion probability when the unique mutant resides in a leaf, that is, the error is of order
\[
\frac{1}{2N-1}+\frac{g_2(1/N)}{N} - P_{2,\bar z}=\frac{\tilde C}{N}+o(N^{-1}).
\]
Based on numerical examples, we conjecture that $\tilde C=0$ and the error of the invasion probability is of order $o(N^{-1})$.

Although the expression on the right of (\ref{DB-invasion-proportion}) is a finite value, it can be greater than, equal to, or smaller than $1$ depending on the values of $\tilde C$ (that we conjecture is $0$) and $\alpha$ and the behavior of the exponential integral. Therefore, in this regime, the star graph may act as an amplifier, a suppressor, or \aipp, depending on the fitness functions $\psi_1$ and $\psi_2$.

Now consider the case $\rho_N \gg 1/N$, i.e., $N \rho_N \to \infty$ (for instance, $\rho_N = 1/\sqrt{N}$). In this regime, as $N \to \infty$, we have

\[
\lim_{N \to \infty} \frac{\tilde\phi_{2,\rho}(\bar z)}{\tilde h_N(1)}=\lim_{N \to \infty}\frac{\rho_N N\int_0^1 e^{ -\int_0^y (\psi_1(u) - \psi_2(u)) \, du } dy}
{2N\int_0^{1/N} e^{ -\int_0^y (\psi_1(u) - \psi_2(u)) \, du } dy}=\infty,
\]\\[2pt]

and hence the star graph acts as an amplifier of selection in this scaling regime if $B\succ A$, and as a suppressor if $A\succ B$.

This result shows that even sublinear probabilities for the initial mutant to occupy the center of the star graph---such as $\rho_N = 1/\sqrt{N}$---are sufficient to generate amplification in large populations if $B\succ A$. The analysis demonstrates a nontrivial asymptotic transition: any scaling of $\rho_N$ that decays more slowly than $1/N$ leads to amplification if $B\succ A$. This includes the case where $\rho_N$ is constant and positive (i.e., $\rho_N = \rho > 0$), as such a choice also satisfies $N \rho_N \to \infty$.

From this analysis, we can deduce the following theorem.

\begin{theorem}\label{thm:2}
Consider the DB process on the star graph with $N$ vertices. Suppose $B\succ A$, and let $\rho_N$ be such that $\alpha = \lim_{N \to \infty} N \rho_N \in [0,\infty]$. Then there exists a finite $C > 0$ such that
\begin{enumerate}
    \item[i)] If $\alpha > C$, the star is an amplifier of selection;
    \item[ii)] If $\alpha = C$, the star is \aipp;
    \item[iii)] If $\alpha < C$, the star is a suppressor of selection.
\end{enumerate}   
\end{theorem}
\begin{remark}
    The case $A\succ B$ can be treated similarly.
\end{remark}
\begin{remark}\label{alpha-switch-value}
    The constant $C$ depends on $\psi_1$ and $\psi_2$, and can be computed if we have more information about the error $\tilde \phi_{2,\rho}(\bar z)- \phi_{2,\rho}(\bar z)$. The numerical values of $C$ are given later in this section. As mentioned in the discussion before the theorem, we conjecture $\tilde C=0$ in (\ref{DB-invasion-proportion}). Assuming that $\tilde C$ vanishes, we can compute the exact value of $C$ in Theorem~\ref{thm:2}. More explicitly, from (\ref{DB-invasion-proportion}) $C$ should satisfy $
    (C+1)\mathcal{J}(1)/2=1,$
    implying $$C=(2-\mathcal{J}(1))/\mathcal{J}(1).$$.
\end{remark}
\begin{remark}
    Under the neutrality assumption in the complete graph, i.e. $\psi_1-\psi_2=0$, we have $C=1$, that is, for $\rho=1/N$ the star graph becomes \aipp.
\end{remark}

To illustrate the implications of Theorem~\ref{thm:2} and to gain intuition about the dependence of invasion success on model parameters, we begin with a simple example where the difference of the fitness functions is constant, that is, $\psi_1(x) -\psi_2(x) = r$, for $x\in [0,1]$, with $r$ constant. This setting represents a linear selection gradient independent of frequency and provides a useful reference for more complex dynamics. As mentioned before, if $r>0$, then $B\succ A$; if $r<0$, then $A\succ B$; and if $r=0$, then $A\sim B$. In other words, we have $\tau=\sign(r)$ . We also consider a scaling of the initial mutant placement probability $\rho_N = \alpha / N$, where $\alpha \geq 0$. This corresponds to a regime where the center receives a small but non-negligible probability of mutation, and allows us to explore how the value of $\alpha$ affects whether the star graph acts as an amplifier or suppressor. In this case, $\beta=(1-e^{-r})(1+\alpha)/(2r)$ (see Figure~\ref{Figure-1-DB}).

\begin{figure}[htbp]
\begin{center}
\includegraphics[scale=0.52]{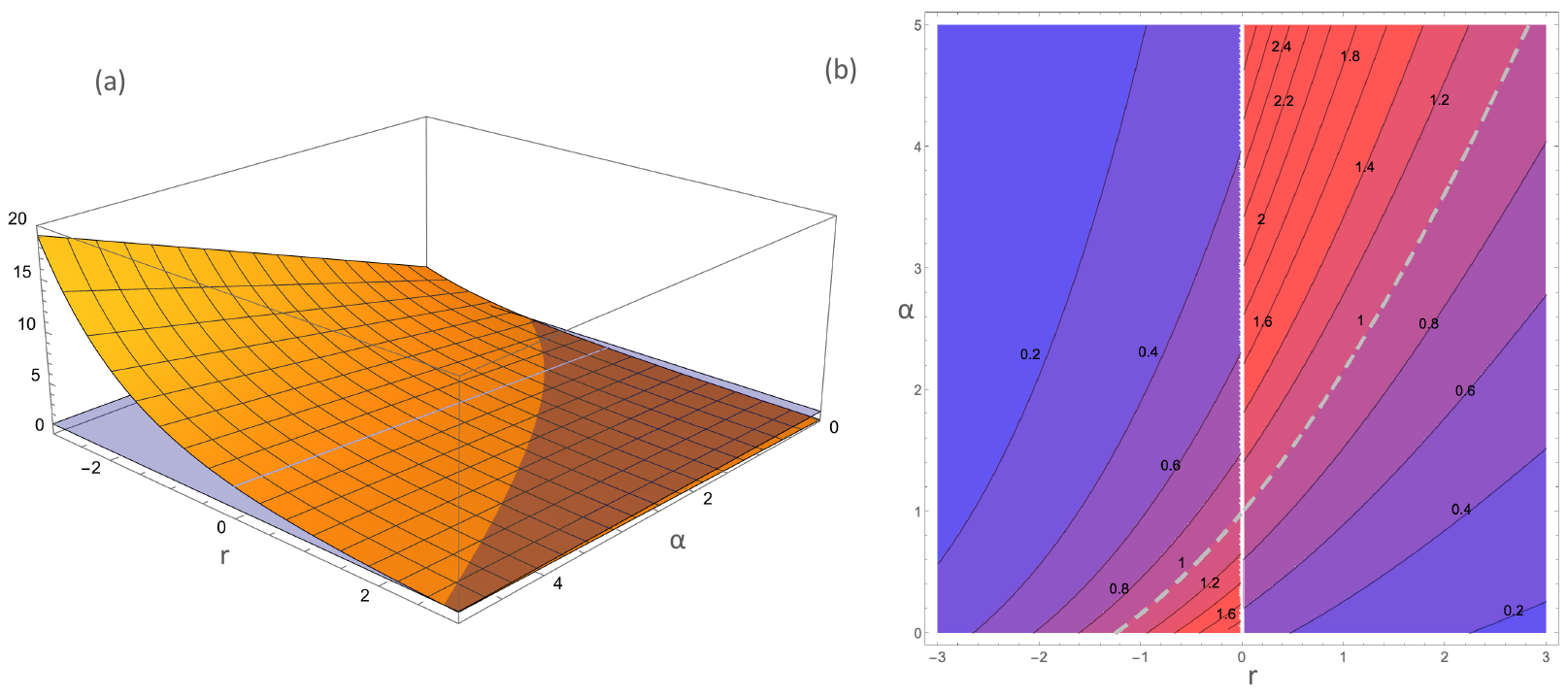}
\end{center}
\vspace*{-5mm}
\caption{
Limit of the ratio of invasion probabilities $\beta=\lim_{\bar{z} \to 0}\phi_{2,\rho} / \phi_1$ in the DB process for $\psi_1(x) - \psi_2(x) = r$, with $\rho_N = \alpha / N$, over the parameter ranges $-3 \leq r \leq 3$ and $0 \leq \alpha \leq 5$. (a) 3D surface plot of the limit ratio $\beta$ in orange. (b) Contour plot of $\beta^\tau$, for $r\neq 0$, where the color scale indicates whether the star graph acts as an amplifier (red) or a suppressor (blue) relative to the complete graph. The dashed line correspond to $\beta=1$.
\label{fig:constantfit3D}\label{Figure-1-DB}}
\end{figure}

Figure~\ref{fig:constantfit3D} (a) shows the three-dimensional surface of the ratio of the limit $\beta=\lim_{N\rightarrow \infty}\phi_{2,\rho} / \phi_1$ as a function of $(r,\alpha)$, and Figure~\ref{fig:constantfit3D} (b) shows the corresponding contour plot for $\beta^\tau$, where as before, $$\tau=\sign\left(\frac{\mathcal{J}(1/2)}{\mathcal{J}(1)}-\frac{1}{2}\right).$$ The results reveal a transition between suppressor and amplifier behavior that depends jointly on both parameters. Specifically, for each fixed value of $r$, there exists a critical value of $\alpha$ at which the star graph switches from acting as a suppressor (or amplifier) to acting as an amplifier (or suppressor, respectively). This transition occurs along the curve
\[
\alpha=\alpha(r)= \frac{2r+e^{-r}-1}{ 1-e^{-r}},
\]
which corresponds to the level set where the invasion probability ratio equals one, i.e. $\beta=1$. This determines the value of $C$ in Theorem~\ref{thm:2} and Remark~\ref{alpha-switch-value}, and corresponds to the regimes under which the DB process on the star graph is \aipp. Furthermore, this analytical threshold is derived from Theorem~\ref{thm:2} and matches the empirical transition curve in the plots. Even in this simple case, the star graph exhibits a rich spectrum of structural effects depending on selection intensity and initial mutation placement.

As a next example, we now consider the class of frequency-dependent fitness functions derived from  $2 \times 2$ evolutionary games under weak selection, as discussed in Section~\ref{sec:2player}. In this setting, the fitness difference $\psi_1$ takes the form $\psi_1(x) = \gamma(x - x^*)$ and $\psi_2=0$, with $\gamma, x^*\in \mathbb R$. 
We also let $\rho_N=1/N$. This parametrization arises from a payoff matrix under weak selection, and it captures all classical game types: dominance, coexistence, and coordination.

\begin{figure}[h!]
\begin{center}
\includegraphics[scale=0.55]{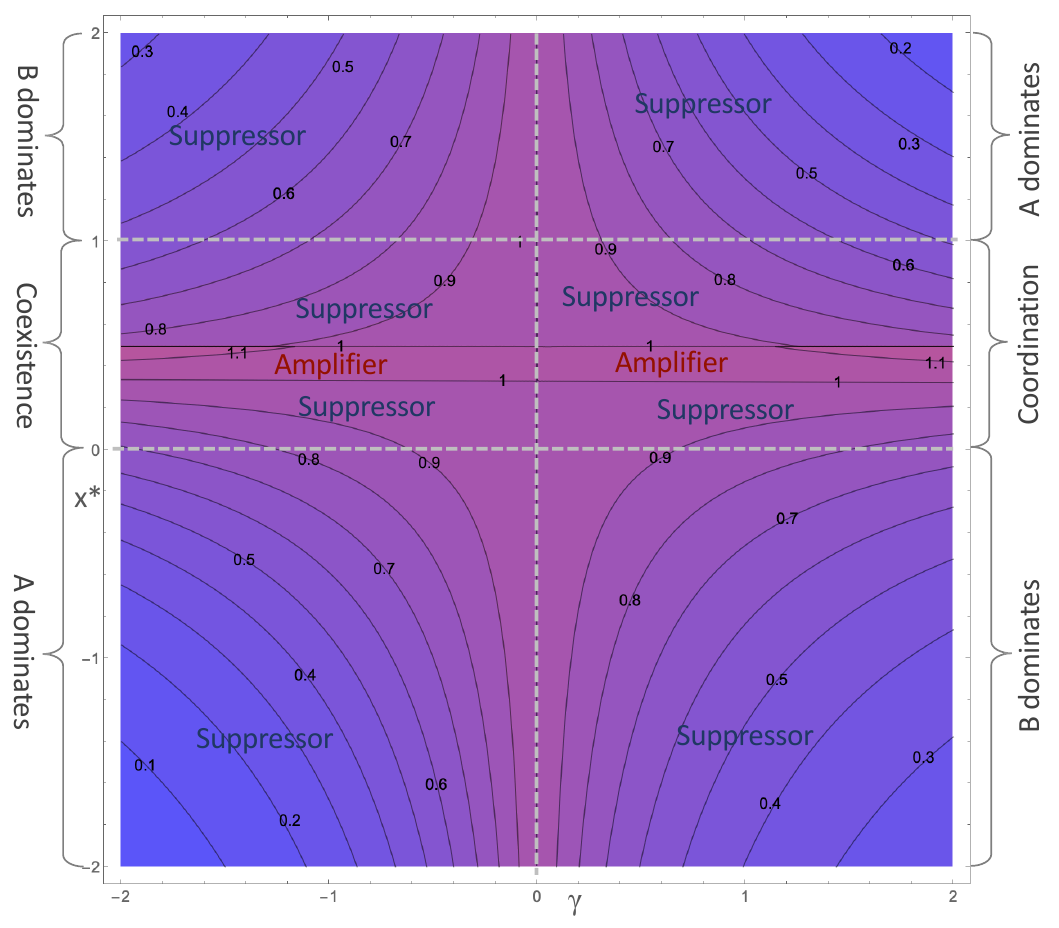}
\end{center}
\vspace*{-5mm}
\caption{
Contour plot of $\beta^\tau$ for the DB process, where the fitness functions are derived from a two-player game with weak selection. We consider $\psi_1(x) = \gamma(x - x^*)$, $\psi_2(x) = 0$ and $\rho_N=1/N$, a parametrization that captures all classical game types (dominance, coexistence, and coordination) arising from $2 \times 2$ payoff matrices. The domain spans $-2 < \gamma < 2$ and $-2 < x^* < 2$, and the color scale indicates whether the star graph acts as an amplifier (red) and suppressor (blue) relative to the complete graph.} 
\label{fig:contourDB}
\end{figure}

 In this case, we have $B\succ A$ if $\gamma>0$ and $x^*< 1/2$, or if $\gamma<0$ and $x^*>1/2$. Similarly, $A\succ B$ if $\gamma>0$ and $x^*> 1/2$, or if $\gamma<0$ and $x^*<1/2$. To see this, from (\ref{jump-prob-complete}), note that the probability of increasing the number of mutants from $k$ to $k+1$ in the complete graph of size $N$ is
\[
p_k^N=\frac{1+\frac{\gamma (x-x^*)}{N}}{2+\frac{\gamma (x-x^*)}{N}},
\]
for $x=k/N$. Therefore, $p_k^N>1/2$ if and only if $\gamma (x-x^*)>0$; $p_k^N<1/2$ if and only if $\gamma (x-x^*)<0$; and $p_k^N=1/2$ if and only if $\gamma (x-x^*)=0$. As result, a positive value of $\psi_1(x)=\gamma (x-x^*)$ benefits type $B$ at state $x$, and a negative value of that benefits type $A$ at $x$. Hence, assuming $x^*<1/2$, and $\gamma>0$ benefits type $B$ for a larger range $[\max \{x^*,0\},1]$, implying $B\succ A$. Similarly, $x^*>1/2$ and $\gamma>0$ benefits type $A$ on a longer interval $[0, \min\{x^*,1\}]$, implying $A\succ B$. The case for $\gamma<0$ can be treated similarly. This can be summarized in the following way:
\begin{itemize}
    \item[i)] $B\succ A$ iff $\gamma(\frac{1}{2}-x^*)>0$;\\[-6pt]
    \item[ii)] $A\succ B$ iff $\gamma(\frac{1}{2}-x^*)<0$;\\[-6pt]
    \item[iii)] $A\sim B$ iff $\gamma(\frac{1}{2}-x^*)=0$.
\end{itemize}
Therefore,
\begin{equation}
\tau=\sign\left(\frac{\mathcal{J}(1/2)}{\mathcal{J}(1)}-\frac{1}{2}\right)=\sign \left(\gamma \left(\frac{1}{2}-x^*\right)\right).
\end{equation}

\noindent One could also directly see this by algebraicly proving that $\mathcal{J}(1/2)/\mathcal{J}(1)-1/2=0$ if and only if $\gamma(1/2-x^*)=0$.

Figure~\ref{fig:contourDB} shows a contour plot of $\beta^\tau$ for $\tau\neq 0$, over the domain $-2 < \gamma < 2$ and $-2 < x^* < 2$, where  $\beta=\lim_{\bar z\rightarrow 0}\phi_{2,\rho}(\bar{z})/\phi_{1}(\bar{z})$ as before. The color scale reflects whether the star graph acts as an amplifier of selection (red, ratio $> 1$), a suppressor (blue, ratio $< 1$), or \aipp (ratio $\approx 1$). The figure illustrates a clear symmetry in how structure affects invasion outcomes. In the case of dominance of $B$ or $A$, that is when either $x^*<0$, or $x^*>2$, the star is a suppressor of selection. 
In the coexistence regime ($\gamma < 0$, $x^* \in (0,1)$), or the coordination regime ($\gamma > 0$, $x^* \in (0,1)$), the star can behave as an amplifier, \aipp, or suppressor, depending on the value of $x^*$. In fact, in these cases the star transitions from suppressor to \aipp to amplifier to \aipp to suppressor as $x^*$ increases. The star is an amplifier of selection when $(\gamma, x)$ is chosen from the area between two curves passing through $(\gamma,x)=(0,1/3)$ and $(\gamma,x)=(0,1/2)$ (these curves remain very close to the lines $x^*=1/3$ and $x^*=1/2$ in this area), and it is a suppressor of selection when the parameters are chosen from out of this closed area.

\subsection{Invasion in the star graph: BD process}\label{ratiobd}

In the BD process, the approximate fixation probability of a single mutant at the center is of order $N^{-2}$, or more precisely it is equal to $N^{-2} f'(0)$. This is much smaller than this probability in the DB process. On the other hand, the approximate fixation probability of a single mutant at a leaf is $f(1/N)$. Therefore, the approximate  invasion probability in the BD process is
\[
\phi_{2,\rho}(\bar z):=\frac{\rho f'(0)}{N^2}+(1-\rho) f(1/N),
\]
where again $\rho=\rho_N$ is the probability that the single mutant occupies the center. Note that
\[
\beta=\lim\limits_{\bar z\rightarrow 0} \frac{\phi_{2,\rho}(\bar z)}{\phi_1(\bar z)}=\lim\limits_{\bar z\rightarrow 0}\frac{\int_0^{\bar z} e^{-\int_0^s (2\psi_1(r)-\psi_2(r))dr}ds}{\mathcal{J}(\bar z)} \cdot \lim\limits_{\bar z\rightarrow 0}\frac{\mathcal{J}(1)}{\int_0^{1-\zbar} e^{-\int_0^s (2\psi_1(r)-\psi_2(r))dr}ds},
\]
where the first limit on the right equals $1$. Therefore,
\[
\beta=\lim\limits_{\bar z\rightarrow 0} \frac{\phi_{2,\rho}(\bar z)}{\phi_1(\bar z)}=\frac{\int_0^{1} e^{-\int_0^s (\psi_1(r)-\psi_2(r))dr}ds}{\int_0^{1} e^{-\int_0^s (2\psi_1(r)-\psi_2(r))dr}ds}.
\]
Note that $\psi_2 - \psi_1 = \psi_2 - 2\psi_1 \quad \text{iff} \quad \psi_1 =0.$
Therefore, if $\psi_1 = 0$ then $\beta = 1$; if $\psi_1 > 0$ then $\beta > 1$, if $\psi_1 < 0$ then $\beta < 1$. On the other hand, we know if $\psi_1 > \psi_2$ then $B \succ A$, if $\psi_1 < \psi_2$ then $A \succ B$, if $\psi_1 = \psi_2$ then $A \sim B$. As a result, the star graph under BD is an amplifier if 
\[
\psi_1 > 0 \geq \psi_2, \quad \text{or} \quad \psi_1 > \psi_2 > 0, \quad \text{or} \quad \psi_2 \geq 0 > \psi_1, \quad \text{or} \quad 0>\psi_2 > \psi_1.
\]
It is a suppressor if $0 > \psi_1 > \psi_2 \quad \text{or} \quad \psi_2 > \psi_1 > 0.
$
It is \aipp if $\psi_1 = 0$.

In particular, letting $\rho=1/N$, for the BD process on the star graph with $\psi_1=r>0$, constant, and $\psi_2=0$, the approximate invasion probability for a uniformly selected mutant site is 
\[
\phi_{2,\rho}(\bar z)=\frac{e^{-2r\zbar}(1-z+e^{2r\zbar}(-1+z-4r\zbar^3))}{-1+e^{2r(-1+\zbar)}}.
\]
Then we have,
\[\beta= \frac{2e^r}{1+e^r},\]
which is an increasing function and for $r>0$, this limit is greater than $1$. Therefore, the star is an amplifier of selection for a sufficiently large population, when the site of the initial single mutant is chosen uniformly at random, and $r>0$.\\

For a general $\rho$, the limit of the invasion probability in the star over the invasion probability in the complete graph is

\begin{equation}\label{rhoSp}
\beta= \frac{2e^r(1-\rho)}{1+e^r},
\end{equation} 
as $\zbar$ tends to zero; see Figure~\ref{inic} for the plot of (\ref{rhoSp}). From Figure~\ref{inic}, we can see that, for a sufficiently large population, if the probability $\rho$ of choosing the center is smaller than $\frac{1}{2}(1-e^{-r})$, the expression in (\ref{rhoSp}) is greater than 1 which means that, for the BD process, the star is an amplifier of selection for a sufficiently large population. On the contrary, if $\rho > \frac{1}{2}(1-e^{-r})$, then the star becomes a suppressor under the BD process. 
For $\rho=\frac{1}{2}(1-e^{-r})$, we have $2e^r(1-\rho)(1+e^r)^{-1}=1$, meaning that the star is \aipp in this case. 

\begin{figure}[htbp]
\begin{center}
\includegraphics[scale=0.52]{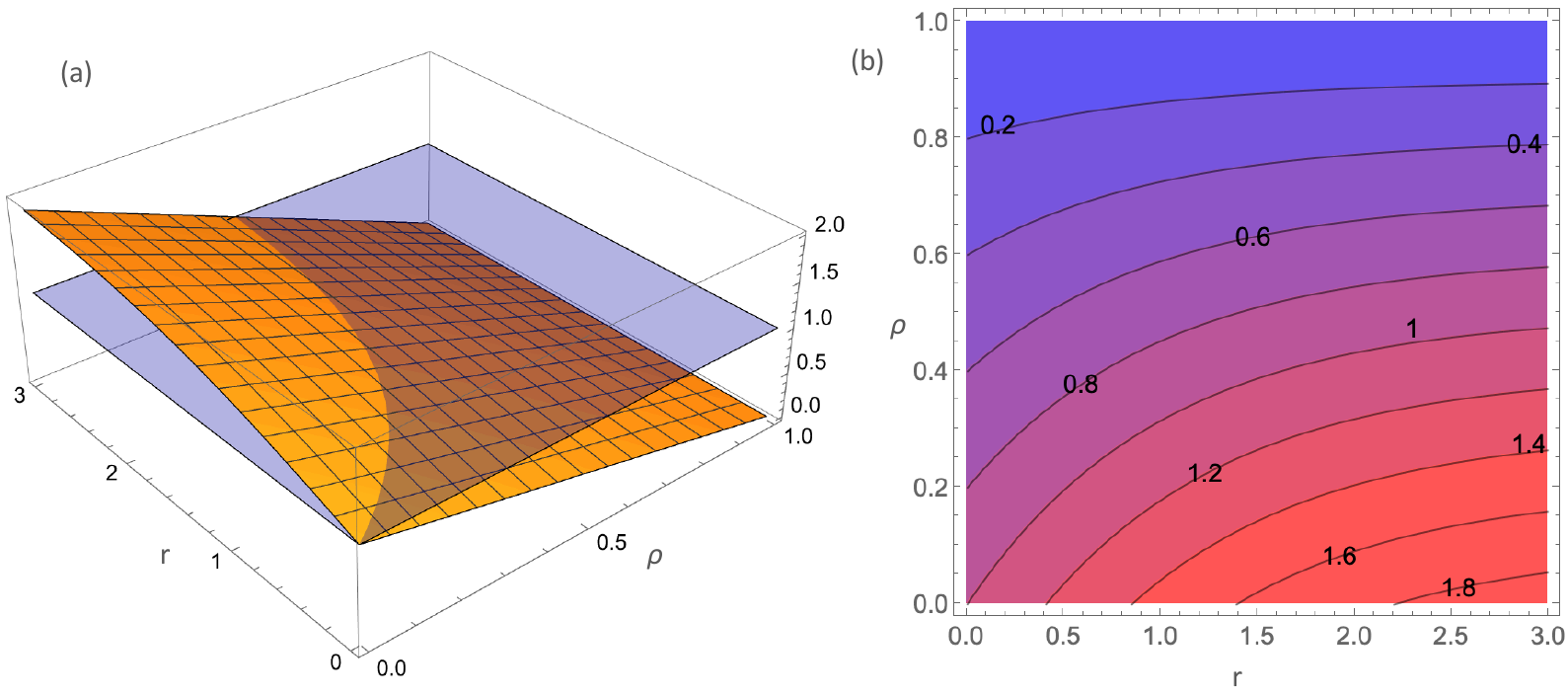}
\end{center}
\vspace*{-5mm}
\caption{Limit of the ratio of invasion probabilities $\beta=\lim_{\bar{z} \to 0}\phi_{2,\rho} / \phi_1$ in the BD process for $\psi_1(x)=r>0$ and  $\psi_2(x) = 0$, over the parameter ranges $0 \leq r \leq 3$ and $0 \leq \rho \leq 1$. (a) 3D surface plot of the limit ratio (\ref{rhoSp}) in orange. (b) Contour plot of the same limit ratio, where the color scale indicates whether the star graph acts as an amplifier (red) or a suppressor (blue) relative to the complete graph.
\label{inic}}
\end{figure}

The discussion above can be summarized as follows. Consider the BD process on the star graph with $N$ vertices. Suppose $\psi_1=r>0$ and $\psi_2=0$. Suppose the center (each leaf, respectively) is selected with probability $0\leq \rho\leq 1$ (with probability $(1-\rho)/(N-1)$, respectively) to be resided by a single mutant in the population:
    \begin{enumerate}
        \item[i)] If $\rho<\frac{1}{2}(1-e^{-r})$, then the star is an amplifier of selection for sufficiently large $N$.\\[-6pt]
        \item[ii)] If $\rho>\frac{1}{2}(1-e^{-r})$, then the star is a suppressor of selection for sufficiently large $N$.\\[-6pt]
        \item[iii)] If $\rho=\frac{1}{2}(1-e^{-r})$, then the star is \aipp.
    \end{enumerate}

In particular, if there is a single mutant in the population that resides at a uniformly random vertex of the star, then, for sufficiently large $N$, the star is an amplifier of selection under the BD mechanism.

We now consider the class of frequency-dependent fitness functions derived from $2 \times 2$ evolutionary games under weak selection, as introduced in Section~\ref{sec:2player}. We consider $\psi_1(x) = \gamma(x - x^*)$, $\psi_2(x) = 0$ and $\rho_N=1/N$, with $-2 < \gamma < 2$ and $-1 < x^* < 2$. This parametrization captures all classical game types: dominance, coexistence, and coordination.

\begin{figure}[h!]
\begin{center}
\includegraphics[scale=0.55]{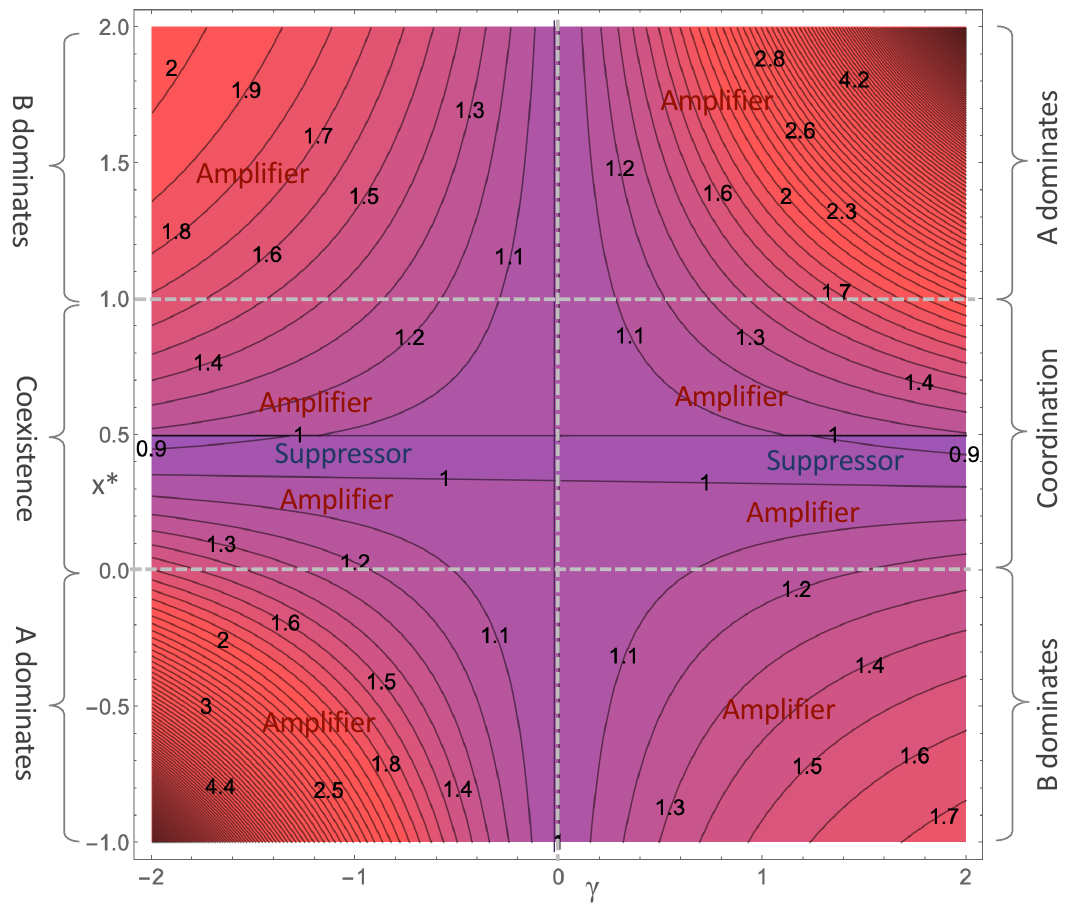}
\end{center}
\vspace*{-5mm}
\caption{
Contour plot of $\beta^\tau$ for the BD process, with fitness functions derived from a $2 \times 2$ evolutionary game under weak selection: $\psi_1(x) = \gamma(x - x^*)$ and $\psi_2(x) = 0$. The domain spans $-2 < \gamma < 2$ and $-1 < x^*< 2$. The color scale indicates whether the star graph acts as an amplifier (red) or suppressor (blue) relative to the complete graph.
\label{fig:contourBD}}
\end{figure}

Figure~\ref{fig:contourBD} presents a contour plot of $\beta^\tau$ for $\tau\neq 0$ in the BD process, over the domain $-2 < \gamma < 2$ and $-1 < x^* < 2$. The color scale indicates whether the star graph acts as an amplifier (red, ratio $> 1$) or suppressor (blue, ratio $< 1$) of selection, or \aipp (ratio $\approx 1$). Interestingly, the qualitative pattern is, in many ways, the reverse of the DB case. While the DB process tends to amplify selection in the coexistence and coordination regimes and suppress it in the dominance regimes, the BD process does the opposite: in both dominance regimes ($x^* < 0$ or $x^* > 2$), the star acts as an amplifier of selection. Meanwhile, in the coordination and coexistence regimes ($x^* \in (0,1)$), the star graph can act as a suppressor, \aipp, or amplifier, depending on $\gamma$ and $x^*$. As in the DB case, the transition between regimes occurs near the lines $x^* = 1/3$ and $x^* = 1/2$, but the amplification and suppression zones are inverted.

\section{Numerical examples}\label{sec:NE}

In this section, we present some numerical examples indicating that our approximations, given in Section~\ref{sec:odecandidate}, for both the BD and the DB processes are quite close to the exact fixation probability vectors, even for a relatively small population size $N$. For each example, we compare the exact fixation probability obtained from solving the equation $\bm{LF} =\bm{F}$ (feasible for not so large $N$) to our continuous approximate fixation probability. 

Our analysis begins with one simple example of a frequency-dependent fitness function, where both the birth and death fitnesses are linear.  Specifically,  in Figure~\ref{graficoN} we consider the BD process with $\psi_1(x) = 2(x-0.5)$ and $\psi_2(x) = x+1$,  which means that the birth fitness function is $\varphi_1(x)=1+2(x-0.5)/N$ and the death fitness function is $\varphi_2(x)=1+(x+1)/N$.

\begin{figure}[h!]
\begin{footnotesize}
\begin{center}
\begin{tabular}{c@{~\!}c}
\includegraphics[scale=0.33]{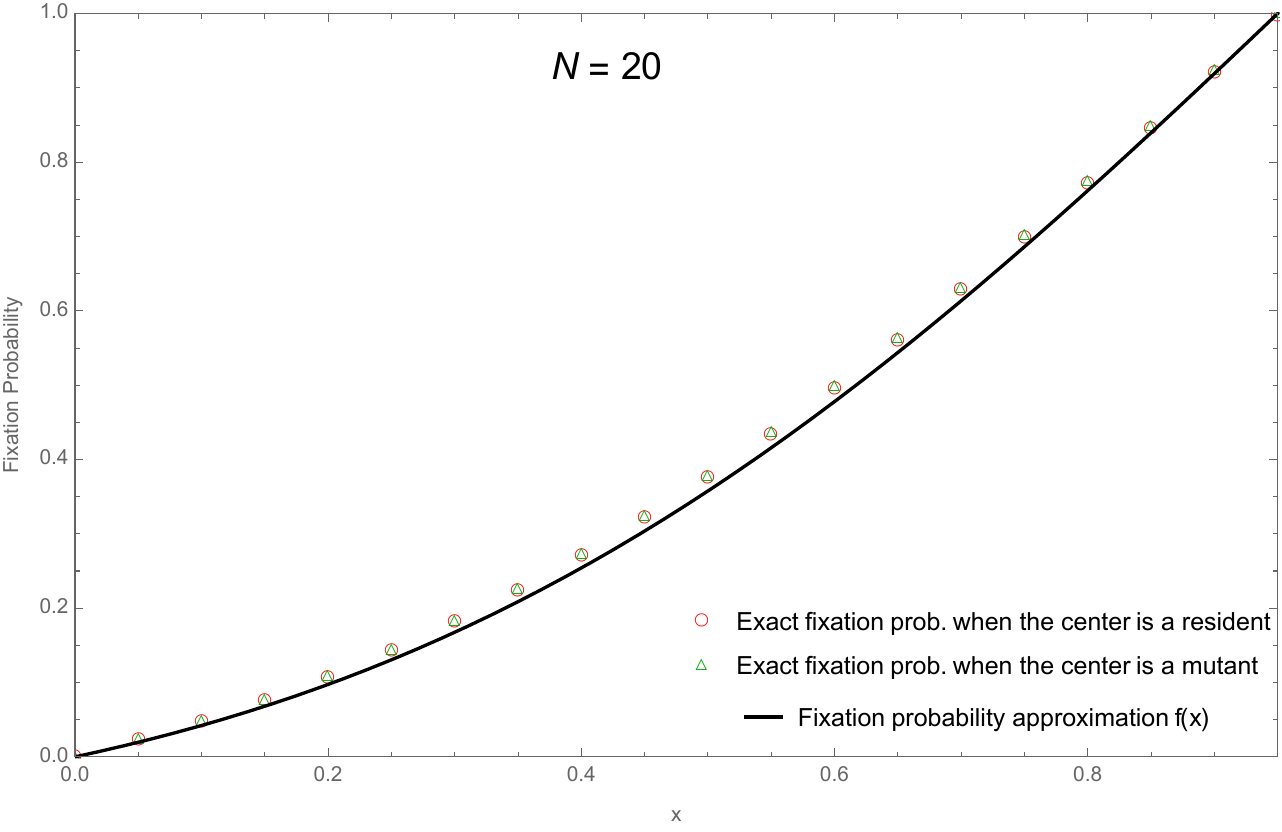}
& \includegraphics[scale=0.33]{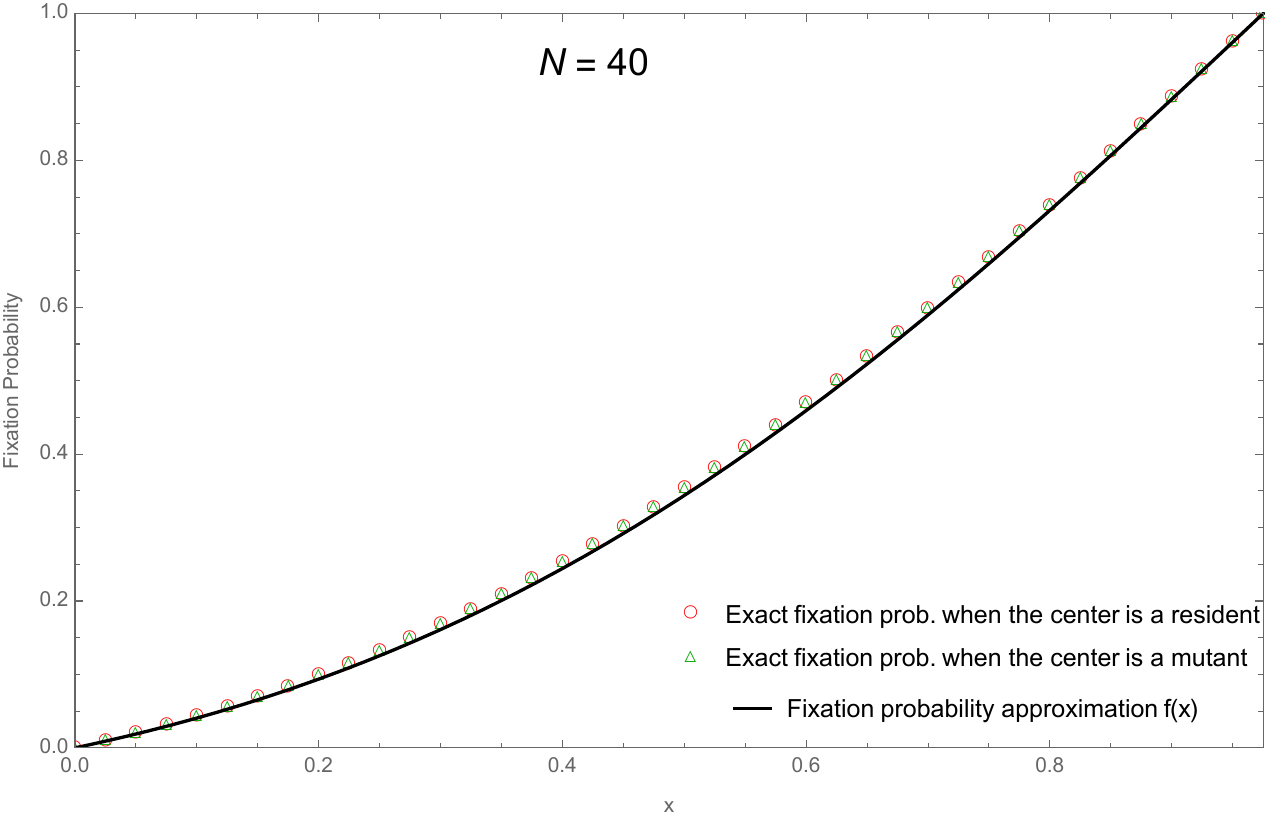}\\
\includegraphics[scale=0.33]{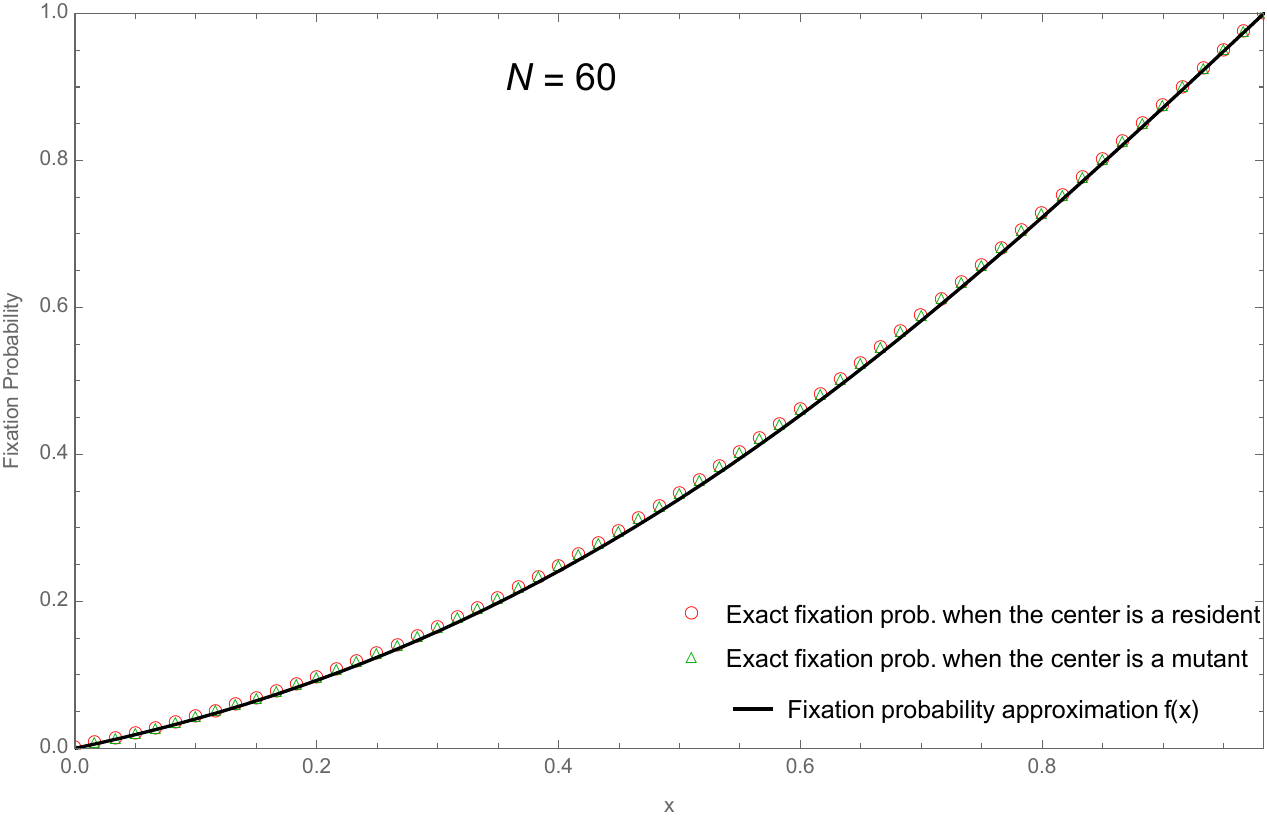}
&\includegraphics[scale=0.33]{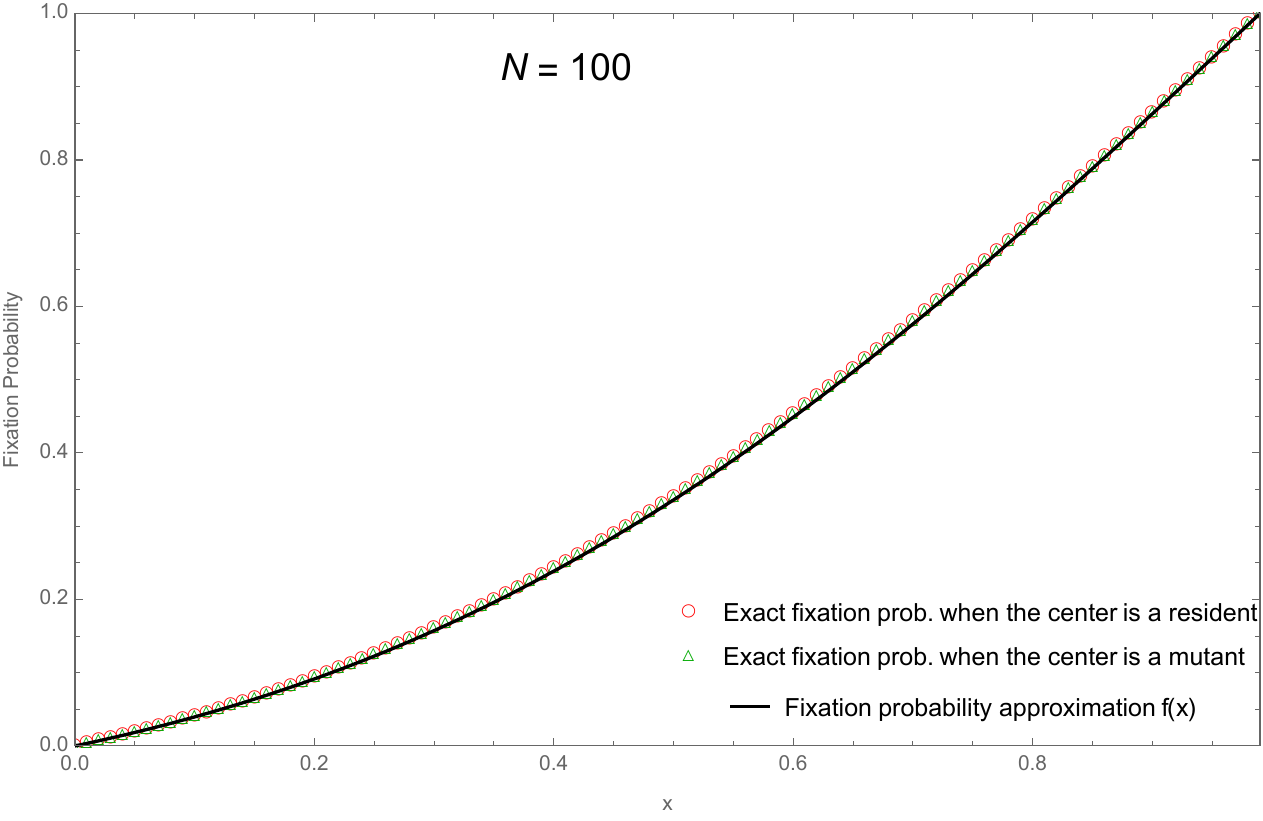}\\
\end{tabular}
\end{center}
\end{footnotesize}
\vspace*{-5mm}
\caption[Examples of our approximate fixation probability vector for the BD process]{The continuous approximation (in black) compared to the exact fixation probability for a population structured as a star graph in the BD process, for $N=20,40,60,100$. The green triangles represent the fixation probability of a population structured as a star with a mutant in the center, and in red circles represent the fixation probability when the center is a resident. In these examples, $\psi_1(x)=2(x-0.5)$ and $\psi_2(x)=x+1$.\label{graficoN}}
\end{figure}

Figure~\ref{graficoN} illustrates this example for the BD process with different population sizes, $N=20, 40, 60,$ and $100$. We can clearly observe that, as $N$ increases, our approximate fixation probability (represented as a solid line) gradually approaches the exact fixation probability (represented by red circles when the center is a resident, and by green triangles when the center is a mutant). To quantitatively assess the accuracy of our approximation, we calculate the error defined as $\Vert \bm{F}^{BD} - \bm{\overline{F}}^{BD}\Vert$, where $\bm{F}^{BD}$ denotes the exact fixation probability vector and $\bm{\overline{F}}^{BD}$ denotes the approximate fixation probability vector for the BD process. Table~\ref{tab1} presents the values of this error for different population sizes. As the population size $N$ grows, we observe a corresponding decrease in the error, confirming the convergence of our approximation.

\begin{table}[ht]
    \centering
    \caption{The fixation probability error for the BD process, i.e. $\Vert \bm{F}^{BD} - \bm{\overline{F}}^{BD}\Vert$ where $\bm{F}^{BD}$ denotes the exact fixation probability vector and $\bm{\overline{F}}^{BD}$ denotes the approximate fixation probability vector for the BD process.}\label{tab1}

    \begin{tabular}{l@{~~~~}c@{~~~~~}c@{~~~~~}c@{~~~~~}c@{~~~~~}c@{~~~~~}c}
        \toprule
        \textit{N} & ~~~20 & 40 & 60 & 80 & 100 & 200 \\
        \cmidrule(lr){1-1} \cmidrule(lr){2-7}
        Error & ~~~0.02036 & 0.01033 & 0.00693 & 0.00522 & 0.00418 & 0.0021 \\
        \bottomrule
    \end{tabular}
\end{table}

In Figure~\ref{DBN}, we compare the fixation probability in the DB process with different population sizes using the same fitness functions as those used in the BD process. Similarly to the BD process, we observe that the error decreases and it is of order smaller than $1/N$ as the population size $N$ increases, see Table~\ref{tab2}.

\vspace*{2mm}
\begin{figure}[h!]
\begin{footnotesize}
\begin{center}
\begin{tabular}{c@{~\!}c}
\includegraphics[scale=0.315]{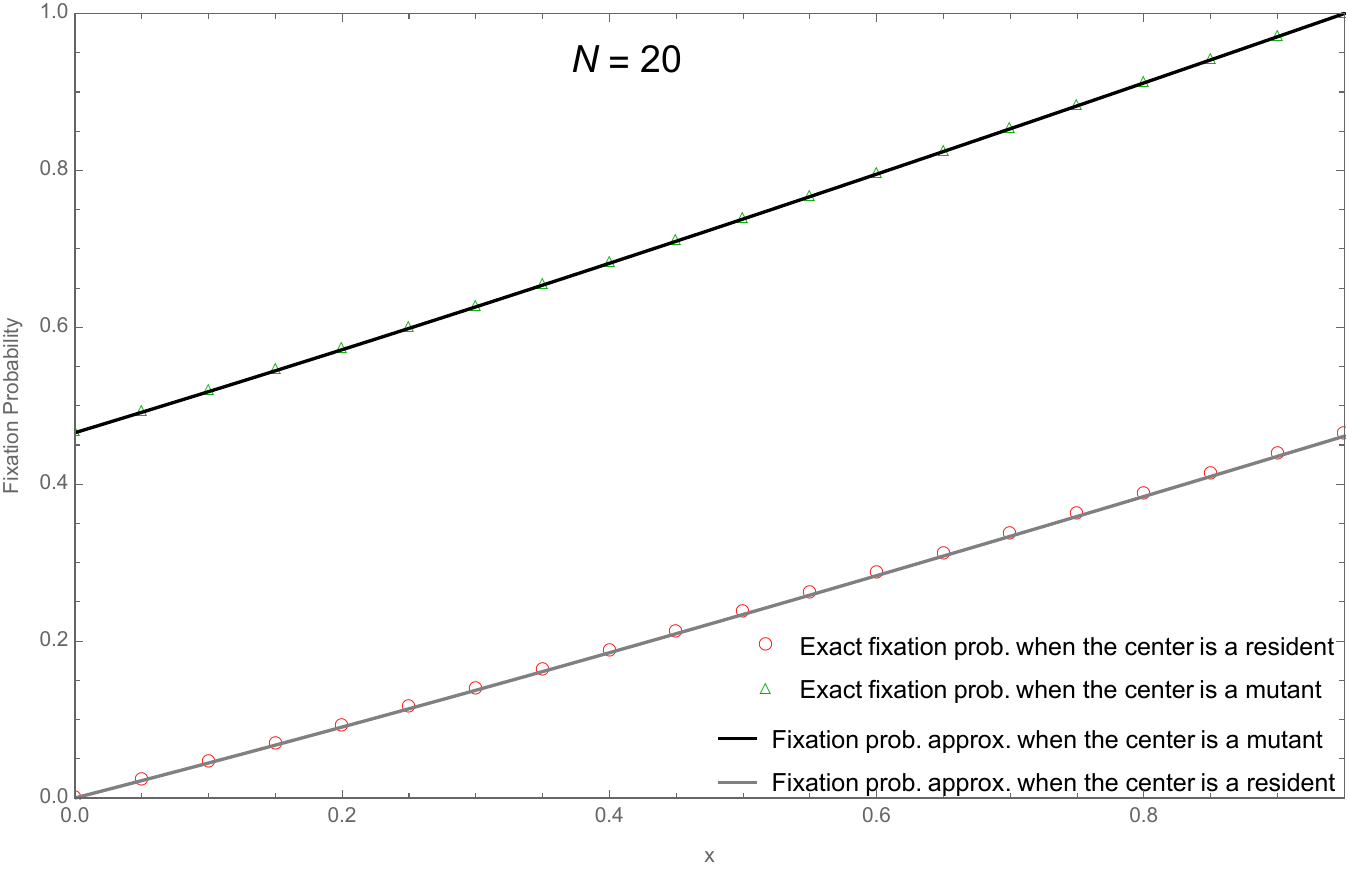}
& \includegraphics[scale=0.315]{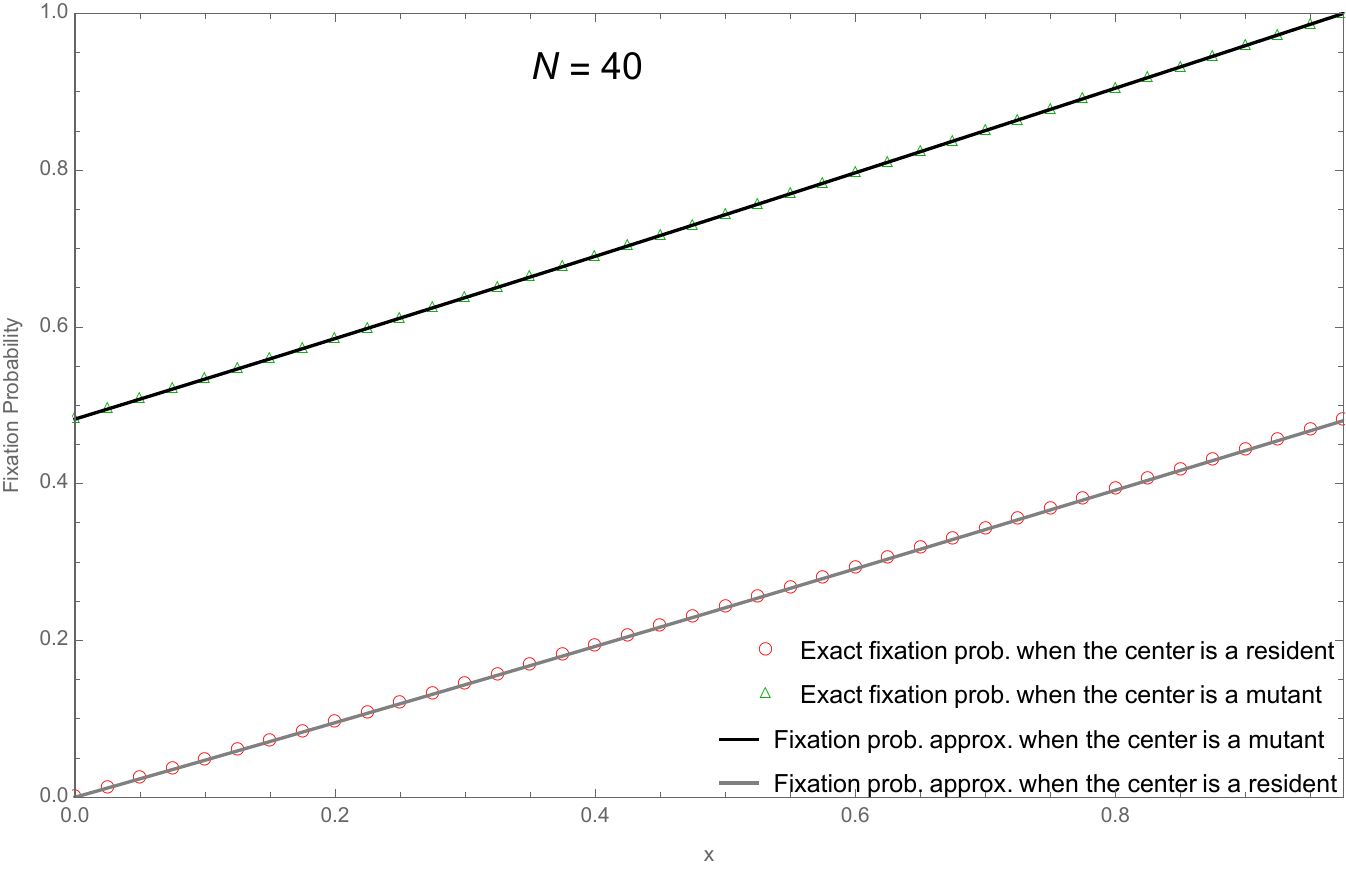}\\
\includegraphics[scale=0.315]{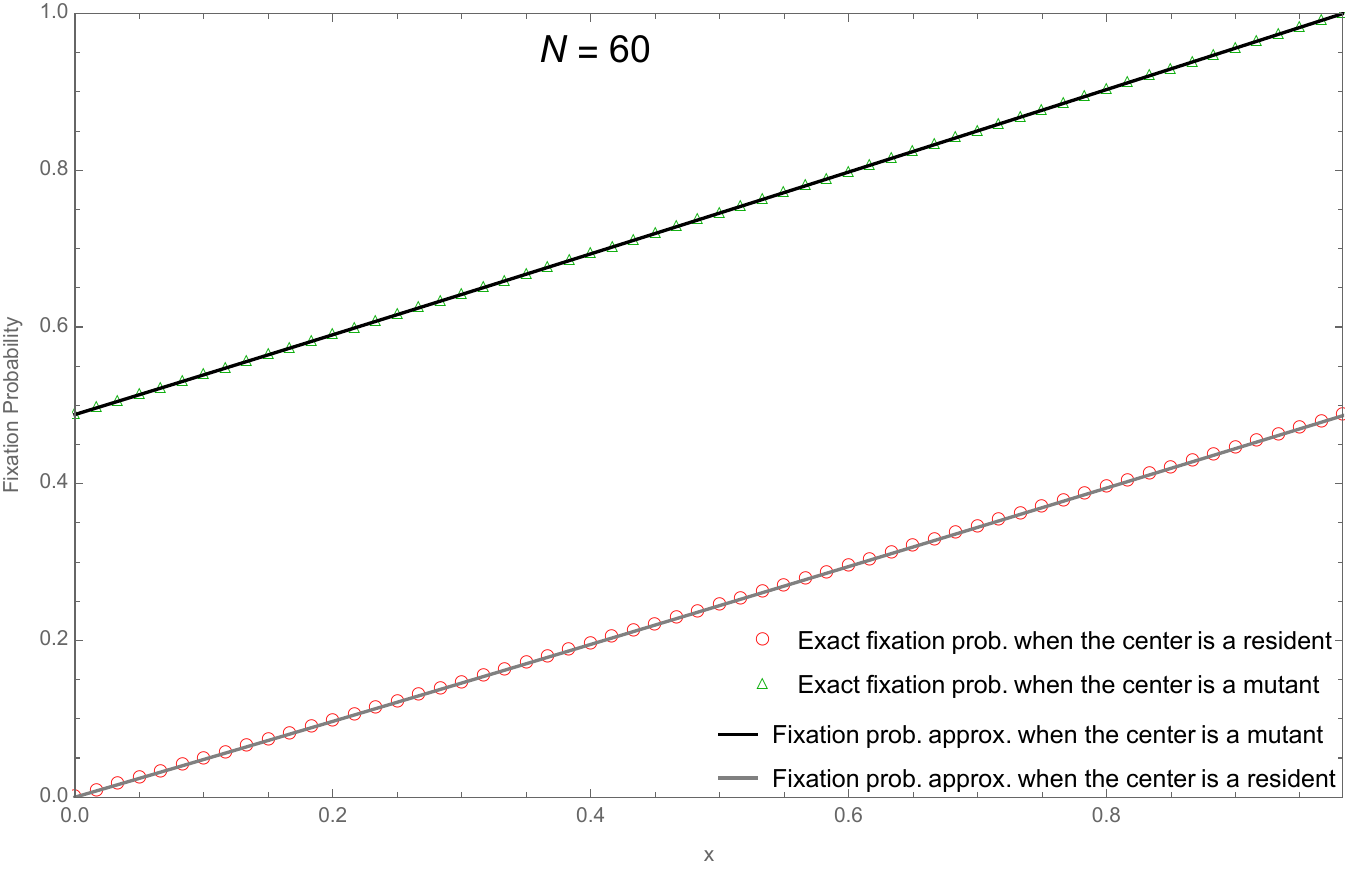}
&\includegraphics[scale=0.315]{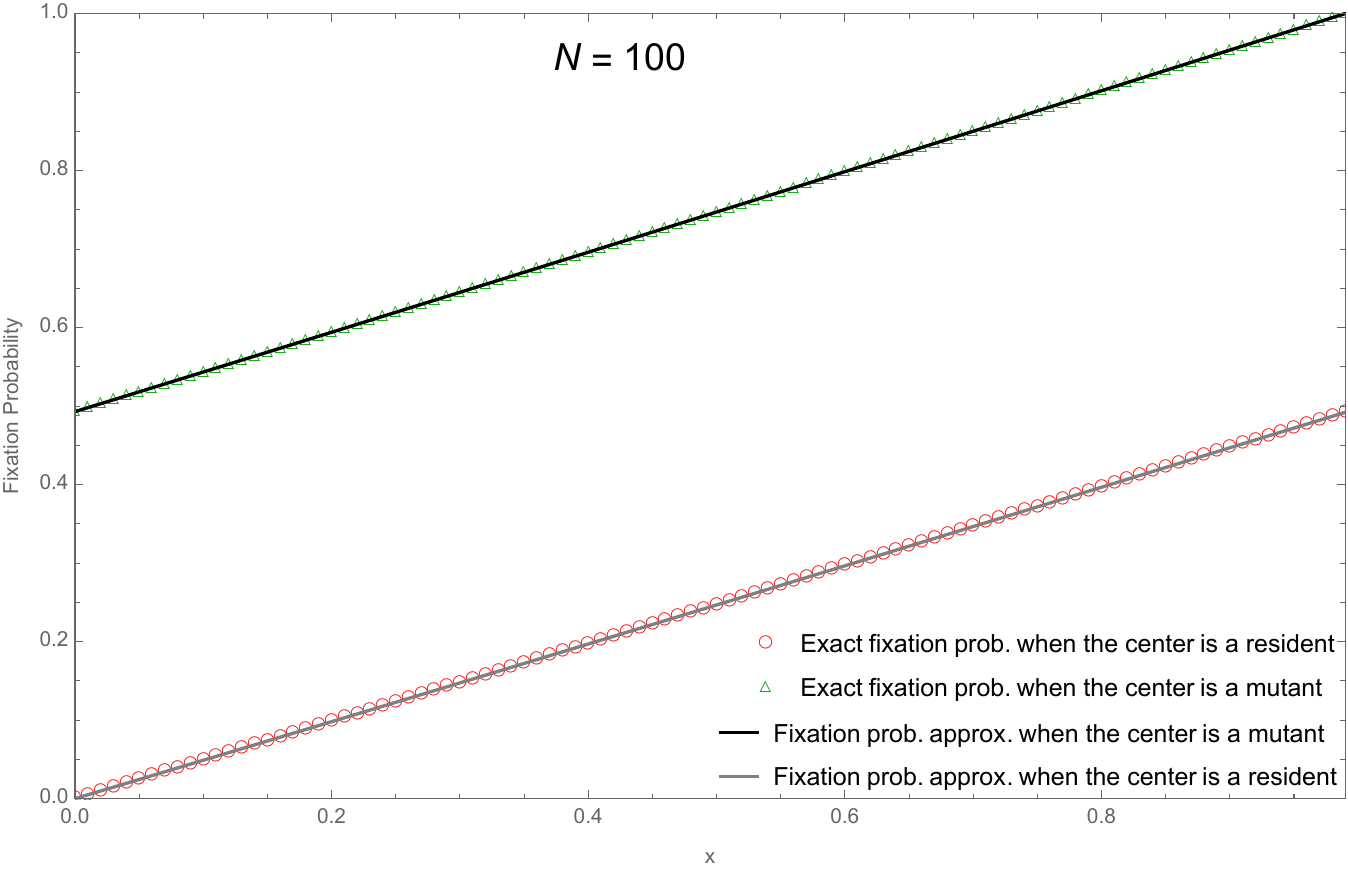}
\end{tabular}
\end{center}
\end{footnotesize}
\vspace*{-5mm}
\caption[Examples of our approximate fixation probability vector for the DB process]{The continuous approximation (in black) compared to the exact fixation probability (in green triangles) for a population structured as a star graph when the center is a mutant, in the DB process. When the center is a resident,  the continuous approximation and the exact fixation probability are given in gray and red circles, respectively. The fitness functions are $\psi_1(x)=2(x-0.5)$ and $\psi_2(x)=x+1$, for $N=20,40,60,100$. \label{DBN}}
\end{figure}

\begin{table}[ht]
    \centering
    \caption{The fixation probability error for the DB process, i.e. $\Vert \bm{F}^{DB} - \bm{\overline{F}}^{DB}\Vert$ where $\bm{F}^{DB}$ denotes the exact fixation probability vector and $\bm{\overline{F}}^{DB}$ denotes the approximate fixation probability vector for the DB process.}\label{tab2}%

    \begin{tabular}{l@{~~~~}c@{~~~~~}c@{~~~~~}c@{~~~~~}c@{~~~~~}c@{~~~~~}c}
        \toprule
        $N$  &~~~20 & 40 & 60 &80 &100 &200\\
        \cmidrule(lr){1-1}\cmidrule(lr){2-7}
        Error &~~~0.00284 &0.00073 &0.00033 &0.00019 &0.00012 &0.00003\\
        \bottomrule
    \end{tabular}
\end{table}

Furthermore, the comparison of Figures~\ref{graficoN} and \ref{DBN} highlights a clear distinction between the mechanisms of the BD and DB processes.
In the DB process, the presence of a mutant or resident at the center leads to a drastic difference. When a mutant starts from the center, the invasion probability is close to $1/2$, while an invasion starting from a leaf diminishes significantly and approaches zero.

In contrast to previous results in the literature, which just focused on linear or constant fitness functions, our study extends its scope to include a wider range of frequency-dependent fitness functions. For instance,in the BD process, we compare four distinct fitness function types: constant, linear, polynomial, and Gaussian, as shown in Figure~\ref{graficoBD}.

\vspace*{2mm}
\begin{figure}[h!]
\begin{center}
\begin{tabular}{c@{\!~}c}
\includegraphics[scale=0.373]{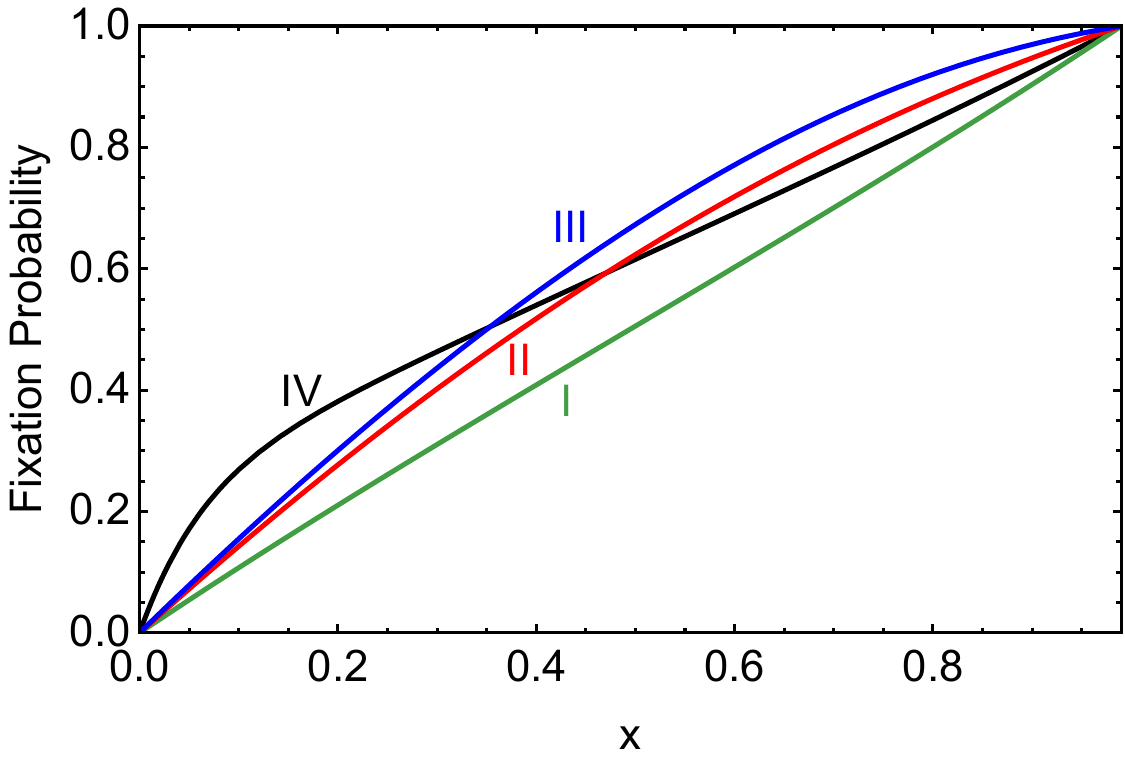}
&\includegraphics[scale=0.373]{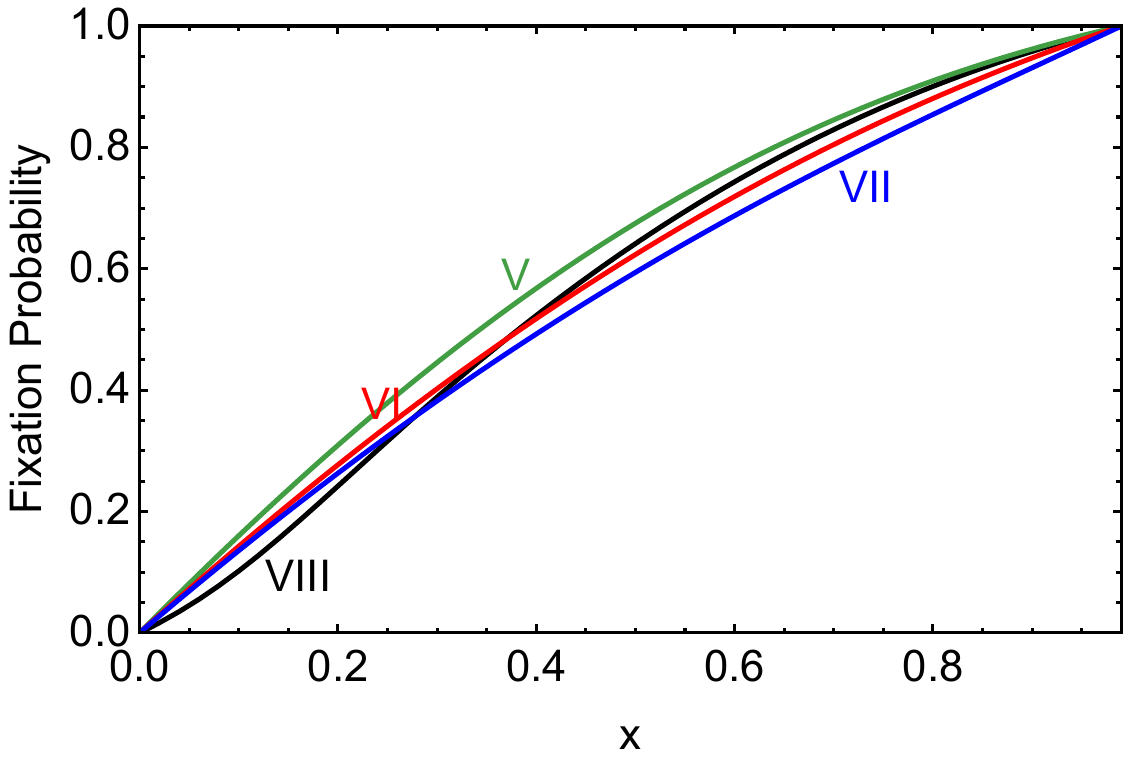}
\end{tabular}
\end{center}
\vspace*{-5mm}
\caption[Comparing different fitness functions in the BD process]{Examples of the approximate fixation probability for various fitness functions in the BD process. On the left, we have $\psi_2(x)=0.5+x$ and four different birth fitnesses: (I) constant $\psi_1(x)=0.5$; (II) linear $\psi_1(x)=0.5+x$; (III) polynomial $\psi_1(x)=0.5+x+x^3+x^5$; and (IV) Gaussian $\psi_1(x)=0.5+6 e^{-40x^2}$. On the right, we have $\psi_1(x)=0.5+x$ and four different death fitnesses: (V) constant $\psi_2(x)=0.5$; (VI) linear $\psi_2(x)=0.5+x$; (VII) polynomial $\psi_2(x)=0.5+x+x^3+x^5$; and (VIII)  Gaussian $\psi_2(x)=0.5+6 e^{-40x^2}$. In all of these scenarios, the population size is $N=100$.
\label{graficoBD}}
\end{figure}

On the left-hand side, we examine the approximate fixation probability across varying birth fitness functions: (I) constant $\psi_1(x)=0.5$; (II) linear $\psi_1(x)=0.5+x$; (III) polynomial $\psi_1(x)=0.5+x+x^3+x^5$; and  Gaussian $\psi_1(x)=0.5+6 e^{-40x^2}$. In each of these scenarios, $\psi_2(x)=0.5+x$. Conversely, on the right-hand side, we shift our focus, maintaining birth fitness as $\psi_1(x)=0.5+x$, while manipulating death fitness as follows: (V) constant $\psi_2(x)=0.5$; (VI) linear $\psi_2(x)=0.5+x$; (VII) polynomial $\psi_2(x)=0.5+x+x^3+x^5$; and (VIII) Gaussian $\psi_2(x)=0.5+6 e^{-40x^2}$. 

Note that the Gaussian fitness function is greater than the polynomial fitness function for $x\in(0,u)$, where $u\approx 0.27$, and eventually falls below the polynomial fitness for $x\in(u,1]$. On the other hand, the fixation probability is a function of fitness, such that the higher the birth fitness at a state $x$, the greater the chance of fixation starting at $x$. Therefore, we anticipate that the difference in fixation probabilities under Gaussian fitness and polynomial fitness changes sign from positive to negative. It is important to observe that this change of sign does not occur exactly at $u$, but rather at $u'\geq u$, which can be approximated as $u'\approx 0.35$ when comparing (III) and (IV) cases. With this perspective in view, we also expect the fixation probability in case (III) to always exceed the fixation probability in case (II), and the latter to always exceed that in case (I) when starting at the same state $x$. 

On the contrary, in the BD process, a higher death fitness at a state $x$ leads to  a smaller chance of fixation initiating at $x$. Consequently, on the right-hand side, the situation is reversed. More precisely, in case (V), the fixation probability is greater than that in case (VI), and the latter is greater than the probability in case (VII) when starting from the same state $x$. On the other hand, a sign change for the difference of fixation probabilities under Gaussian and polynomial death fitnesses occurs at approximately $u'\approx 0.28$, as observed when comparing (VII) and (VIII). 

The comparison between the left and right sides of Figure~\ref{graficoBD} serves to emphasize the points discussed in Section~\ref{sec:equivalence}. As mentioned, one cannot approximate the fixation probability of $A$ by switching the birth fitness of $B$ and the death fitness of $A$ and vice versa as seen in the case of the complete graph. We also note that, the difference of the fixation probabilities for any two cases on the left-hand side is greater than the difference of fixation probabilities for any two analogous cases on the right-hand side.

We now present some similar examples for the DB process in Figure~\ref{graficoDB}. On the left-hand side, we explore the approximate fixation probability for different birth fitness functions: (I) constant $\psi_1(x)=5$; (II) linear $\psi_1(x)=5+10x$; (III) polynomial $\psi_1(x)=5+10x+10x^3+10x^5$; and (IV) Gaussian $\psi_1(x)=5+60 e^{-40x^2}$. In each of these cases, $\psi_2(x)=5+10x$. On the other hand, on the right-hand side, our focus shifts to maintaining birth fitness as $\psi_1(x)=5+10x$, while adjusting death fitness as follows: (V) constant $\psi_2(x)=5$; (VI) linear $\psi_2(x)= 5+10x$; (VII) polynomial $\psi_2(x)=5+10x+10x^3+10x^5$; and (VIII) Gaussian $\psi_2(x)=5+60 e^{-40x^2}$. 

\vspace*{2mm}
\begin{figure}[h!]
\begin{center}
\begin{tabular}{c@{\!~~}c}
\includegraphics[scale=0.235]{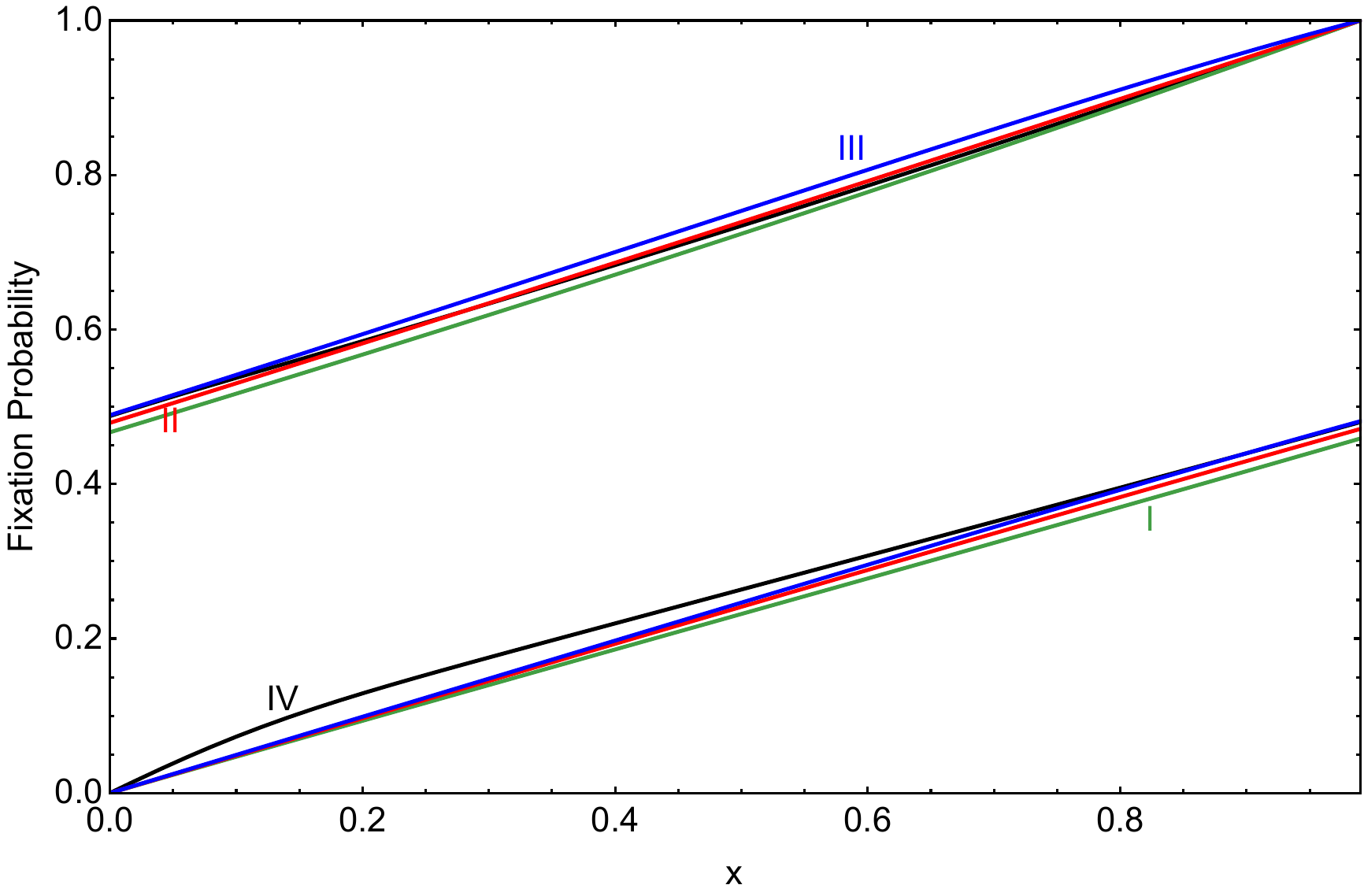}
&\includegraphics[scale=0.235]{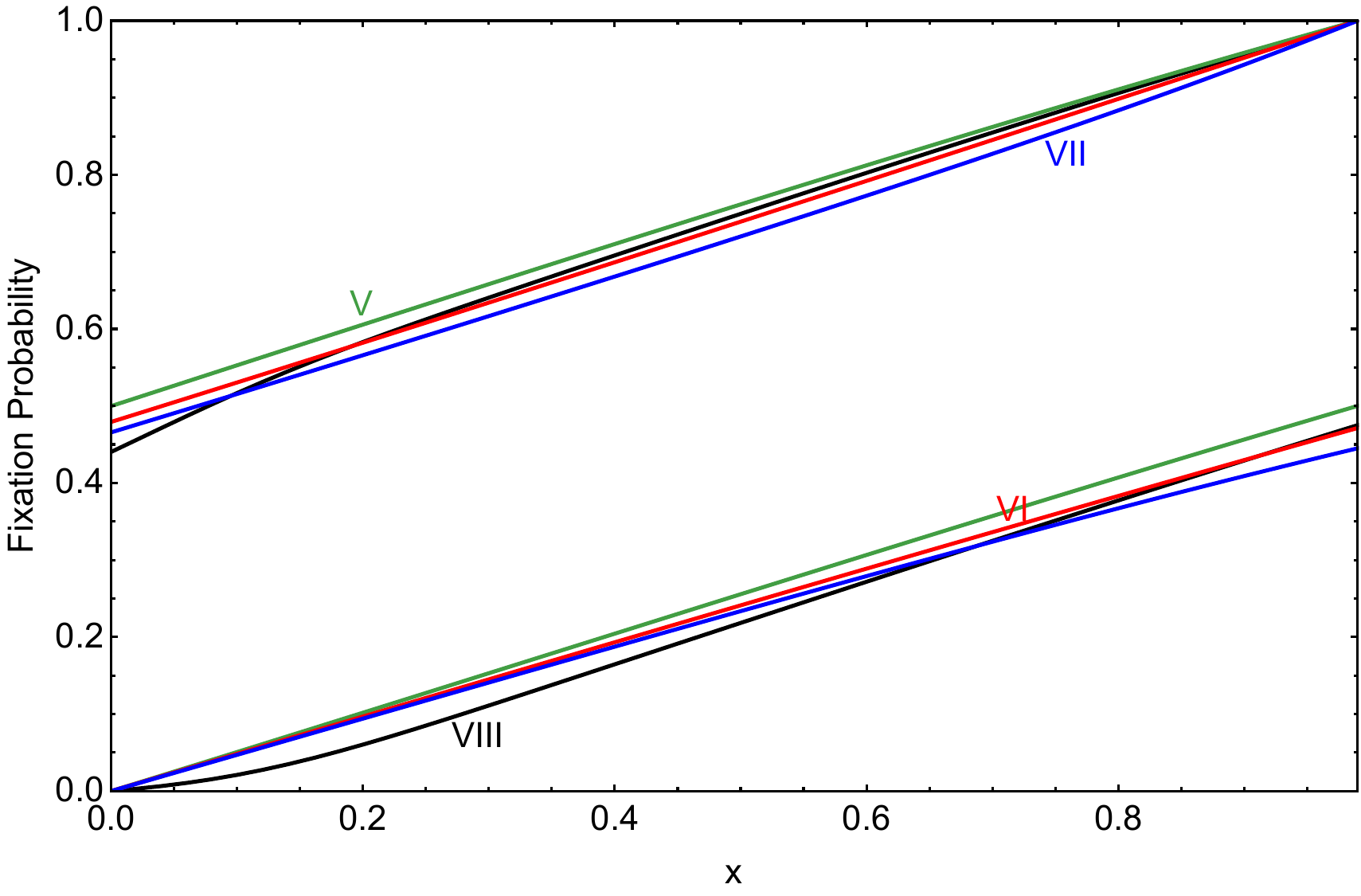}
\end{tabular}
\end{center}
\vspace*{-5mm}
\caption[Comparing different fitness functions in the DB process]{Examples of the approximate fixation probability for various fitness functions in the DB process. On the left, we have $\psi_2(x)=5+10x$ and four types of birth fitness: (I) constant $\psi_1(x)=5$; (II) linear $\psi_1(x)=5+10x$; (III) polynomial $\psi_1(x)=5+10x+10x^3+10x^5$; and (IV) Gaussian $\psi_1(x)=5+60 e^{-40x^2}$. On the right, we have $\psi_1(x)=5+10x$ and four types of death fitness: (V) constant $\psi_2(x)=5$; (VI) linear $\psi_2(x)= 5+10x$; (VII) polynomial $\psi_2(x)=5+10x+10x^3+10x^5$; and (VIII) Gaussian $\psi_2(x)=5+60 e^{-40x^2}$. In all of these scenarios, the population size is $N=100$.
\label{graficoDB}}
\end{figure}

As in the BD case, the fixation probability for the DB process depends on its fitness. More specifically, a higher birth fitness at a state $x$ corresponds to an increased chance of fixation starting at $x$, while a higher death fitness at $x$ leads to a smaller chance of fixation starting at that state. Therefore, as in the BD process, we expect the fixation probability in scenario (III) consistently exceeds that in scenario (II), and the latter always surpasses the probability in scenario (I) when initiating from the same state $x$. Furthermore, we expect that the fixation probability in scenario (V) will always be greater than that in scenario (VI), with the latter being consistently higher than the probability in scenario (VII) under the same starting state $x$.

 This extended analysis broadens our understanding beyond the limitations of linear or constant fitness functions prevalent in prior research, thereby shedding light on the dynamics of more complicated frequency-dependent fitness functions.

\section{Discussion}

In contrast to previous results in the literature, which only consider constant or linear fitness functions (cf. \cite{nowak1,br2008,Houchmandzadeh,monk2014martingales, kkk, hindersin2015most}), this paper provides continuous approximations for the fixation probability of large populations living on the star graph, considering general frequency-dependent (smooth) birth and death fitness functions under the weak-selection regime.  As an application, we compare various types of fitness functions, including constant, linear, polynomial, and Gaussian functions, as presented in Section~\ref{sec:NE}. The success of our methodology relies on the assumption of a large but finite population and the high symmetry of the population structure, such as that of the star graph. Due to the symmetry and automorphism of the star graph, the dynamics and fixation probability can be described by a system of recursive equations, with the number of equations being twice the population size. As we seek a continuous approximation, the analysis reduces to a system of two ordinary differential equations. In general, this method can be extended to populations with other structures, as long as the graph representing the interactions of the population is relatively simple, allowing the fixation probability to be expressed as a system of a relatively small number of recursive equations.  For more on the number of mutant-resident configurations, see ~\cite{br2008}. Of course, the technical details for determining the order of error will vary for each specific graph and update mechanism. However, we expect that it will be easy to verify through numerical examples that the error of the approximation is small.

As we have seen in this study, the details for determining the order of the error of the approximation differ between the BD and the DB processes. In the DB case, we proved that the approximation error is of order $1/N$, where $N$ represents the size of the population. That is, the larger the population, the smaller the error. In the BD case, we found a very similar approximate fixation probability as in the complete graph presented in~\cite{chalubsouza16}.  In both updating mechanisms, throughout various numerical examples, we illustrated that our approximations are quite close to the exact fixation probability even for populations that are not very large.

While the methodology used to derive our continuous approximation of the fixation probability is similar for both the BD and DB processes, the resulting approximations exhibit distinct forms. This fact is due to fundamental differences in how the two update rules interact with the star graph structure. The star can be viewed as a structured population in which the $N-1$ leaves only interact indirectly through the central node. In the BD process, the center has the same individual reproduction probability (depending on fitness) as any single leaf, making it much less likely to be chosen for reproduction than the leaves collectively. As a result, it often changes state numuruse times before the number of mutants among the leaves increases or decreases. This creates a dynamic where changes in mutant frequency are driven by rare, coordinated events. In contrast, in the DB process, the death event occurs first, and the replacing individual is chosen proportionally to fitness. As a result, the type occupying the center can repeatedly convert neighboring leaves before being selected for death. This creates a process where the central node exerts strong directional influence on the leaf population, and fixation dynamics are driven by asymmetric transitions favoring the type in the center. Because of this directional asymmetry, the effective dynamics meaningfully differ between BD and DB, and the respective approximations must be tailored to capture these distinct behaviors.

In this paper, we only consider the simplest case where we have undirected connections between two vertices. Note that a straightforward generalization is to consider a directed star graph in which all arcs heading to one of the leaves have the same weight, say $w_1$, and all arcs heading to the center of the star have another weight, say $w_2$.  
It is evident that we can approach this scenario similarly and achieve similar results to those presented in this paper. However, if we assign distinct weights to each arc of the star, despite its simple structure, we would forfeit the advantage of the star graph's symmetry. This is because, to compute the fixation probability, we must have knowledge of all the weights, preventing us from simplifying the analysis into a system of recursive equations where the number of equations is merely twice the population size. 

As mentioned above, our fixation probability approximation for the BD process, when the population resides on a star graph, is very similar to the approximation provided by~\cite{chalubsouza16} for populations living on complete graphs. Consequently, we can infer that the asymptotic qualitative behavior of a population structured as a star graph mirrors that of a population structured as a complete graph. This comparison is highlighted in Section~\ref{sec:2player}, especially when the fitness is determined by a linear function derived from a two-player game under weak-selection conditions. Furthermore, we specifically investigated whether the star graph acts as an amplifier or a suppressor of selection, providing a detailed analysis of invasion probabilities in Section~5. This section includes a threshold-based criterion for distinguishing between amplification and suppression regimes, depending on selection intensity and the initial position of the mutant. While we focused on fixation probabilities throughout the paper, our analytical framework can also be adapted to study fixation times. Extending our methodology in that direction could offer new insights into the expected time to fixation in structured populations.

Beyond its traditional applications in population genetics, the Moran process on graphs, which includes both birth-death and death-birth update mechanisms, plays an essential role in comprehending the dynamics of cancer evolution. By modeling the evolutionary trajectories and interactions between normal and tumor cell populations within spatially constrained environments, the Moran process on graphs offers valuable insights into tumor growth dynamics
~\citep{kkk,nanda2017,coggan2022,broom2022,wolfl2022}. These insights are crucial for understanding the dynamics of stem cells and the significance of structures capable of either suppressing somatic evolution leading to cancer or amplifying selection pressures~\citep{nowak1,Hindersin16,Tkadlec21,Traulsen22}. Furthermore, in this paper, we enhanced the existing models in the literature by offering new approximate fixation probabilities for a population residing on a star graph, considering general birth and death fitness functions. This enhancement provides greater flexibility in selecting birth and death fitness parameters, thereby improving the model's fit when applied to real data analyses. 

\vspace*{0.5cm}
\paragraph{Acknowledgments}
This paper was completed while the first author was affiliated with Columbia University. We thank Professors Mark Broom, Fabio A.C.C. Chalub, Jo\~{a}o Meidanis, Armando G. Neves and Ralph C. Teixeira for their comments on an earlier version of the work.
PHdS was supported in part by FAPERJ grant E-26/200.258/2015. MOS thanks the support by CAPES - Brazil - Finance code 001 and the support by FAPERJ-Brazil through the grant E-26/210.440.2019.

\begin{appendices}

\section{Proof of Theorem~\ref{rDB}}\label{secA1}

This section is devoted to the proof of Theorem~\ref{rDB}.  First, note that we can rewrite the matrix $\bm{L}$ as follows
\[
\bm{L}=\left[ \begin{array}{ccc}
1&\textbf{0}^t &0\\
\bm{\alpha}&\bm{\widetilde{L} }&\bm{\beta}\\
0&\textbf{0}^t&1
\end{array}\right] 
\]
where $\textbf{0}$ is the null vector of dimension $2N-2$, $\bm{\widetilde{L}}$ is a matrix with dimension $2N-2\times 2N-2$, $\bm{\alpha}$ is a vector of dimension $2N-2$ such that $\alpha_1=c_{\zbar}$, $\alpha_{N}=a_{0}$ and $\alpha_i=0$ for all $i$ different than  $1$ and $N$, and $\bm{\beta}$ is a vector of dimension $2N-2$ such that $\beta_{N-1}=d_{(N-1)\zbar}$, $\beta_{2N-2}=b_{(N-2)\zbar}$ and $\beta_i=0$ for all $i$ other than $N-1$ and $2N-2$.  The following result from~\cite{chalubsouza} shows that there exists a unique fixation probability vector $\bm{F}^t=[0~~\bm{\widetilde{F}}^t~~1]^t$, where $\bm{\widetilde{F}}$ is a vector of dimension $2N-2$, since $p^N_{2,0}=0$ and $p^N_{1,1- \zbar}=1$. 

\begin{proposition}[\cite{chalubsouza}]\label{fixation}
Let $\bm{\widetilde{M}}=\bm{\widetilde{L}}-\bm{\widetilde{I}}$. Then, there exists a unique vector $\bm{\widetilde{F}}\in \rr^{N-2}$, with $0<(\widetilde{F})_i<1$, such that $\bm{F}^t=[0~~\bm{\widetilde{F}}^t~~1]^t$, with $\bm{M F}=\bm{0} $. It satisfies 
\[
\bm{\widetilde{F}}=-\bm{\widetilde{M}}^{-1}\bm{\beta}.
\]
\end{proposition}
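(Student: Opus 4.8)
The plan is to recognize $\bm{L}$ as the transition matrix of an absorbing Markov chain on $2N$ states, with states $1$ and $2N$ absorbing and the other $2N-2$ transient, and then to reduce the eigenrelation $\bm{MF}=\bm{0}$ together with the boundary data $p^{N}_{2,0}=0$ and $p^{N}_{1,1-\zbar}=1$ to the single linear system $\bm{\widetilde{M}}\bm{\widetilde{F}}=-\bm{\beta}$ with nonsingular coefficient matrix.

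First I would record the identities $a_x(z)+b_x(z)=1$ and $c_x(z)+d_x(z)=1$, both immediate from (\ref{coefBD}) and (\ref{coefDB}) because in each case the two numerators add up to the common denominator; moreover, since $\psi_1,\psi_2\ge 0$, all four coefficients evaluated at $z=\zbar$ are strictly positive. Thus every row of $\bm{L}$ sums to $1$, so $\bm{L}$ is stochastic, while its first and last rows are $[1~\textbf{0}^t~0]$ and $[0~\textbf{0}^t~1]$, making states $1$ and $2N$ absorbing. Substituting $\bm{F}=[0~~\bm{\widetilde{F}}^t~~1]^t$ into $\bm{MF}=\bm{0}$ with the block form of $\bm{L}$, the first and last scalar equations collapse to $0=0$ and $1=1$, while the middle block gives $\bm{\alpha}\cdot 0+(\bm{\widetilde{L}}-\bm{\widetilde{I}})\bm{\widetilde{F}}+\bm{\beta}\cdot 1=\bm{0}$, that is, $\bm{\widetilde{M}}\bm{\widetilde{F}}=-\bm{\beta}$. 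Hence existence, uniqueness, and the formula $\bm{\widetilde{F}}=-\bm{\widetilde{M}}^{-1}\bm{\beta}$ all follow at once, provided $\bm{\widetilde{M}}$ is invertible.

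The heart of the argument, and the step I expect to be the main obstacle, is the invertibility of $\bm{\widetilde{M}}=\bm{\widetilde{L}}-\bm{\widetilde{I}}$. I would establish it by showing that $\bm{\widetilde{L}}$ is substochastic with spectral radius $\rho(\bm{\widetilde{L}})<1$. From the sparsity pattern of $\bm{L}$ one reads off that from every transient state there is a path of positive-probability transitions to an absorbing state: a center-mutant state climbs the upper row $p^{N}_{1,x}\to p^{N}_{1,x+\zbar}\to\cdots$ until it reaches state $2N$, whereas a center-wild-type state descends the lower row $p^{N}_{2,x}\to p^{N}_{2,x-\zbar}\to\cdots$ until it reaches state $1$, the crossing weights $a_x(\zbar)$ and $d_x(\zbar)$ linking the two rows. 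Since absorption is therefore certain and happens in finitely many steps from every transient state, $\bm{\widetilde{L}}^{k}\to\bm{0}$; equivalently $\bm{\widetilde{I}}-\bm{\widetilde{L}}$ is a nonsingular M-matrix whose inverse is the convergent Neumann series $\sum_{k\ge 0}\bm{\widetilde{L}}^{k}$, so $\bm{\widetilde{M}}=-(\bm{\widetilde{I}}-\bm{\widetilde{L}})$ is nonsingular.

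Finally, for the strict bounds $0<(\widetilde{F})_i<1$ I would invoke the probabilistic meaning of the solution: $(\widetilde{F})_i=(-\bm{\widetilde{M}}^{-1}\bm{\beta})_i=\big(\sum_{k\ge 0}\bm{\widetilde{L}}^{k}\bm{\beta}\big)_i$ is precisely the probability that the conditioned chain, started at transient state $i$, is eventually absorbed at state $2N$, where the fixation probability equals $1$. Because from each transient state both absorbing states are reachable with positive probability---by climbing the upper row to $2N$ and by descending the lower row to $1$---this absorption probability lies strictly between $0$ and $1$, which yields the componentwise bounds and completes the proof.
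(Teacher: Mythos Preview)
The paper does not actually prove this proposition: it is quoted from Chalub and Souza~\cite{chalubsouza} and used as a black box, so there is no in-paper argument to compare against. Your argument is correct and is exactly the standard absorbing-chain proof one expects: block-reduce $\bm{MF}=\bm{0}$ with the boundary data to $\bm{\widetilde{M}}\bm{\widetilde{F}}=-\bm{\beta}$, show $\rho(\bm{\widetilde{L}})<1$ via certain absorption so that $\bm{\widetilde{I}}-\bm{\widetilde{L}}$ inverts by a Neumann series, and read off $0<(\widetilde{F})_i<1$ from the probabilistic interpretation of $\sum_{k\ge 0}\bm{\widetilde{L}}^k\bm{\beta}$ as absorption probabilities.

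One small correction: your justification ``since $\psi_1,\psi_2\ge 0$'' for the strict positivity of $a_x(\zbar),b_x(\zbar),c_x(\zbar),d_x(\zbar)$ is not consistent with the paper's own usage---the numerical examples in Section~\ref{sec:NE} take $\psi_1(x)=2(x-0.5)$, which changes sign. The positivity you need follows more robustly from the explicit formulas (\ref{coefBD})--(\ref{coefDB}): for $z=\zbar$ small the terms $z\psi_i$ are lower-order perturbations of the leading $z$ or $1-z$ in each numerator and denominator, so all four coefficients are strictly between $0$ and $1$ for all sufficiently large $N$, regardless of the signs of $\psi_1,\psi_2$. With that adjustment the connectivity-to-both-absorbing-states argument goes through unchanged. (Incidentally, the dimension ``$\rr^{N-2}$'' in the stated proposition is a typo in the paper; it should be $\rr^{2N-2}$, and your proof implicitly treats it as such.)
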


So we are reduced to estimate $\bm{\widetilde{F}}$. To this end, let $\widetilde{\overline{F}}^{DB}_{i-1}=\overline{F}^{DB}_i$ for $i=2,\dots,2N-1$  as defined in Section~\ref{odeDB}. Using Proposition~\ref{fixation}, we get
\begin{equation}\label{erro}
\Vert\bm{\widetilde{F}} - \bm{\widetilde{\overline{F}}} \Vert=\Vert-\bm{\widetilde{M}}^{-1}\bm{\beta} - \bm{\widetilde{\overline{F}}} \Vert=\Vert\bm{\widetilde{M}}^{-1}(-\bm{\beta} - \bm{\widetilde{M}}\bm{\widetilde{\overline{F}}}) \Vert\leq \Vert\bm{\widetilde{M}}^{-1}\Vert\Vert \bm{\beta}+\bm{\widetilde{M}}\bm{\widetilde{\overline{F}}}\Vert.
\end{equation}

Hence, in order to estimate the error $\Vert\bm{\widetilde{F} }- \bm{\widetilde{\overline{F}}} \Vert$, we need appropriate upper bounds for $\Vert\bm{\widetilde{M}}^{-1}\Vert$ and $\Vert \bm{\beta}+\bm{\widetilde{M}\widetilde{\overline{F}}}\Vert$.  This is done in Proposition~\ref{cotaM} and Proposition~\ref{MF}. 
To see the details for the DB process, we write $\bm{\widetilde{M}}=\zbar\bm{\widetilde{M}}_0+\zbar^2\bm{\widetilde{M}}_1$, where $\bm{\widetilde{M}}_0$ is the matrix defined by
\begin{equation}\label{M0}
\left(\widetilde{M}_0\right)_{ij}=\left\lbrace \begin{array}{ll}
-1/\zbar, & \text{ if }  i=j; \\
-1+1/\zbar, &  \text{ if } j= i-1 \text{ and } 2\leq i\leq N-1; \\
1, &  \text{ if } j= N+i  \text{ and } 1\leq i\leq N-2; \\
1, &  \text{ if } j=i-N  \text{ and } N+1\leq i\leq 2N-2; \\
-1+1/\zbar, &  \text{ if } j=i+1 \text{ and } N\leq i\leq 2N-3; \\
0, &\text{ otherwise.}
\end{array}\right.
\end{equation}

Note that $\zbar\bm{\widetilde{M}}_0=\bm{\widetilde{M}}$ in the neutral case, i.e. when $\psi_1\equiv 0$ and $\psi_2\equiv 0$. In order to define $\bm{\widetilde{M}}_1$ we need Hadamard's Lemma.

\begin{lemma}[Hadamard's Lemma~\citep{bruce}]
Let $f:U_{z_0}\subset\rr\longrightarrow \rr$ be a smooth function, where  $U_{z_0}$ is an open neighbourhood of $z_0$ and suppose $f^{(p)}(z_0)=0$ for all $p$ with $1\leq p \leq k$. Then there exist a smooth function $\overline{f}:U_{z_0}\longrightarrow \rr$ such that \[f(z)=f(z_0)+(z-z_0)^{k+1}\overline{f}(z)\] for all $z\in U_{z_0}$. When $k=0$ there are no such $p$ and the result also holds.
\end{lemma}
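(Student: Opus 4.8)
The plan is to prove Hadamard's Lemma directly from Taylor's theorem with integral remainder, reading off an explicit formula for $\overline{f}$ and then verifying its smoothness. A preliminary reduction: I may assume without loss of generality that $U_{z_0}$ is an open interval, since otherwise I replace $U_{z_0}$ by the connected component of $z_0$ (an open interval, hence convex); convexity guarantees that for every $z\in U_{z_0}$ the whole segment $\{z_0+s(z-z_0):s\in[0,1]\}$ lies in $U_{z_0}$, so that $f^{(k+1)}$ is defined everywhere along it. First I would invoke the integral form of Taylor's theorem to order $k$ about $z_0$,
\[
f(z)=\sum_{p=0}^{k}\frac{f^{(p)}(z_0)}{p!}(z-z_0)^p+\frac{1}{k!}\int_{z_0}^{z}(z-t)^k f^{(k+1)}(t)\,dt ,
\]
and then use the hypothesis $f^{(p)}(z_0)=0$ for $1\le p\le k$ to annihilate every term in the sum with $p\ge 1$, leaving
\[
f(z)=f(z_0)+\frac{1}{k!}\int_{z_0}^{z}(z-t)^k f^{(k+1)}(t)\,dt .
\]
When $k=0$ the sum reduces to $f(z_0)$ and the remainder is simply $\int_{z_0}^{z}f'(t)\,dt$, so there is no derivative hypothesis to use and the argument below is unchanged.

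Next I would extract the factor $(z-z_0)^{k+1}$ by the substitution $t=z_0+s(z-z_0)$, under which $dt=(z-z_0)\,ds$, $z-t=(1-s)(z-z_0)$, and the limits become $s=0$ and $s=1$. This rewrites the remainder as
\[
\frac{(z-z_0)^{k+1}}{k!}\int_0^1(1-s)^k f^{(k+1)}\bigl(z_0+s(z-z_0)\bigr)\,ds ,
\]
so that upon defining
\[
\overline{f}(z):=\frac{1}{k!}\int_0^1(1-s)^k f^{(k+1)}\bigl(z_0+s(z-z_0)\bigr)\,ds
\]
I obtain exactly the claimed identity $f(z)=f(z_0)+(z-z_0)^{k+1}\overline{f}(z)$ for all $z\in U_{z_0}$.

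Finally I would verify that $\overline{f}\in C^\infty(U_{z_0})$. The integrand is jointly smooth in $(s,z)$ on $[0,1]\times U_{z_0}$ because $f$ is smooth; since the $s$-integration runs over the compact interval $[0,1]$, repeated differentiation under the integral sign is justified, and one computes
\[
\partial_z^m\overline{f}(z)=\frac{1}{k!}\int_0^1(1-s)^k s^m\, f^{(k+1+m)}\bigl(z_0+s(z-z_0)\bigr)\,ds ,
\]
which is continuous in $z$ for every $m\ge 0$. Hence all derivatives of $\overline{f}$ exist and are continuous, so $\overline{f}$ is smooth, completing the proof. The only genuinely technical point is the justification of differentiation under the integral sign, and this is routine precisely because the domain of integration is the compact set $[0,1]$ and the integrand depends smoothly on the parameter $z$; everything else is a direct consequence of Taylor's formula and a change of variables.
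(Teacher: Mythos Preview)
Your proof is correct and is the standard argument for Hadamard's Lemma via Taylor's theorem with integral remainder and the change of variables $t=z_0+s(z-z_0)$. The paper itself does not prove this lemma; it is quoted from the reference \cite{bruce} and then applied, so there is no in-paper proof to compare against.

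One small remark on your preliminary reduction: replacing $U_{z_0}$ by the connected component of $z_0$ is not literally ``without loss of generality'' for the full statement, since the lemma asserts the existence of $\overline{f}$ on all of $U_{z_0}$. However, on any other connected component one has $z\neq z_0$, so there $\overline{f}(z):=(f(z)-f(z_0))/(z-z_0)^{k+1}$ is manifestly smooth, and gluing this with your integral formula on the component of $z_0$ yields the global $\overline{f}$. This is a triviality and does not affect the substance of your argument, nor the application made in the paper (where $U_{z_0}$ is an interval around $0$).
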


Applying Hadamard's Lemma to $a_x(z)-z$, for $k=1$ and $z_0=0$, we obtain 
\[
a_x(z)-z=z^2\int_0^1\int_0^1 s a''_x(zsu) dsdu.
\] 
Similarity, we rewrite all equations in (\ref{coefDB}) as 
\begin{equation} \label{coefDB2}
\begin{array}{l}
{\displaystyle b_x(z)=1-z+z^2\int_0^1\int_0^1sb''_x(zsu)dsdu,}\\\\
{\displaystyle c_x(z)=1-z+z^2\int_0^1\int_0^1sc''_x(zsu) dsdu, }\\\\
{\displaystyle d_x(z)=z+z^2\int_0^1\int_0^1sd''_x(zsu)dsdu.}
\end{array}
\end{equation}

Now we can define $\bm{\widetilde{M}}_1$ as follows
\[\left(\widetilde{M}_1\right)_{ij}=\left\lbrace \begin{array}{ll}
{\displaystyle\int_0^1\int_0^1sc''_{i\zbar}(zsu) dsdu\Big\rvert_{z=\overline{z}}}, & \text{ if } j= i-1 \text{ and } 2\leq i\leq N-1; \\\\

{\displaystyle\int_0^1\int_0^1sd''_{i\zbar}(zsu)dsdu\Big\rvert_{z=\overline{z}}}, & \text{ if } j= N+i \text{ and } 1\leq i\leq N-2; \\\\

{\displaystyle\int_0^1\int_0^1sa''_{(i-N)\zbar}(zsu) dsdu\Big\rvert_{z=\overline{z}}}, & \text{ if } j=i-N \text{ and } N< i\leq 2N-2; \\\\

{\displaystyle\int_0^1\int_0^1sb''_{(i-N)\zbar}(zsu) dsdu\Big\rvert_{z=\overline{z}}}, & \text{ if } j=i+1 \text{ and } N\leq i\leq 2N-3; \\\\

0, &\text{otherwise.}
\end{array}\right.\]

The next lemma shows there exist an upper bound for $\bm{\widetilde{M}}_1$ that does not depend on $\overline{z}$.

\begin{lemma}\label{cotaM1}
There exist a constant $C_1$ independent of $\overline{z}$ such that $\Vert\bm{\widetilde{M}}_1\Vert\leq C_1$.
\end{lemma}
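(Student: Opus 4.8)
The plan is to show that each nonzero entry of $\bm{\widetilde{M}}_1$ is bounded, uniformly in $\overline{z}$, by a constant depending only on the fitness data, and then invoke the structure of $\bm{\widetilde{M}}_1$ — namely that it has at most two nonzero entries per row (one from an "$a$/$c$''-type term and one from a "$b$/$d$''-type term) — so that the row-sum norm is at most twice that constant. First I would note that, by definition, the entries of $\bm{\widetilde{M}}_1$ are all of the form $\int_0^1\int_0^1 s\, h_x''(\overline{z}su)\,ds\,du$ where $h\in\{a^{DB},b^{DB},c^{DB},d^{DB}\}$ and $x$ ranges over the grid points in $[0,1-\overline{z}]$. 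Since $0\le \overline{z}su\le \overline{z}\le \delta$ and $x\in[0,1]$, it suffices to bound $|h_x''(z)|$ uniformly for $(x,z)\in[0,1]\times[0,\delta]$.

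The key step is therefore the uniform bound on the second $z$-derivatives of the four coefficient functions in (\ref{coefDB}). Each $h_x(z)$ is a rational function of $z$ whose numerator and denominator are polynomials in $z$ with coefficients built from $x$, $\psi_1(x+z)$, $\psi_2(x+z)$, $\psi_1(x)$, $\psi_2(x)$ (depending on the case). Because $\psi_1,\psi_2\in C^\infty([0,1])$, they and their derivatives up to order two are bounded on the compact interval $[0,1]$; hence the numerator, the denominator, and their first two derivatives in $z$ are all bounded on $[0,1]\times[0,\delta]$. The only thing that could go wrong is a vanishing denominator, so I would check that each denominator in (\ref{coefDB}) is bounded away from zero on $[0,1]\times[0,\delta]$: for instance the denominator of $a^{DB}_x$ is $1+xz\psi_1(x+z)+z^2\psi_2(x+z)$, which tends to $1$ as $z\to 0$ and, since $\psi_1,\psi_2\ge 0$ here (the $\rho_i$ and hence $\psi_i$ are stated to be nonnegative), in fact stays $\ge 1$; the denominators of $c^{DB}_x$ and $d^{DB}_x$ are a sum of two positive terms, one of which is $z(1+z\psi_1(x))\ge z$, but since that one degenerates as $z\to0$ I would instead use that the other term, $(1+z\psi_2(x))(1-z+xz\psi_1(x))$, equals $1$ at $z=0$ and stays bounded below by a positive constant for $z$ in a sufficiently small $[0,\delta]$ (shrinking $\delta$ if necessary, which is harmless since we only ever evaluate at $z=\overline{z}\le\delta$ and may take $\overline{z}$ small, i.e. $N$ large). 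With denominators bounded below and numerators and two derivatives bounded above, the quotient rule gives a uniform bound $|h_x''(z)|\le C_0$ on $[0,1]\times[0,\delta]$, whence $\bigl|\int_0^1\int_0^1 s\,h_x''(\overline{z}su)\,ds\,du\bigr|\le \tfrac{1}{2}C_0$.

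Finally I would assemble this: inspecting the definition of $\bm{\widetilde{M}}_1$, row $i$ has a nonzero entry in column $i-1$ (a $c''$ term) only for $2\le i\le N-1$, and in column $N+i$ (a $d''$ term) only for $1\le i\le N-2$; for $N< i\le 2N-2$ it has a nonzero entry in column $i-N$ (an $a''$ term) and in column $i+1$ (a $b''$ term) for $N\le i\le 2N-3$; other rows are zero. So every row has at most two nonzero entries, each of absolute value at most $\tfrac12 C_0$, giving $\Vert\bm{\widetilde{M}}_1\Vert=\max_i\sum_j|(\widetilde{M}_1)_{ij}|\le C_0=:C_1$, independent of $\overline{z}$. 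The main obstacle is the second step — establishing that the denominators stay uniformly away from zero and that the second derivatives are genuinely bounded; this is routine but must be done with some care, in particular using the nonnegativity of $\psi_1,\psi_2$ (or shrinking $\delta$) to handle the $c^{DB}$ and $d^{DB}$ denominators, whose two summands individually degenerate at $z=0$.
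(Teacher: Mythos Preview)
Your proof is correct and follows the same approach as the paper's, which simply observes that the integrand in each entry of $\bm{\widetilde{M}}_1$ is continuous on a compact domain and hence bounded by a constant independent of $\overline{z}$. Your version is considerably more detailed --- you explicitly check that the denominators in (\ref{coefDB}) stay bounded away from zero and exploit the at-most-two-nonzero-entries-per-row structure --- whereas the paper condenses all of this into a single sentence.
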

\begin{proof}
Note that the function inside the integral of each term of matrix $\bm{\widetilde{M}}_1$ is a continuous function. Then, for variables $ u,s\in [0,1]$ and $\zbar<1$, there exist a maximum value that does not depend on $\zbar$. Therefore, each term of $\bm{\widetilde{M}}_1$ is bounded by a constant independent of $\zbar$ and so, the result follows.\end{proof}

We use the lemma bellow to find an upper bound for $\Vert\bm{\widetilde{M}}_0^{-1}\Vert$ in Proposition~\ref{normM0inv}.

\begin{lemma}[\cite{st}]\label{invnorm}
Let $\bm{K}\in\rr^{n\times n}$ be a matrix with non-positive off-diagonal elements, i.e. $K_{ij}\leq 0$ for $i\neq j$,  and suppose there exists a positive vector $\bm{r} > 0$ with $\bm{Kr} > 0$, then
\[\Vert \bm{K}^{-1}\Vert\leq\frac{\Vert \bm{r}\Vert}{\underset{i=1,...,n}{\min}(Kr)_i}.\]
\end{lemma}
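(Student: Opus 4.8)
The final statement (Lemma~\ref{invnorm}) is the standard inverse-positivity estimate for a nonsingular $M$-matrix, and the plan is to prove it from scratch by an elementary comparison argument rather than invoking Perron--Frobenius theory. The entire proof rests on a single monotonicity property: \emph{if $\bm{Kv}\geq\bm{0}$ componentwise, then $\bm{v}\geq\bm{0}$.} Granting this, both the invertibility of $\bm{K}$ and the entrywise nonnegativity of $\bm{K}^{-1}$ follow at once, and the norm bound becomes a three-line consequence. So I would first isolate and prove the monotonicity property, and then assemble the conclusion.

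To establish monotonicity I would argue by contradiction using a maximum-ratio test vector. Suppose $\bm{Kv}\geq\bm{0}$ and set $t^{*}=\max_{i}(-v_i/r_i)$, which is well defined since $\bm{r}>\bm{0}$. If $\bm{v}$ had a strictly negative entry, then $t^{*}>0$. Let $\bm{w}=t^{*}\bm{r}+\bm{v}$; by the choice of $t^{*}$ one has $\bm{w}\geq\bm{0}$, with $w_k=0$ for some index $k$ attaining the maximum. Evaluating the $k$-th component of $\bm{Kw}=t^{*}\bm{Kr}+\bm{Kv}$ in two ways yields the contradiction: on one hand $(\bm{Kw})_k\geq t^{*}(\bm{Kr})_k>0$, because $t^{*}>0$, $(\bm{Kr})_k>0$ and $(\bm{Kv})_k\geq0$; on the other hand, since $w_k=0$ the diagonal term drops out and $(\bm{Kw})_k=\sum_{j\neq k}K_{kj}w_j\leq0$, because $K_{kj}\leq0$ and $w_j\geq0$. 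Hence $\bm{v}\geq\bm{0}$. Applying this to both $\bm{v}$ and $-\bm{v}$ in the case $\bm{Kv}=\bm{0}$ forces $\bm{v}=\bm{0}$, so $\bm{K}$ is injective and therefore invertible; applying it to the $j$-th column $\bm{K}^{-1}\bm{e}_j$ of $\bm{K}^{-1}$ (with $\bm{e}_j$ the standard basis vector, so that $\bm{K}(\bm{K}^{-1}\bm{e}_j)=\bm{e}_j\geq\bm{0}$) shows that $\bm{K}^{-1}\geq\bm{0}$ entrywise.

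With $\bm{K}^{-1}\geq\bm{0}$ in hand, the norm bound is immediate. Writing $\bm{1}$ for the all-ones vector, nonnegativity gives $\Vert\bm{K}^{-1}\Vert=\max_i\sum_j(K^{-1})_{ij}=\max_i(\bm{K}^{-1}\bm{1})_i=|\bm{K}^{-1}\bm{1}|$. Now set $m=\min_{i}(\bm{Kr})_i>0$, so that $\bm{Kr}-m\bm{1}\geq\bm{0}$. Since $\bm{K}(\bm{r}-m\bm{K}^{-1}\bm{1})=\bm{Kr}-m\bm{1}\geq\bm{0}$, monotonicity applied to $\bm{r}-m\bm{K}^{-1}\bm{1}$ gives $\bm{K}^{-1}\bm{1}\leq \bm{r}/m$ componentwise, and taking the maximum entry yields $|\bm{K}^{-1}\bm{1}|\leq|\bm{r}|/m=\Vert\bm{r}\Vert/m$. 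Combining the two displays gives $\Vert\bm{K}^{-1}\Vert\leq\Vert\bm{r}\Vert/\min_i(\bm{Kr})_i$, as claimed.

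The only delicate step is the monotonicity property; the remainder is routine bookkeeping. The points that make the comparison argument close are that the strict inequality $\bm{Kr}>\bm{0}$ in the hypothesis is exactly what produces the strict lower bound $(\bm{Kw})_k>0$, while the non-positive off-diagonal sign condition $K_{kj}\leq0$ is precisely what makes the off-diagonal sum $\sum_{j\neq k}K_{kj}w_j$ non-positive at the extremal index. I expect no serious obstacle beyond stating this argument carefully, since it sidesteps spectral-radius estimates entirely.
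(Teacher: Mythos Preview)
Your proof is correct. However, the paper does not give its own proof of this lemma: it is quoted as a result of Stoyan and Tak\'o~\cite{st} and used as a black box in the proof of Proposition~\ref{normM0inv}. So there is no in-paper argument to compare against.

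That said, your self-contained treatment is entirely in the spirit of the standard $M$-matrix argument. The key monotonicity step (if $\bm{Kv}\geq\bm{0}$ then $\bm{v}\geq\bm{0}$) via the maximum-ratio index $k$ is the classical comparison principle for $Z$-matrices admitting a strictly positive test vector, and you have stated it cleanly: the strict hypothesis $\bm{Kr}>\bm{0}$ is used exactly once, to force $(\bm{Kw})_k>0$ and close the contradiction. The passage from inverse-positivity to the norm bound via $\bm{K}^{-1}\bm{1}\leq \bm{r}/m$ is also the standard route. Nothing is missing; the only cosmetic point is that in the paper's conventions $\Vert\cdot\Vert$ on vectors already denotes the $\ell^\infty$ norm (since vectors are treated as $n\times1$ matrices and $\Vert\bm{M}\Vert$ is the maximum absolute row sum), so your identification $\Vert\bm{r}\Vert=|\bm{r}|=\max_i r_i$ is consistent with the ambient notation.
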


\begin{proposition}\label{normM0inv}
Let $\bm{\widetilde{M}}_0$ be the matrix defined in (\ref{M0}). Then $\Vert\bm{\widetilde{M}}_0^{-1}\Vert\leq 1$.
\end{proposition}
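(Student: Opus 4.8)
The plan is to apply Lemma~\ref{invnorm} (Stoyan--Tako) directly to the matrix $\bm{K}=-\bm{\widetilde{M}}_0$. First I would check that $-\bm{\widetilde{M}}_0$ has non-positive off-diagonal entries: inspecting (\ref{M0}), the diagonal entries of $-\bm{\widetilde{M}}_0$ equal $1/\zbar>0$, while every off-diagonal entry of $\bm{\widetilde{M}}_0$ is one of $-1+1/\zbar$ (which is $\geq 0$ since $\zbar<1$) or $1$ or $0$; hence the off-diagonal entries of $-\bm{\widetilde{M}}_0$ are $1-1/\zbar\leq 0$, $-1<0$, or $0$, all non-positive, as required. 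So the sign hypothesis of Lemma~\ref{invnorm} holds.

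Next I would exhibit an explicit positive vector $\bm{r}>0$ with $-\bm{\widetilde{M}}_0\bm{r}>0$, chosen so that $\Vert\bm{r}\Vert/\min_i(-\bm{\widetilde{M}}_0\bm{r})_i\leq 1$, i.e. so that $(-\bm{\widetilde{M}}_0\bm{r})_i\geq \Vert\bm{r}\Vert=\max_j r_j$ for every $i$. The natural first guess is the constant vector $\bm{r}=\bm{1}$; then $(-\bm{\widetilde{M}}_0\bm{1})_i$ is just the $i$-th row sum of $-\bm{\widetilde{M}}_0$, which one reads off from (\ref{M0}). For the two ``center'' rows ($i=1$ and $i=N$, using the block structure), the row of $\bm{\widetilde{M}}_0$ has entries $-1/\zbar$ on the diagonal and $+1$ in the coupling position, giving row sum $-1/\zbar+1$, hence $(-\bm{\widetilde{M}}_0\bm{1})_i=1/\zbar-1\geq 1$ once $\zbar\leq 1/2$; similarly for a generic ``leaf'' row with both a $-1+1/\zbar$ neighbour term and a $+1$ coupling term the row sum of $\bm{\widetilde{M}}_0$ is $-1/\zbar+(-1+1/\zbar)+1=0$, which is a problem: there $(-\bm{\widetilde{M}}_0\bm{1})_i=0$, not $>0$. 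So the constant vector is not quite good enough, and one must tilt it slightly — e.g. take $r_i$ to be a suitable affine or geometric profile along the chain of leaf vertices (decreasing in the direction of the $-1+1/\zbar$ off-diagonal coupling, which is exactly the ``transport'' structure of the neutral random walk), with the two center components adjusted to dominate. With such a perturbed $\bm{r}$, bounded between two positive constants independent of $\zbar$, each $(-\bm{\widetilde{M}}_0\bm{r})_i$ picks up a strictly positive contribution, and one checks the ratio is $\leq 1$.

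The main obstacle is the construction of this vector $\bm{r}$: one must handle the boundary rows (indices $1,N-1,N,2N-2$ in $\bm{\widetilde{L}}$), where one of the neighbouring off-diagonal terms is missing because of the truncation to the interior $2N-2\times 2N-2$ block, and verify that the inequality $(-\bm{\widetilde{M}}_0\bm{r})_i\geq\max_j r_j$ survives uniformly in $N$ at those rows; the interior rows are routine once the profile is chosen. An alternative, possibly cleaner, route is to recognize $-\zbar\bm{\widetilde{M}}_0$ as (minus) the generator of the conditioned neutral Moran chain on the star restricted to transient states, so that $(-\zbar\bm{\widetilde{M}}_0)^{-1}$ is its Green's function / expected-absorption-time operator; its row sums $\Vert(-\zbar\bm{\widetilde{M}}_0)^{-1}\bm{1}\Vert_\infty$ are then the maximal expected number of jumps to absorption, which for the neutral process one can bound by an $O(1)$ constant, giving $\Vert\bm{\widetilde{M}}_0^{-1}\Vert=\zbar^{-1}\Vert(\zbar\bm{\widetilde{M}}_0)^{-1}\Vert\leq 1$ directly. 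I would present the Stoyan--Tako argument as the primary proof since the lemma is already set up in the excerpt, and only invoke the probabilistic picture as motivation for the choice of $\bm{r}$.
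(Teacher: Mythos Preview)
Your overall strategy---apply Lemma~\ref{invnorm} to $\bm{K}=-\bm{\widetilde{M}}_0$, check the sign condition on off-diagonals, then exhibit a positive vector $\bm{r}$---is exactly what the paper does, and your sign verification is correct. You also correctly diagnose why $\bm{r}=\bm{1}$ fails: the generic interior row sum of $\bm{\widetilde{M}}_0$ vanishes.

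The genuine gap is the construction of $\bm{r}$. Your suggestions (``affine or geometric profile'', ``bounded between two positive constants independent of $\zbar$'') do not lead to the bound $\leq 1$. The paper's choice is a \emph{quadratic} profile:
\[
r_j=\zbar\Bigl(N-1-\tfrac{(N-1-j)^2}{N-1}\Bigr),\quad 1\leq j\leq N-1;\qquad
r_j=\zbar\Bigl(N-1-\tfrac{(j-N)^2}{N-1}\Bigr),\quad N\leq j\leq 2N-2.
\]
This is \emph{not} bounded away from zero uniformly in $N$: the extreme entries are $O(\zbar)$ while the peak is $\zbar(N-1)=(N-1)/N$. The point of the parabola is that its first differences are linear, and together with the cross-block coupling this produces $(-\bm{\widetilde{M}}_0\bm{r})_i=1$ \emph{exactly} for every $i$ (including the boundary rows $i=1,N-1,N,2N-2$). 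Then $\Vert\bm{r}\Vert=\zbar(N-1)<1$ and $\min_i(-\bm{\widetilde{M}}_0\bm{r})_i=1$, so Lemma~\ref{invnorm} gives $\Vert\bm{\widetilde{M}}_0^{-1}\Vert\leq\zbar(N-1)<1$. An affine profile would make the first difference constant and would not cancel the $i$-dependent coupling term $r_{i-1}-r_{N+i}$; a geometric profile would be even harder to balance.

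Your probabilistic alternative also needs repair: you write $\Vert\bm{\widetilde{M}}_0^{-1}\Vert=\zbar^{-1}\Vert(\zbar\bm{\widetilde{M}}_0)^{-1}\Vert$, but in fact $\bm{\widetilde{M}}_0^{-1}=\zbar(\zbar\bm{\widetilde{M}}_0)^{-1}$, so the factor is $\zbar$, not $\zbar^{-1}$. Moreover the expected number of jumps to absorption for the neutral chain on the star is $O(N)$, not $O(1)$; combined with the correct factor $\zbar=1/N$ this does give $O(1)$, but you would still need to prove the sharp constant $1$ (i.e.\ expected absorption time $\leq N-1$) to match the statement. The explicit quadratic $\bm{r}$ is simpler and is what the paper uses.
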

\begin{proof}
First note that each off-diagonal element of the matrix $-\bm{\widetilde{M}}_0$ is non-positive. Now, consider the vector $\bm{r}$ such that $r_j=\overline{z}(N-1-(N-1-j)^2/(N-1)) $ for $j=1,...,N-1$ and $r_j=\overline{z}(N-1-(j-N)^2/(N-1)) $ for $j=N,...,2N-2$. Then  $(-\widetilde{M}_0r)_i=1$ for $i=1,...,2N-2$. Also, $\Vert \bm{r}\Vert=\overline{z}(N-1)$. Therefore, from Lemma~\ref{invnorm},  $\Vert\bm{\widetilde{M}}_0^{-1}\Vert\leq 1$. \end{proof}

We are now ready to find upper bounds for $\Vert\bm{\widetilde{M}}^{-1}\Vert$ and $\Vert \bm{\beta}+\bm{\widetilde{M}\widetilde{\overline{F}}}\Vert$ in the following propositions.

\begin{proposition}\label{cotaM}
There exists a constant $C$, independent of $\overline{z}$, for which
$$
\Vert\bm{\widetilde{M}}^{-1}\Vert\leq  C/\overline{z}.
$$
\end{proposition}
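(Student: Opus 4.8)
## Proof plan for Proposition~\ref{cotaM}

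\textbf{The strategy} is to leverage the splitting $\bm{\widetilde{M}}=\zbar\bm{\widetilde{M}}_0+\zbar^2\bm{\widetilde{M}}_1$ together with the two estimates already in hand: $\Vert\bm{\widetilde{M}}_0^{-1}\Vert\leq 1$ (Proposition~\ref{normM0inv}) and $\Vert\bm{\widetilde{M}}_1\Vert\leq C_1$ (Lemma~\ref{cotaM1}). Writing $\bm{\widetilde{M}}=\zbar\bm{\widetilde{M}}_0(\bm{I}+\zbar\bm{\widetilde{M}}_0^{-1}\bm{\widetilde{M}}_1)$, I would like to invert via the Neumann series, so the first thing to check is that $\zbar\Vert\bm{\widetilde{M}}_0^{-1}\bm{\widetilde{M}}_1\Vert\leq\zbar C_1<1$; this holds as soon as $\zbar<1/C_1$, i.e. for all $N$ larger than some $N_0$ depending only on $C_1$ (and hence only on the fixed smooth data $\psi_1,\psi_2$). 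For such $N$, $\bm{I}+\zbar\bm{\widetilde{M}}_0^{-1}\bm{\widetilde{M}}_1$ is invertible with $\Vert(\bm{I}+\zbar\bm{\widetilde{M}}_0^{-1}\bm{\widetilde{M}}_1)^{-1}\Vert\leq 1/(1-\zbar C_1)\leq 1/(1-\zbar_0 C_1)$, a constant independent of $\zbar$.

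\textbf{Assembling the bound:} from $\bm{\widetilde{M}}^{-1}=\tfrac{1}{\zbar}(\bm{I}+\zbar\bm{\widetilde{M}}_0^{-1}\bm{\widetilde{M}}_1)^{-1}\bm{\widetilde{M}}_0^{-1}$ and submultiplicativity of $\Vert\cdot\Vert$, I get
\[
\Vert\bm{\widetilde{M}}^{-1}\Vert\leq\frac{1}{\zbar}\cdot\frac{1}{1-\zbar C_1}\cdot\Vert\bm{\widetilde{M}}_0^{-1}\Vert\leq\frac{1}{\zbar}\cdot\frac{1}{1-\zbar C_1}\leq\frac{C}{\zbar},
\]
with, say, $C=2$ once $\zbar\leq 1/(2C_1)$. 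For the finitely many remaining small values of $N$ (equivalently, $\zbar$ bounded away from $0$), $\bm{\widetilde{M}}$ is a fixed invertible matrix (invertibility follows from Proposition~\ref{fixation}, which guarantees $\bm{\widetilde{M}}$ is nonsingular for every $N$), so $\zbar\Vert\bm{\widetilde{M}}^{-1}\Vert$ is a finite quantity; enlarging $C$ to absorb the maximum over this finite set gives the stated bound uniformly in $N$.

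\textbf{The main obstacle} is really just making sure the threshold $N_0$ and the constant $C_1$ depend only on $\psi_1,\psi_2$ and not on $N$ — but that is exactly the content of Lemma~\ref{cotaM1}, whose proof already notes that the integrands $a_x'',b_x'',c_x'',d_x''$ are continuous in all their arguments on a compact domain with bounds independent of $\zbar$. One subtlety worth a sentence in the writeup is that $\bm{\widetilde{M}}_0$, $\bm{\widetilde{M}}_1$, and the identity all have dimension $2N-2$ that grows with $N$, so I must be careful that every estimate used ($\Vert\bm{\widetilde{M}}_0^{-1}\Vert\leq 1$, $\Vert\bm{\widetilde{M}}_1\Vert\leq C_1$, $\Vert(\bm{I}+\cdots)^{-1}\Vert\leq 1/(1-\zbar C_1)$) is genuinely dimension-free in the chosen matrix norm — which the maximum-absolute-row-sum norm makes clean, since the row-sum bounds from Lemma~\ref{cotaM1} and Proposition~\ref{normM0inv} hold row-by-row regardless of how many rows there are. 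With those pieces in place the argument is short.
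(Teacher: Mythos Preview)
Your proposal is correct and follows essentially the same approach as the paper: factor $\bm{\widetilde{M}}=\zbar\bm{\widetilde{M}}_0(\bm{I}+\zbar\bm{\widetilde{M}}_0^{-1}\bm{\widetilde{M}}_1)$, use Proposition~\ref{normM0inv} and Lemma~\ref{cotaM1} to make $\Vert\zbar\bm{\widetilde{M}}_0^{-1}\bm{\widetilde{M}}_1\Vert<1$ for small $\zbar$, and invert via the Neumann series. Your extra care about the finitely many small $N$ and about the dimension-independence of the row-sum norm estimates are welcome clarifications, but they do not constitute a different method.
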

\begin{proof}
Note that $\bm{\widetilde{M}}^{-1}=\zbar^{-1}(\bm{I}+\zbar\bm{\widetilde{M}}_0^{-1}\bm{\widetilde{M}}_1)^{-1}\bm{\widetilde{M}}_0^{-1}$. Using Proposition~\ref{normM0inv} and Lemma~\ref{cotaM1}, and for a sufficiently small $\overline{z}$ (big $N$), we have $$\Vert \zbar\bm{\widetilde{M}}_0^{-1}\bm{\widetilde{M}}_1\Vert\leq \zbar\Vert\bm{\widetilde{M}}_1\Vert \leq C_2<1.$$ Therefore, from a property of \emph{Neumann series}, $$(I+\zbar\bm{\widetilde{M}}_0^{-1}\bm{\widetilde{M}}_1)^{-1}=\sum_{i=0}^\infty(-\zbar\bm{\widetilde{M}}_0^{-1}\bm{\widetilde{M}}_1)^i.$$ Thus, 
\begin{eqnarray*}
\Vert\bm{\widetilde{M}}^{-1}\Vert&\leq& \zbar^{-1}\Vert(\bm{\widetilde{M}}_0+\zbar\bm{\widetilde{M}}_1)^{-1} \Vert \\
&\leq& \zbar^{-1}\sum\limits_{i=0}^\infty\Vert(\zbar\bm{\widetilde{M}}_0^{-1}\bm{\widetilde{M}}_1)^i\Vert\\
& =&\zbar^{-1}\frac{1}{1-C_2}<\zbar^{-1}C.
\end{eqnarray*}
\end{proof}

\begin{proposition}\label{MF}
With the notation as in (\ref{erro}), there exists a constant $C_2$ that does not depend on $\overline{z}$, such that
\[\Vert \bm{\beta}+\bm{\widetilde{M}\widetilde{\overline{F}}}\Vert\leq \zbar^2C_2.\]
\end{proposition}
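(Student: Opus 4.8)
The plan is to bound $\Vert \bm{\beta}+\bm{\widetilde{M}\widetilde{\overline{F}}}\Vert$ entrywise by a quantity of order $\zbar^2$ and then use the definition of the matrix infinity norm as the maximum absolute row sum, noting that each row of $\bm{\widetilde{M}}$ has at most two nonzero off-diagonal entries plus the diagonal, so an $O(\zbar^2)$ entrywise bound yields an $O(\zbar^2)$ bound on the norm (up to an absolute constant from the fixed number of nonzero terms per row). Concretely, write $\bm{\widetilde{M}}=\zbar\bm{\widetilde{M}}_0+\zbar^2\bm{\widetilde{M}}_1$ as in the decomposition preceding Lemma~\ref{cotaM1}, and split
\[
\bm{\beta}+\bm{\widetilde{M}\widetilde{\overline{F}}}
=\bigl(\bm{\beta}+\zbar\bm{\widetilde{M}}_0\bm{\widetilde{\overline{F}}}\bigr)+\zbar^2\bm{\widetilde{M}}_1\bm{\widetilde{\overline{F}}}.
\]
Since $\Vert\bm{\widetilde{M}}_1\Vert\le C_1$ by Lemma~\ref{cotaM1} and $\bm{\widetilde{\overline{F}}}$ has entries bounded by an absolute constant (being $f_j+\zbar g_j$ evaluated on the grid, with $f_j,g_j$ continuous on a compact interval), the second term is manifestly $O(\zbar^2)$. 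Hence everything reduces to showing
\[
\Vert\bm{\beta}+\zbar\bm{\widetilde{M}}_0\bm{\widetilde{\overline{F}}}\Vert\le C\,\zbar^2.
\]

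For this term I would compute the $i$-th component of $\zbar\bm{\widetilde{M}}_0\bm{\widetilde{\overline{F}}}$ explicitly from the definition of $\bm{\widetilde{M}}_0$ in~(\ref{M0}). For a generic interior row — say $2\le i\le N-1$, corresponding to the ``type-$2$'' block — the entry is
\[
\zbar\Bigl[(-1+1/\zbar)\widetilde{\overline{F}}_{i-1}-(1/\zbar)\widetilde{\overline{F}}_i+\widetilde{\overline{F}}_{N+i}\Bigr],
\]
and on unwinding the index shifts via $\widetilde{\overline{F}}_{j-1}=\overline{F}_j=f_2((j-1)\zbar)+\zbar g_2((j-1)\zbar)$ (and similarly with $f_1,g_1$ on the upper block), this becomes $\zbar$ times the left-hand side of the discrete analogue of the defining relations~(\ref{recureq}) evaluated at $z=\zbar$, with $a_x,b_x,c_x,d_x$ replaced by their leading approximations $c_x(\zbar)\approx 1-\zbar$, $d_x(\zbar)\approx\zbar$, etc., from~(\ref{coefDB2}). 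The functions $f_1,f_2,g_1,g_2$ were constructed in Section~\ref{odeDB} precisely so that the coefficients of $z^0$ and $z^1$ in the Taylor expansion of the recursion vanish identically; therefore the residual $c_x(\zbar)\widetilde{\overline{F}}_{i-1}+d_x(\zbar)\widetilde{\overline{F}}_{N+i}-\widetilde{\overline{F}}_i$, and its counterpart from the other block, is $O(\zbar^2)$ pointwise, uniformly in the grid point, because the $O(\zbar^2)$ error terms in~(\ref{coefDB2}) are uniformly bounded (by continuity on a compact set, as in the proof of Lemma~\ref{cotaM1}) and because $f_1,f_2,g_1,g_2\in C^1$ so the Taylor remainders in replacing $f_i(x+\zbar)$ by $f_i(x)+\zbar f_i'(x)$ contribute only at order $\zbar^2$. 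The boundary rows $i=1$ and $i=2N-2$ carry the $\bm{\beta}$ contribution (with $\beta_{N-1}=d_{(N-1)\zbar}$, $\beta_{2N-2}=b_{(N-2)\zbar}$) together with a missing neighbour term; here one checks directly that $\beta$ was defined to supply exactly the term that closes the same discrete recursion at the endpoint, using $p^N_{2,0}=0$ and $p^N_{1,1-\zbar}=1$, so the endpoint residuals are also $O(\zbar^2)$. Summing the at most two nonzero entries per row then gives the claimed $\Vert\bm{\beta}+\zbar\bm{\widetilde{M}}_0\bm{\widetilde{\overline{F}}}\Vert\le C\zbar^2$.

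The main obstacle is the bookkeeping in the second paragraph: one must carefully match the index shifts in $\bm{\widetilde{M}}_0$ and $\bm{\beta}$ against the arguments $x$, $x+\zbar$, $x-\zbar$ appearing in~(\ref{recureq}), verify that the $z^0$ and $z^1$ coefficients really do cancel (this is where the construction of $f_1,f_2,g_1,g_2$ and their boundary conditions $f_1(1-\zbar)=1$, $f_2(0)=0$, $g_1(1-\zbar)=g_2(0)=0$ gets used, including the fact that the boundary conditions were imposed at $1-\zbar$ rather than $1$ precisely so the discrete endpoint equations close), and then control all Taylor remainders uniformly — both the remainder $z^2\int\!\!\int s\,(\cdot)''$ from~(\ref{coefDB2}) and the remainders from expanding $f_i,g_i$ about the grid point — by a single constant independent of $\zbar$. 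Once the residual vector is shown to be $O(\zbar^2)$ componentwise with uniformly bounded constants, combining with the trivial $O(\zbar^2)$ bound on $\zbar^2\bm{\widetilde{M}}_1\bm{\widetilde{\overline{F}}}$ and invoking the row-sum definition of $\Vert\cdot\Vert$ finishes the proof; note also that this proposition together with Proposition~\ref{cotaM} and~(\ref{erro}) immediately yields $\Vert\bm{\widetilde{F}}-\bm{\widetilde{\overline{F}}}\Vert\le (C/\zbar)\cdot C_2\zbar^2=O(\zbar)$, which is Theorem~\ref{rDB}.
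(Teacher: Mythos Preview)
Your approach is correct and matches the paper's: both check entrywise that substituting $f_j+\zbar g_j$ into the recursion leaves an $O(\zbar^2)$ residual, using the explicit linear form of $f_1,f_2$, the $C^1$-smoothness of $g_1,g_2$ (the second ODE for $g_1,g_2$ is not actually invoked, only their smoothness and boundary values), and the boundary conditions $f_2(0)=g_2(0)=0$, $f_1(1-\zbar)=1$, $g_1(1-\zbar)=0$ at the four end rows $i=1,\,N-1,\,N,\,2N-2$. Your preliminary split $\widetilde{M}=\zbar\widetilde{M}_0+\zbar^2\widetilde{M}_1$ via Lemma~\ref{cotaM1} is a mild organizational variant of the paper's direct expansion $c_x(\zbar)=1-\zbar+\zbar^2\gamma_{c_x}$, etc.; note two small slips --- the interior range in the type-$2$ block is $2\le i\le N-2$ (row $N-1$ is already a boundary row carrying $\beta_{N-1}$), and the displayed entry is the recursion residual itself, not $\zbar$ times it.
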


\begin{proof}
In order to simplify computations, we write $a_x(z)=z+z^2\gamma_{a_x}$, $b_x(z)=1-z+z^2\gamma_{b_x}$, $c_x(z)=1-z+z^2\gamma_{c_x}$ and $d_x(z)=z+z^2\gamma_{d_x}$, where $\gamma_{a_x},\gamma_{b_x},\gamma_{c_x}$ and $\gamma_{d_x}$ can be deduced from equations in (\ref{coefDB2}). Also we remind the reader that $f_1(x)=(x+1)/(2-\zbar)$, $f_2(x)=x/(2-\zbar)$, and $g_1(x)$ and $g_2(x)$ are defined in Subsection~\ref{odeDB}. For $2\leq i \leq N-2$ and $x=i\zbar$, $(\beta+\widetilde{M}\widetilde{\overline{F}})_i$ is equal to
\[
\begin{array}{l}
{\displaystyle c_x(\zbar)(f_2(x-\zbar)+\zbar g_2(x-\zbar))-(f_2(x)+\zbar g_2(x))+d_x(\zbar)(f_1(x)+\zbar g_1(x))}\\
{\displaystyle =(1-\zbar+\zbar^2\gamma_{c_x})\left(\frac{x-\zbar}{2-\zbar}+\zbar g_2(x-\zbar)\right)-\left(\frac{x}{2-\zbar}+\zbar g_2(x)\right)}\\
{\displaystyle \ \ \ +(\zbar+\zbar^2\gamma_{d_x})\left(\frac{x+1}{2-\zbar}+\zbar g_1(x)\right)}\\ 
{\displaystyle =\zbar(g_2(x-\zbar)-g_2(x))+\zbar^2\left(\frac{1}{2-\zbar}-g_2(x-\zbar)+\frac{x}{2-\zbar}\gamma_{c_x}+g_1(x)+\frac{x+1}{2-\zbar}\gamma_{d_x}\right)}\\
{\displaystyle \ \ \ +\zbar^3\left(g_2(x-\zbar)-\frac{\gamma_{c_x}}{2-\zbar}+\gamma_{d_x}g_1(x)\right)\leq \zbar(g_2(x-\zbar)-g_2(x))+\zbar^2\overline{C}_1}
\end{array} 
 \] 
 where $\overline{C}_1$ is a constant that is independent of $\zbar$. It is not hard to see that $g_2(x-\zbar)-g_2(x)=\zbar \overline{g}(x)$, where $\overline{g}(x)$ is a continuous function and so $g_2(x-\zbar)-g_2(x)\leq\zbar \overline{C}_2$, for $x\in [0,1]$.
Also,
\[
(\beta+\widetilde{M}\widetilde{\overline{F}})_1=-(f_2(\zbar)+\zbar g_2(\zbar))+d_{\zbar}(\zbar)(f_1(\zbar)+\zbar g_1(\zbar))\leq -\zbar g_2(z)+\zbar^2\overline{C}_3,
\] 
and we can deduce that $g_2(z)\leq \zbar \overline{C}_4$. Finally, $(\beta+\widetilde{M}\widetilde{\overline{F}})_{N-1}$ is equal to
\begin{multline*}
 c_{1-\zbar}(\zbar)(f_2(1-2\zbar)+\zbar g_2(1-2\zbar))-(f_2(1-\zbar)+\zbar g_2(1-\zbar))+d_{1-\zbar}(\zbar)\\
 \leq \zbar( \zbar+g_2(1-2\zbar)-g_2(1-z))+\zbar^2\overline{C}_5.
 \end{multline*} 

Therefore, $(\beta+\widetilde{M}\widetilde{\overline{F}})_i\leq\zbar^2\overline{C}_6$, for $1\leq i \leq N-1$. Similarly, for each $N\leq i\leq 2N-2$, we have $(\beta+\widetilde{M}\widetilde{\overline{F}})_i \leq\zbar^2\overline{C}$. 
Thus, the result follows.\end{proof}

\begin{proof}[Proof of Theorem~\ref{rDB}]
Using Propositions~\ref{cotaM} and ~\ref{MF} in (\ref{erro}), and recalling that $\overline{F}_1=0$ and $\overline{F}_{2N}=1$, we conclude that the approximation of the fixation probability vector given by $\bm{\overline{F}}$ is of order $\zbar$ and so Theorem~\ref{rDB} holds.
\end{proof}

\section{Proof of Proposition~\ref{fixation-Complete}}\label{AppendixB}

Let \( Q_r^{(N)} = \frac{q_r}{p_r} \) and \( P_j^{(N)} = P_j = \prod_{r=1}^j Q_r \) for \( j = 1, \dots, N \). We have
\[
Q_r = 1 + \frac{1}{N} \chi\left(\frac{r}{N}\right) + O(N^{-2}).
\]

Then,
\begin{align}\label{AppB-1}
\log P_j &= \sum_{r=1}^j \log Q_r \nonumber\\
&= \sum_{r=1}^j \log \left(1 + \frac{1}{N} \chi\left(\frac{r}{N}\right) + O(N^{-2}) \right) \nonumber \\
&= \sum_{r=1}^j \left( \frac{1}{N} \chi\left(\frac{r}{N}\right) + O(N^{-2}) \right) \nonumber \\
&= \frac{1}{N} \sum_{r=1}^j \chi\left(\frac{r}{N}\right) + O(N^{-1}). 
\end{align}

Hence,
\begin{align*}
P_j &= \exp\left( \frac{1}{N} \sum_{r=1}^j \chi\left(\frac{r}{N}\right) \right) \left(1 + O(N^{-1}) \right) \\
&= \exp\left( \frac{1}{N} \sum_{r=1}^j \chi\left(\frac{r}{N}\right) \right) + O(N^{-1}).
\end{align*}

Therefore,
\[
J_{i,N} = \frac{1}{N} \sum_{j=1}^{i-1} \exp\left( \frac{1}{N} \sum_{r=1}^j \chi\left(\frac{r}{N}\right) \right) + O(N^{-1}).
\]

From (\ref{AppB-1}), 
\begin{align*}
\log P_j - \int_0^{\frac{j}{N}} \chi(s)\, ds 
=& \left( \log P_j - \frac{1}{N} \sum_{r=1}^j \chi\left(\frac{r}{N}\right) \right)\\
&+ \left( \frac{1}{N} \sum_{r=1}^j \chi\left(\frac{r}{N}\right) - \int_0^{\frac{j}{N}} \chi(s)\, ds \right)= O(N^{-1}).
\end{align*}

Thus, recalling the definition of \(\xi\) before Proposition~\ref{fixation-Complete},
\begin{equation}\label{AppB-2}
    P_j = \xi\left( \frac{j}{N} \right) \exp\left(O(N^{-1})\right) 
= \xi\left( \frac{j}{N} \right)\left(1 + O(N^{-1})\right).
\end{equation}
Using the bound
\begin{equation}\label{AppB-3}
  \Bigg\lvert \frac{1}{N} \sum_{j=1}^{i-1} \xi\left(\frac{j}{N}\right) - \mathcal{J}\left(\frac{i}{N}\right) \Bigg\rvert
\leq \frac{\|\xi'\|_\infty}{2N} = O(N^{-1}),
\end{equation}
where $\|\xi'\|_\infty = \sup_{s \in [0,1]} \lvert \xi'(s)\rvert$ is the sup-norm of the derivative of $\xi$. Also,
\begin{equation}\label{AppB-4}
    \Bigg\lvert J_{i,N} - \frac{1}{N} \sum_{j=1}^{i-1} \xi\left( \frac{j}{N} \right) \Bigg\rvert
\leq \frac{1}{N} \sum_{j=1}^{i-1} \lvert P_j - \xi\left( \frac{j}{N} \right) \rvert = O(N^{-1}).
\end{equation}
It follows from (\ref{AppB-3}) and (\ref{AppB-4}) that
\[
J_{i,N} = \mathcal{J}\left( \frac{i}{N} \right) + O(N^{-1}), \quad
J_{N,N} = \mathcal{J}(1) + O(N^{-1}).
\]

But we have, for \(i = 1, \dots, N\),
\begin{align*}
\tilde h_N(i) - \frac{\mathcal{J}(\frac{i}{N})}{\mathcal{J}(1)} 
&= \frac{1 + N \mathcal{J}(\frac{i}{N}) + O(1)}{1 + N \mathcal{J}(1) + O(1)} 
- \frac{\mathcal{J}(\frac{i}{N})}{\mathcal{J}(1)} \\
&= \frac{\mathcal{J}(1) - \mathcal{J}(\frac{i}{N}) + O(1)}{N (\mathcal{J}(1))^2 + O(1)} 
= O(N^{-1}). \qed
\end{align*}\\

\end{appendices}

\bibliographystyle{abbrvnat}
\bibliography{bibliography}

\end{document}